\documentclass[11pt,english,a4paper,reqno]{amsart}
\usepackage{amsmath,amssymb,amsfonts,amstext,amsthm}

\usepackage{babel}
\usepackage{latexsym}
\usepackage{ifthen}
\usepackage{xypic}

\usepackage[T1]{fontenc}
\usepackage{mathpazo}
\usepackage{hyperref}
%
\newtheorem{theorem}{Theorem}[section]
\newtheorem{lemma}[theorem]{Lemma}
\newtheorem{proposition}[theorem]{Proposition}
\newtheorem{remark}[theorem]{Remark}
\newtheorem{corollary}[theorem]{Corollary}
\newtheorem{claim}[theorem]{Claim}
\newtheorem{sublemma}[theorem]{Sub-Lemma}
\newtheorem{subcorollary}[theorem]{Sub-Corollary}
\theoremstyle{definition}
\newtheorem{example}[theorem]{Example}

\newcounter{subthm}[theorem]

\newcommand{\id}{{\rm id}}

\DeclareMathOperator{\coker}{coker}

\newcommand{\KC}{{\mathbb C}}

\newcommand\sE{{\mathcal E}}

\newcommand\sF{{\mathcal F}}

\newcommand\sH{{\mathcal H}}
\newcommand\sI{{\mathcal I}}

\newcommand\sL{{\mathcal L}}
\newcommand\sO{{\mathcal O}}

\newcommand\sS{{\mathcal S}}

\newcommand\sV{{\mathcal V}}
\newcommand\sX{{\mathcal X}}
\newcommand\sY{{\mathcal Y}}
\newcommand\sC{{\mathcal C}}

\newcommand\sA{{\mathcal A}}

\newcommand\bZ{{\mathbb Z}}
\newcommand\bC{{\mathbb C}}
\newcommand\bQ{{\mathbb Q}}

\newcommand\bF{{\mathbb F}}

\newcommand\bP{{\mathbb P}}


%

\setlength{\parindent}{0ex}
\setlength{\parskip}{\medskipamount}
%

%


\begin{document}
\title{Contact K\"ahler Manifolds: Symmetries and Deformations}
\author[T. Peternell]{Thomas Peternell}
\address[Thomas Peternell]{Mathematisches Institut \\ Universit\"at Bayreuth \\ 95440 Bayreuth \\ Germany}
\email{thomas.peternell@uni-bayreuth.de}
\author[F. Schrack]{Florian Schrack}
\address[Florian Schrack]{Mathematisches Institut \\ Universit\"at Bayreuth \\ 95440 Bayreuth \\ Germany}
\email{florian.schrack@uni-bayreuth.de}
\begin{abstract}
We study complex compact K\"ahler manifolds $X$ carrying a
contact structure.
If $X$ is almost homogeneous and $b_2(X) \geq 2$, then $X$
is a projectivised tangent bundle (this was known in the projective
case even without assumption on the existence of vector fields).
We further show that a global projective deformation of the
projectivised tangent
bundle over a projective space is again of this type unless
it is the projectivisation of a special unstable bundle over a
projective space. Examples for these bundles are given in
any dimension.
\end{abstract}
\date{October 5, 2012}
\maketitle
\section{Introduction}

A contact structure on a complex manifold $X$ is in some sense the opposite of a foliation:
there is a vector bundle sequence
\[
0 \to F \to T_X \to L \to 0,
\]
where $T_X$ is the tangent bundle and $L$ a line bundle, with the additional property that the induced map
\[
\bigwedge^2F \to L, \ v \wedge w \mapsto [v,w]/F
\]
is everywhere non-degenerate. \\
Suppose now that $X$ is compact and K\"ahler or projective. 
If $b_2(X) = 1,$ then at least conjecturally the structure is well-understood: $X$ should arise as minimal orbit in the projectivised Lie algebra of contact automorphisms. 
Beauville \cite{Be98} proved this conjecture under the additional assumption that the group of contact automorphisms is reductive
and that the contact line bundle $L$ has ``enough'' sections. \\
If $b_2(X) \geq 2$ and $X$ is projective, then, due to \cite{KPSW00} and \cite{De02}, $X$ is a projectivized tangent bundle $\bP(T_Y)$ (in the sense of Grothendieck, taking hyperplanes)
over a projective manifold $Y$ (and conversely every such projectivised tangent bundle carries a contact structure). 
If $X$ is only K\"ahler, the analogous conclusion is unknown. By \cite{De02}, the canonical bundle $K_X$ is still not pseudo-effective in the K\"ahler setting, but---unlike in the projective case---it is 
not known whether this implies uniruledness of $X.$ \\
If however $X$ has enough symmetries, then we are able to deal with this situation:
\begin{theorem} Let $X$ be an almost homogeneous compact K\"ahler manifold carrying a contact structure. 
If $b_2(X) \geq 2,$ then there is a compact K\"ahler manifold $Y$ such that $X \simeq \bP(T_Y).$ 
\end{theorem}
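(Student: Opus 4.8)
The plan is to reduce everything to the projective case treated in \cite{KPSW00} and \cite{De02}: as soon as we know that $X$ is projective, those results yield a projective manifold $Y$ with $X\simeq\bP(T_Y)$, and such a $Y$ is in particular compact K\"ahler, which is the assertion. So the entire problem is to prove that an almost homogeneous compact K\"ahler contact manifold with $b_2(X)\ge 2$ is projective. Before that I would record the contact normal form: writing $\dim X=2n+1$, the non-degeneracy of $\bigwedge^2 F\to L$ forces $\det F=L^{\otimes n}$, hence $-K_X=\det T_X=\det F\otimes L=(n+1)L$; equivalently the $L$-valued form $\theta\in H^0(X,\Omega^1_X\otimes L)$ given by $T_X\to L$ satisfies $\theta\wedge(d\theta)^{\wedge n}\neq 0$ everywhere.

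Next I would exploit almost homogeneity. Let $G=\Aut^0(X)$, with Lie algebra $\mathfrak g=H^0(X,T_X)$; since the contact distribution $F$ is intrinsically attached to the contact structure, $G$ preserves $F$ and hence acts on $L=T_X/F$, so contraction with $\theta$ defines a linear map $\mathfrak g\to H^0(X,L)$, $v\mapsto\theta(v)$. On the open orbit $\Omega$ the evaluations $v\mapsto v(x)$ surject onto $T_{X,x}$, and as $\theta$ is surjective the sections $\theta(v)$ generate $L$ on $\Omega$; thus $\Bs|L|\subseteq X\setminus\Omega$ and $L$ defines the contact moment map $\mu\colon X\dashrightarrow\bP(\mathfrak g^\ast)$, which is a morphism on $\Omega$. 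Using the standard relation between a contact vector field and its Hamiltonian, $d(\theta(v))|_F=-\iota_v(d\theta)|_F$, together with the non-degeneracy of $d\theta|_F$, one checks that $d\mu$ is injective along $F$; hence $\mu$ has at most one-dimensional fibres (the Reeb direction) and its image $Z:=\overline{\mu(X)}\subseteq\bP(\mathfrak g^\ast)$ is a $G$-invariant projective variety of dimension $2n$. In particular $\kappa(L)=2n$, so $-K_X$ is \emph{not} big and $L$ alone does not make $X$ Moishezon.

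To establish projectivity I would invoke the structure theory of almost homogeneous compact K\"ahler manifolds: $G$ acts equivariantly on the Albanese torus $A=\mathrm{Alb}(X)$, and $X$ is projective exactly when $A$ is an abelian variety; so it suffices to upgrade the codimension-one algebraic structure furnished by $\mu$ to full algebraic dimension, i.e.\ to produce one further meromorphic function algebraically independent from the pullback of the function field of $Z$. The natural candidate is an invariant along the one-dimensional fibres of $\mu$: I would argue that the Reeb foliation is algebraic by identifying its leaves with orbits of the central part of $G$ acting through the scaling of $\theta$, so that they are rational or elliptic curves swept out by the group, which forces $A$ to be abelian and hence $X$ Moishezon, therefore projective. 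The main obstacle is precisely this last passage from dimension $2n$ to $2n+1$: it is the K\"ahler-specific point (the substitute for the uniruledness that is automatic in the projective setting but, as noted after \cite{De02}, unavailable in general) and is where the almost homogeneity must be used in an essential way. Once projectivity is secured, \cite{KPSW00} and \cite{De02} conclude.
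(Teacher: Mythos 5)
Your central reduction---``the entire problem is to prove that $X$ is projective''---is not merely unproven, it is false, and this sinks the strategy irreparably. The conclusion of the theorem explicitly includes non-projective manifolds: take $A$ a generic complex torus of dimension $n\geq 2$ (algebraic dimension $0$), and set $X=\bP(T_A)=A\times\bP_{n-1}$. This $X$ is compact K\"ahler, homogeneous (in particular almost homogeneous), has $b_2(X)=b_2(A)+1\geq 2$, and carries the canonical contact structure that every projectivised tangent bundle carries; yet $X$ is not projective, since $A\times\{pt\}\subset X$ would otherwise be projective. So no correct argument can establish projectivity of $X$, and your final step---forcing the Albanese torus to be abelian via Reeb leaves that are ``rational or elliptic curves swept out by the group''---must break down precisely on this example, where the Albanese torus is $A$ itself. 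A second, independent error occurs in the moment-map paragraph: you conflate $\Aut^0(X)$ with the group of \emph{contact} automorphisms. Almost homogeneity says that \emph{all} holomorphic vector fields span $T_X$ generically, but the Hamiltonian relation $d(\theta(v))|_F=-\iota_v(d\theta)|_F$, on which your injectivity claim for $d\mu$ rests, holds only for vector fields preserving $F$; and $\Aut^0(X)$ need not preserve $F$, because the contact structure is in general neither unique nor $\Aut^0$-invariant (the paper itself notes that $\bP(T_{\bP_n})$ has two contact structures, and by \cite{Le95} the contact fields correspond exactly to $H^0(X,L)$). In the torus example above, the contact fields are only the lifts of translations of $A$, the ``moment map'' is the projection $A\times\bP_{n-1}\to\bP_{n-1}$, with image of dimension $n-1$ and fibres of dimension $n$---contradicting both your claim that the image has dimension $\dim X-1$ and your claim that the fibres are at most one-dimensional.

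For comparison, the paper's proof is designed exactly to accommodate the non-projective case. It invokes the theorem of Barth--Oeljeklaus \cite{BO74}: for an almost homogeneous compact K\"ahler manifold, the Albanese map $\alpha\colon X\to A$ is a fibre bundle with connected, simply connected, \emph{projective} fibres. If $q(X)=0$, then $X$ is projective and the results of \cite{KPSW00} and \cite{De02} apply verbatim---this is the only place where your reduction is valid. If $q(X)\geq 1$ and the Albanese fibre has $b_2=1$, the arguments of \cite{KPSW00}, Prop.~2.11 and Thm.~2.12 (which do not use projectivity of $X$) show $X\simeq\bP(T_A)$. If the fibre has $b_2\geq 2$, one runs a \emph{relative} Mori contraction over $A$ (legitimate because $\alpha$ is a projective morphism, \cite{Na87}), shows it has positive-dimensional fibres, identifies the general fibre with $\bP_n$, and concludes $X=\bP(T_Y)$ using a K\"ahler version of Fujita's equidimensionality criterion (Lemma~\ref{Kaehler}). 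In short: the K\"ahler-specific substitute for projectivity is relative Mori theory over the Albanese torus, not an attempt to prove projectivity, which is impossible.
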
 
Here a manifold is said to be almost homogeneous, if the group of holomorphic automorphisms acts with an open orbit. 
Equivalently, the holomorphic vector fields generate the tangent bundle $T_X$ at some (hence at the general) point. \\
In this setting it might be interesting to try to classify all compact almost homogeneous K\"ahler manifolds $X$ of the form
$X = \bP(T_Y).$ Section 4 studies this question in dimension $3.$ 
\vskip .2cm \noindent 
In the second part of the paper we treat the deformation problem for projective contact manifolds. 
We consider a family 
\[
\pi\colon \sX \to \Delta
\]
of projective manifolds over the 1-dimensional disc $\Delta \subset \bC.$ Suppose that all $X_t = \pi^{-1}(t)$  are contact for $t \ne 0$. 
Is then $X_0$ also a contact manifold? \\
Suppose first that $b_2(X_t) = 1.$ Then---as discussed above---$X_t$ should be 
homogeneous for $t  \ne 0.$ Assuming homogeneity, the
situation is well-understood by the work of Hwang and Mok. In fact, then $X_0$ is again homogeneous with one surprising $7$-dimensional
exception, discovered by Pasquier-Perrin \cite{PP10} and elaborated further by Hwang \cite{Hw10}. Therefore one has rigidity and the contact
structure survives unless the Pasquier-Perrin case happens, where the contact structure does not survive. 
We refer to \cite{Hw10} and the references given at the beginning of section 5.
Therefore---up to the homogeneity conjecture---the situation is well-under\-stood. \\
If $b_2(X_t) \geq 2$, the situation gets
even more difficult; so we will assume that $X_t$ is homogeneous for $t \ne 0.$ We give a short argument in sect.~2, showing that then $X_t$
is either $\bP(T_{\bP_n})$ or a product of a torus and $\bP_n$. Then we investigate the {\it global projective rigidity} of $\bP(T_{\bP_n})$:
\begin{theorem} 
Let $\pi\colon \sX \to \Delta $ be a projective family of compact manifolds. If $X_t \simeq \bP(T_{\bP_n})$ for $t  \ne 0 $,
then either $X_0 \simeq \bP(T_{\bP_n})$ or $X_0 \simeq \bP(V)$ with some unstable vector bundle $V$ on $\bP_n.$ 
\end{theorem}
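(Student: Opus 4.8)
The plan is to study the deformation by understanding the relative geometry of the family $\pi\colon \sX \to \Delta$. Since $X_t \simeq \bP(T_{\bP_n})$ for $t \ne 0$, each such fiber carries a natural $\bP^{n-1}$-bundle structure over $\bP_n$ coming from the projection, and the relative tangent sheaf $T_{\sX/\Delta}$ restricts on each $X_t$ to the tangent bundle of the fiber. First I would analyze the cohomological invariants that are locally constant in flat families: the Picard group, the Chern classes, and the dimensions of cohomology groups $h^i(X_t, \sO(mK_{X_t}))$ and related twists. Since $\bP(T_{\bP_n})$ is Fano, $-K_{X_t}$ is ample for $t \ne 0$, and by semicontinuity and deformation invariance of $h^0$ for the anticanonical and pluri-anticanonical systems, I expect to control the central fiber $X_0$.

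The key structural step is to recover a $\bP_n$-bundle or projective-space-bundle structure on $X_0$ by specialising the contraction that exhibits $X_t$ as $\bP(T_{\bP_n})$. On $\bP(T_{\bP_n})$ the two natural extremal contractions (the bundle projection to $\bP_n$ and the other ruling, reflecting the symmetry of $T_{\bP_n}$ under the incidence correspondence) give $b_2 = 2$ with the effective cone spanned by two rays. I would argue that the two-dimensional nef and effective cones deform, so that $X_0$ again has $b_2 = 2$ and admits an extremal contraction. Running the relevant Mori contraction on $X_0$, or rather studying the limit of the relative projection $\sX \to \bP_n \times \Delta$ (which may degenerate), should produce a morphism $X_0 \to \bP_n$ realising $X_0$ as $\bP(V)$ for some rank-$n$ bundle $V$ on $\bP_n$, since the fibers are limits of $\bP^{n-1}$'s and projective bundle structures are preserved under suitable flatness hypotheses. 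The dichotomy in the statement then reduces to whether $V$ is semistable: if $V$ is (semi)stable, then by the rigidity of $T_{\bP_n}$ among stable bundles with the same Chern classes---$T_{\bP_n}$ is rigid and the only stable bundle with its invariants---one forces $V \simeq T_{\bP_n}$ up to twist, giving $X_0 \simeq \bP(T_{\bP_n})$; otherwise $V$ is unstable, which is precisely the second alternative.

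The main obstacle I expect is ensuring that the limiting projection $\sX \to \bP_n \times \Delta$ actually exists and is flat with the central fiber remaining a genuine projective bundle, rather than acquiring singular or reducible fibers; equivalently, showing that the $\bP^{n-1}$-bundle structure does not jump in the limit. This requires controlling the relative extremal ray over $\Delta$ and showing that its contraction is equidimensional with irreducible fibers over $\bP_n \times \Delta$. Here I would use that the relevant rational curves (lines in the fibers of $X_t \to \bP_n$) deform in the family and sweep out $X_0$, invoking deformation theory of rational curves on uniruled manifolds to construct the covering family on $X_0$ whose quotient is $\bP_n$. Once the projective bundle structure $X_0 \simeq \bP(V)$ is established, the semistability dichotomy is then a comparatively clean consequence of the classification of bundles on $\bP_n$ with Chern classes equal to those of $T_{\bP_n}$, using that the rigidity and uniqueness of $T_{\bP_n}$ among stable bundles pins down the stable case completely.
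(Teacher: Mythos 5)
Your overall strategy matches the paper's: produce a relative Mori contraction $\Phi\colon \sX \to \sY$ over $\Delta$, show that the central fiber inherits a morphism $\phi_0\colon X_0 \to \bP_n$ realising $X_0 = \bP(V)$, and then split into cases according to whether $V$ is semistable. However, the step you yourself flag as ``the main obstacle''---that $\phi_0$ is equidimensional, so that the projective-bundle structure does not jump in the limit---is exactly the heart of the paper's proof, and your proposed fix does not close it. Deforming the lines in the fibers of $X_t \to \bP_n$ does give a covering family of rational curves on $X_0$, but the quotient of such a family is a priori only an almost holomorphic rational map, and deformation theory of rational curves yields \emph{lower} bounds on fiber dimensions of the contraction (via length/Ionescu--Wi\'sniewski type inequalities), not the \emph{upper} bound you need: nothing in that theory prevents some fiber of $\phi_0$ from having dimension $> n-1$, which is precisely the jumping you must exclude. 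Note also that the authors stress that projectivity of $X_0$ is indispensable for their proof, so any correct argument must use it essentially; your rational-curves argument does not. The paper's solution (Sublemma~\ref{sub1}) is a purely intersection-theoretic one: if $F_0$ is an irreducible component of a fiber of $\phi_0$ of codimension $k<n$, then since $X_0$ is homeomorphic to $\bP(T_{\bP_n})$, Leray--Hirsch gives $\dim H^{2k}(X_0,\bQ)=k+1$ with basis $L^{k-\ell}.(\phi_0^*H)^{\ell}$, $0\le\ell\le k$ (here $L$ is ample on $X_0$ and $H$ the hyperplane class on $\bP_n$); writing $[F_0]=\sum_\ell \alpha_\ell\, L^{k-\ell}.(\phi_0^*H)^\ell$ and intersecting successively with $L^{n-k-1+\ell_0}.(\phi_0^*H)^{n-\ell_0}$, the relations $[F_0].(\phi_0^*H)^{n-\ell_0}=0$ (valid because $F_0$ lies in a fiber and $\ell_0\le k<n$) force every $\alpha_{\ell_0}$ to vanish, hence $[F_0]=0$, which is impossible for a subvariety of a projective variety. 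Without this (or an equivalent) argument, your proof has a genuine hole at its central step.

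Two smaller gaps. First, you take for granted that the target of the limit contraction is $\bP_n$; the paper proves $Y_0\simeq\bP_n$ by producing an ample line bundle $\sL'$ on $Y_0$ with $h^0(\sL')\ge n+1$ (semicontinuity) and $c_1(\sL')^n=1$, and then invoking Fujita's classification of polarized varieties. Second, your uniqueness claim in the semistable case should be justified by separatedness of the moduli space of semistable sheaves---the limit of the constant family $T_{\bP_n}$ must be S-equivalent to $T_{\bP_n}$, which is stable, whence $V\simeq T_{\bP_n}$---rather than by the unproved (and unnecessary) assertion that $T_{\bP_n}$ is the \emph{only} stable bundle on $\bP_n$ with its Chern classes.
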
 
The assumption that $X_0$ is projective is indispensable for our proof. In general, $X_0$ is only Moishezon, and in particular methods from
Mori theory fail. In case $X_0$ is even assumed to be Fano, the theorem was proved by Wi\'sniewski \cite{Wi91}; in this case $X_0 \simeq \bP(T_{\bP_n}).$ 
The case $X_0 \simeq \bP(V)$ with an unstable bundle really occurs; we provide examples in all dimensions in section~\ref{sec:degen}. In this case $X_0$ is no longer a contact manifold.

In general, without homogeneity assumption, $X_t $ is the projectivisation of the tangent bundle of some projective variety $Y_t;$ here we have only some partial results,
see Proposition~\ref{partial}. If however $X_t$ is again homogeneous ($t  \ne 0$) and not the projectivization of the tangent bundle of a projective space, 
then $X_t $ is a product of a torus $A_t$ and a projective
space, and we obtain a rather clear picture, described in Section~\ref{sec:irr}. 

The work on the project was started in collaboration with Kristina Frantzen. We would like to heartily thank her for her
contributions to sections 2, 3 and 4.

\section{Homogeneous K\"ahler Contact Manifolds}

We first study homogeneous manifolds which are projectivized tangent bundles.

\begin{proposition} \label{hom} Let $Y$ be compact K\"ahler. Then $X = \bP(T_Y)$ is homogeneous if and only if $Y$ is a torus or
$Y = \bP_n.$ 
\end{proposition}

\begin{proof} One direction being clear, assume that $X$ is homogeneous; thus $Y$ is homogeneous, too. 
The theorem of Borel-Remmert  \cite{BR62} says that 
$$Y \cong A \times G/P$$ where $G/P$ is a rational homogeneous manifold ($G$ a semi-simple complex Lie group and $P$ a parabolic subgroup) and $A$ a torus, one
factor possibly of dimension $0$. Let $d = \dim A \geq 0.$ \\
We first assume that $d > 0$. 
Then $T_Y = \sO_Y^d \oplus T_{G/P} $ leading to an inclusion
\[
Z := \bP(\sO_Y^d) \subset X
\]
with normal bundle 
\[
N_{Z/X} = \sO_Z(1) \otimes \pi^*q^*(\Omega^1_{G/P}) = p^*(\sO(1)) \otimes  \pi^*q^*(\Omega^1_{G/P}).
\]
Here $\pi\colon X \to Y$, $p\colon Z = \bP_{d-1} \times Y \to \bP_{d-1}$ and $q\colon Y \to G/P$ are the projections. 
Now, $X$ being homogeneous, $N_{Z/X}$ is spanned. This is only possible when $\dim G/P = 0$ so that $Y = A$. \\
If $d = 0$, then $X$ is rational homogeneous, hence Fano. This is to say that $T_Y$ is ample, hence
$Y = \bP_n$ (we do not need Mori's 
theorem here because $Y$ is already homogeneous). 
\end{proof} 

Proposition~\ref{hom} is now applied to obtain

\begin{proposition}
Let $X$ be a homogeneous compact K\"ahler manifold with contact structure and $\dim X = 2n-1$. Then either $X$ is a
Fano manifold (and therefore $X \simeq \bP(T_{\bP_n})$, by Prop.~\ref{hom}, unless $b_2(X) = 1$) or 
\[
X \cong A \times \mathbb P_{n-1} = \mathbb P (T_A),
\]
where $A$ denotes a complex torus of dimension $n$ and $T_A$ its holomorphic tangent bundle. 
\end{proposition}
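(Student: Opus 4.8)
The plan is to analyze a homogeneous compact Kähler contact manifold $X$ of dimension $2n-1$ by invoking the Borel-Remmert structure theorem, which writes $X \cong A \times G/P$ as a product of a torus $A$ and a rational homogeneous manifold $G/P$. The strategy is to show that the contact structure forces the rational homogeneous factor to be very constrained, and then to identify $X$ explicitly in the non-Fano case.

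First I would apply Borel-Remmert to the homogeneous manifold $X$, giving $X \cong A \times G/P$ with $A$ a torus of dimension $d \geq 0$. If $d = 0$ then $X = G/P$ is rational homogeneous, hence Fano, and we are in the first alternative of the statement; here Proposition~\ref{hom} (or rather its spirit applied to $X$ directly) combined with the classification of projective contact manifolds with $b_2 \geq 2$ gives $X \simeq \bP(T_{\bP_n})$. So the substantive case is $d > 0$, and the goal is to prove $G/P$ must be a single projective space $\bP_{n-1}$ and $A$ a torus of dimension $n$, yielding $X \cong A \times \bP_{n-1}$.

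The key step is to exploit the contact sequence $0 \to F \to T_X \to L \to 0$ on $X = A \times G/P$. On the torus factor the tangent bundle is trivial, so $T_X = \sO_X^{\,d} \oplus \mathrm{pr}^* T_{G/P}$, where $\mathrm{pr}$ is the projection to $G/P$. The contact line bundle $L$ is a quotient of $T_X$, and since $A$ contributes only trivial summands, one should analyze how $L$ restricts to the fibres $\{a\} \times G/P$ and to the torus slices $A \times \{x\}$. Restricting the contact structure to a torus slice $A \times \{x\}$, the tangent bundle there is trivial, so the non-degeneracy of the contact form $\bigwedge^2 F \to L$ combined with triviality forces strong numerical constraints; in particular $L$ restricted to such a slice should be trivial, and the non-degenerate alternating form lives essentially on the $G/P$ directions. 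Pushing this through, the only way to accommodate a contact structure with a nontrivial torus factor is for $G/P$ to be $\bP_{n-1}$ and the dimensions to match as $\dim A = n$, so that $X \cong A \times \bP_{n-1}$.

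The final identification $A \times \bP_{n-1} = \bP(T_A)$ is then a direct computation: since $T_A = \sO_A^{\,n}$ is trivial of rank $n$, its projectivisation is $\bP(\sO_A^{\,n}) = A \times \bP_{n-1}$, confirming the stated isomorphism and exhibiting $X$ as a projectivised tangent bundle consistent with Proposition~\ref{hom}. I expect the main obstacle to be the middle step: carefully tracking the contact line bundle and the non-degeneracy condition across the product decomposition to rule out any nontrivial semisimple factor and to pin down the dimensions, since a priori $G/P$ could be a product of several rational homogeneous pieces and one must show the contact condition is incompatible with all of them except a single projective space of the correct dimension.
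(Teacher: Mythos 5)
Your skeleton coincides with the paper's: Borel--Remmert gives $X \cong A \times G/P$, the case $\dim A = 0$ is handled exactly as in the paper (Fano, $b_2 \geq 2$, then \cite{KPSW00} plus Proposition~\ref{hom}), and all the substance sits in the case $\dim A > 0$. There, however, your proposal has a genuine gap: the decisive claim --- ``pushing this through, the only way to accommodate a contact structure with a nontrivial torus factor is for $G/P$ to be $\bP_{n-1}$ and the dimensions to match'' --- is not an argument but a restatement of the proposition in this case. Nothing in your sketch excludes, say, $G/P$ a quadric or a product of two projective spaces, and nothing pins $\dim G/P$ down to $n-1$. Similarly, your assertion that $L$ restricts trivially to the torus slices $A \times \{x\}$ is announced as something that ``should'' hold rather than derived; it is true, but it needs a proof (write $L = \mathrm{pr}_1^*L_A \otimes \mathrm{pr}_2^*L_Z$ using $\Pic(A \times G/P) \simeq \Pic(A) \oplus \Pic(G/P)$, then check via a K\"unneth decomposition of $H^0(X, \Omega^1_X \otimes L)$ that a nonzero contact form forces $L_A \simeq \sO_A$). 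You correctly identify this middle step as the main obstacle, but flagging an obstacle does not overcome it.

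For comparison, the paper disposes of this step by projecting the other way: it considers $\pi\colon X \cong A \times G/P \to A$, notes that every fiber is the Fano manifold $G/P$, and invokes the arguments of \cite{KPSW00}, Proposition~2.11 (which, as the paper remarks, do not use projectivity of $X$) to conclude that every fiber is $\bP_{n-1}$; the identification $X \cong A \times \bP_{n-1} = \bP(T_A)$ is then immediate, as in your last paragraph. If you insist on a self-contained analysis of the contact form on the product, the workable route is: (i) as above, $L = \mathrm{pr}_2^* L_Z$, and from $\det F \simeq L^{n-1}$ one gets $-K_X = nL$, hence $-K_{G/P} = n L_Z$ with $L_Z$ ample; (ii) show the fibers $\{a\} \times G/P$ are tangent to the contact distribution $F$, so that, being integral submanifolds (brackets of vector fields tangent to the fiber stay tangent to the fiber), they are isotropic for the form $\bigwedge^2 F \to L$, giving $\dim G/P \leq \tfrac12 \mathrm{rk}\, F = n-1$; (iii) since $-K_{G/P} = nL_Z$ with $n \geq \dim G/P + 1$, Kobayashi--Ochiai yields $(G/P, L_Z) \simeq (\bP_{n-1}, \sO(1))$, whence $\dim A = n$ and $X \cong \bP(T_A)$. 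Note that step (ii) itself requires excluding the case $T_{G/P} \not\subset F\vert_{G/P}$, i.e.\ a nonzero section of $\Omega^1_{G/P} \otimes L_Z$ arising from the contact form --- this is precisely the kind of work done in \cite{KPSW00}, Proposition~2.11, and is why the paper defers to it. As written, your proposal does not establish the proposition.
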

\begin{proof}
Again by the theorem of Borel-Remmert, $X \cong A \times G/P$ where $G/P$ is rational-homogeneous and $A$ a torus, one
factor possibly of dimension~$0$. 
If $A$ does not appear, then $X$ is Fano with $b_2(X) \geq 2$ and therefore by \cite{KPSW00} of the form $X = \bP(T_Y)$. 
Then we conclude by Prop.~\ref{hom}. \\ 
So we may
assume $\dim A > 0$. 
Since a torus does not admit a contact structure, it follows that the factor $G/P$ is nontrivial, i.e.~$\dim G/P \geq 1$. 
We consider the projection $\pi\colon X \cong A \times G/P \to A $. Every fiber is $G/P$ and in particular a Fano manifold. 
We may therefore use the arguments of \cite{KPSW00}, Proposition 2.11, to conclude that every fiber is $\mathbb P_{n-1}$. 
Note that the arguments used in \cite{KPSW00}, Proposition 2.11 
do not use the assumption that $X$ is projective. This completes the proof. 
\end{proof}
\section{ The Almost homogeneous case}
The aim af this section is to generalize the previous section to almost homogeneous contact manifolds.
\subsection{Almost homogeneous projectivized tangent bundles}
We begin with the following general observation.
\begin{lemma}
 Let $Y$ be a compact complex manifold and let $X = \mathbb P(T_Y)$ be its projectivised tangent bundle. If $X$ is almost homogeneous, then $Y$ is almost homogeneous.
\end{lemma}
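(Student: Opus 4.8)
The plan is to exploit the projection $\pi\colon X = \mathbb{P}(T_Y) \to Y$ and to push holomorphic vector fields on $X$ down to $Y$. Since $X$ is compact, the characterization of almost homogeneity recalled in the introduction applies: a compact complex manifold is almost homogeneous if and only if its global holomorphic vector fields generate the tangent bundle at a general point. Thus it is enough to produce, out of the vector fields on $X$, sufficiently many vector fields on $Y$.

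First I would construct the pushforward map on vector fields. The relative tangent sequence
\[
0 \to T_{X/Y} \to T_X \xrightarrow{\,d\pi\,} \pi^* T_Y \to 0,
\]
together with the projection formula and the identity $\pi_* \sO_X = \sO_Y$ (the fibers of $\pi$ being projective spaces $\bP_{n-1}$, which carry no nonconstant global functions), yields $\pi_*(\pi^* T_Y) = T_Y$ and hence a natural linear map
\[
\pi_*\colon H^0(X, T_X) \to H^0(X, \pi^* T_Y) = H^0(Y, T_Y).
\]
The point that makes everything work is the pointwise compatibility: for $v \in H^0(X, T_X)$ and $x \in X$ with $y = \pi(x)$, the value $(\pi_* v)(y)$ equals $d\pi_x(v(x))$. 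Indeed the sheaf map $T_X \to \pi^* T_Y$ is fiberwise just the differential $d\pi$, and the identification $H^0(X, \pi^* T_Y) = H^0(Y, T_Y)$ expresses precisely that the image of $v$ is constant along the fibers of $\pi$, so its value at $x$ depends only on $y$ and agrees with $(\pi_* v)(y)$.

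Then I would run the generation argument. Choose a general point $x \in X$ and set $y = \pi(x)$; by almost homogeneity of $X$ the values $\{\,v(x) : v \in H^0(X,T_X)\,\}$ span $T_x X$. Applying the surjection $d\pi_x\colon T_x X \to T_y Y$ (recall $\pi$ is a submersion) and invoking the compatibility above, the values $\{\,(\pi_* v)(y) : v \in H^0(X, T_X)\,\}$ span $T_y Y$. Hence the vector fields lying in the image of $\pi_*$ already generate $T_Y$ at the general point $y$, so $Y$ is almost homogeneous.

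Rather than a genuine obstacle, the only steps requiring care are the identification $\pi_* \sO_X = \sO_Y$ and the resulting $\pi_*(\pi^* T_Y) = T_Y$, which rest on the fibers being $\bP_{n-1}$, and the compatibility of the sheaf-level pushforward with pointwise evaluation, which is immediate once the definition of $T_X \to \pi^* T_Y$ is unwound. Notably, no finer information about the structure of $\Aut(X)$, and no use of the K\"ahler hypothesis, seems to be needed for this direction.
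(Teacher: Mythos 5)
Your proof is correct and follows essentially the same route as the paper: both arguments rest on the relative tangent sequence $0 \to T_{X/Y} \to T_X \to \pi^*T_Y \to 0$, the generic spannedness of $T_X$, and the identification $H^0(X,\pi^*T_Y) \simeq H^0(Y,T_Y)$ coming from $\pi_*\sO_X = \sO_Y$, which lets sections of $\pi^*T_Y$ descend to vector fields on $Y$ hitting prescribed values at a general point. Your write-up merely makes the pointwise compatibility of the pushforward with $d\pi$ more explicit than the paper does.
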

We already mentioned that if $X$ is homogeneous, so is $Y$.
\begin{proof}
Let $\pi\colon X \to Y$ be the bundle projection and consider the relative tangent sequence
\[
 0 \to T_{X/Y} \to T_X \to \pi^* T_Y \to 0.
\]
Since at a general point  of $X$ the tangent bundle $T_X$ is spanned by global sections, so is $\pi^* T_Y$. So if $y \in Y$ is general, if  $x \in \pi^{-1}(y)$ is general
and $v \in (\pi^*T_Y)_x $, then there exists 
$$s \in H^0(X,\pi^*(T_Y))$$
 such that $s(x) = v.$ Since $s = \pi^*(t)$ with
$t \in H^0(Y,T_Y),$ we obtain $t(y) = v \in T_{Y,y}.$ 
Thus $Y$ is almost homogeneous.
\end{proof}
\begin{remark} {\rm
Note that, conversely, the projectivized tangent bundle $X=\mathbb P (T_Y)$ of an almost homogeneous manifold $Y$ is in general {\bf not} almost homogeneous. This is illustrated by the following examples. }
\end{remark}
\begin{example}
We start in a quite general setting with a projective manifold $Y$ of dimension $n$. We assume that $Y$ is almost homogeneous with $h^0(Y,T_Y) = n$. Furthermore we assume 
\begin{equation} \label{eq1} h^0(Y,\Omega^1_Y \otimes T_Y) =  h^0(Y,\mathrm{End}(T_Y)) = 1,\end{equation} 
an assumption which is e.g.~satisfied if $T_Y$ is stable for some polarization.   
We let $X = \mathbb P (T_Y)$ be the projectivized tangent bundle with projection  $\pi\colon X = \mathbb P (T_Y)  \to Y$ and hyperplane bundle
$\sO_X(1).$
Pushing forward the relative Euler sequence to $Y$ yields  
\[
 0 \to \mathcal O_Y \to \Omega^1_Y \otimes \pi_* (\mathcal O_X(1)) \to \pi_* T_{X/Y} \to 0.
\]
Since $\pi_* (\mathcal O_X(1)) = T_Y$,  we obtain 
\[
 0 \to \mathcal O_Y \to \Omega^1_Y \otimes T_Y \to \pi_* T_{X/Y} \to 0.
\]
This sequence splits via the trace map $\Omega^1_Y\otimes T_Y\simeq\mathrm{End}(T_Y)\to\sO_Y$, so we obtain the exact sequence
\[
 0 \to H^0(Y,\mathcal O_Y) \to H^0(Y, \Omega^1_Y \otimes T_Y) \to H^0(Y,\pi_* T_{X/Y} ) \to 0.
\]
Using assumption \eqref{eq1} we find 
\[ H^0(X,T_{X/Y}) = 
 H^0(Y,\pi_* T_{X/Y}) = 0.
\]
Now the relative tangent sequence with respect to $\pi\colon X \to Y$ yields 
an exact sequence
\[
0 \to H^0(X,T_{X/Y}) \to H^0(X,T_X) \to H^0(X,\pi^*(T_Y)) \simeq H^0(Y,T_Y)
\]
and therefore
\[
h^0(T_X) \leq h^0(T_Y).
\]
Hence $h^0(T_X) \leq n$, and $X$ cannot be almost homogeneous. 

\vskip .2cm \noindent Notice that an inequality $h^0(T_X) \leq 2n-2$ suffices to conclude that $X$ is not almost homogeneous. Therefore we could weaken the assumptions $h^0(T_Y)=n$ and $h^0(\mathrm{End}(T_Y)) = 1$ to 
\[
h^0(T_Y)+h^0(\mathrm{End}(T_Y)) \leq 2n-2.
\]

\vskip .2cm \noindent We give two specific examples. 

\vskip .2cm \noindent First, 
let $Y$ be a del Pezzo surface of degree six, i.e., a three-point blow-up of $\mathbb P_2$. Its automorphisms group is $(\mathbb C^*)^2 \rtimes S_3$. 
In particular, $Y$ is almost homogeneous and $h^0(T_Y)=2$.
Since $h^0(\mathrm{End}(T_{\mathbb P_2}))=1$ and $Y$ is a blow up of $\mathbb P_2$, each endomorphism of $T_Y$ induces an endomorphism of $T_{\mathbb P_2}$ and it follows that 
\begin{equation}
h^0(T_Y \otimes \Omega^1_Y) = h^0(\mathrm{End}(T_Y)) = 1.
\end{equation}
Hence the assumptions of our previous considerations are fulfilled and $X = \bP(T_Y)$ is not almost homogeneous. 

\vskip .2cm \noindent
Here is an example with $b_2(Y) = 1.$ We let $Y$ be the Mukai-Umemura Fano threefold of type $V_{22},$ \cite{MU83}. 
Here $h^0(T_Y) = 3$ and $Y$ is almost homogeneous with ${\rm Aut}^0(Y) = {\rm SL}_2(\mathbb C).$  
Since $T_Y$ is known to be stable (see e.g.~\cite{PW95}), again all assumptions are satisfied and $X = \bP(T_Y)$ is not almost homogeneous.
\end{example}

\subsection{The Albanese map for almost homogeneous manifolds} 

A well-known theorem of Barth-Oeljeklaus determines the structure of the Albanese map of an almost homogeneous K\"ahler manifold.
\begin{theorem}[\cite{BO74}]
Let $X$ be an almost homogeneous compact K\"ahler manifold. Then the Albanese map $\alpha\colon X \to A$ is a fiber bundle. The fibers are connected, simply-connected and projective. 
\end{theorem}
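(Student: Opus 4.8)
The plan is to combine the functoriality of the Albanese with the existence of an open orbit. Write $G=\Aut^0(X)$ for the connected automorphism group, let $O=G\cdot x_0$ be the open orbit, and normalise the Albanese so that $\alpha(x_0)=0$. By functoriality $G$ acts on $A=\mathrm{Alb}(X)$ compatibly with $\alpha$, and since the identity component of the automorphism group of a complex torus consists only of translations, this action is given by a homomorphism $\phi\colon G\to A$, $g\mapsto g\cdot 0$, with $\alpha(g\cdot x_0)=\phi(g)$. If $g$ fixes $x_0$ then $\phi(g)=0$, so the isotropy $H_0$ lies in $L:=\ker\phi$. Were $d\phi_e$ not surjective, $\alpha(O)=\phi(G)$, and hence $\alpha(X)=\overline{\alpha(O)}$, would be contained in a proper subtorus, contradicting that the Albanese image generates $A$; therefore $\phi$ is a submersion at $e$, hence open and surjective, and $A\cong G/L$ is $G$-homogeneous.

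Because $\alpha$ is $G$-equivariant and $G$ acts transitively on $A$, we have $g\cdot\alpha^{-1}(a)=\alpha^{-1}(\phi(g)+a)$; thus $G$ permutes the fibers transitively, all fibers are biholomorphic, and local holomorphic sections of the submersion $\phi$ trivialise $\alpha$, exhibiting it as a holomorphic fiber bundle with fiber $F:=\alpha^{-1}(0)$. Since $H_0\subseteq L$, the subgroup $L$ preserves $F$ and acts on it with open orbit $O\cap F=L\cdot x_0$; concretely $X\cong G\times_L F$. In particular $F$ is again compact, Kähler and almost homogeneous.

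To obtain connectedness of the fibers I would invoke the universal property of $\alpha$. Form the Stein factorisation $\alpha=\pi\circ\gamma$ with $\gamma\colon X\to\tilde A$ having connected fibers and $\pi\colon\tilde A\to A$ finite. As $A$ is $G$-homogeneous and $\pi$ is $G$-equivariant and finite, $\pi$ is a covering map, so $\tilde A$ is a connected finite cover of the torus $A$, hence itself a torus and $\pi$ an isogeny. But then $\gamma$ is a morphism from $X$ to a torus and so factors through the Albanese, $\gamma=\psi\circ\alpha$; comparing with $\pi\circ\gamma=\alpha$ forces $\pi$ to be an isomorphism. Therefore $\gamma=\alpha$ has connected fibers, equivalently $q(F)=0$.

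The remaining and hardest point is to show that the fiber $F$ — compact Kähler, almost homogeneous under $L$, with $q(F)=0$ — is projective and simply connected. I would deduce both from rational connectedness: the open orbit $L\cdot x_0=L/(H_0\cap L)$ is covered by the orbits of the one-parameter subgroups coming from the linear part of the Chevalley decomposition of $L$, and the vanishing $q(F)=0$ kills the torus directions, so $F$ should be swept out by rational curves and in fact rationally connected; a rationally connected compact Kähler manifold is projective and simply connected, which would finish the proof. The genuine difficulty lies precisely here: extracting rational connectedness (or, directly, $\pi_1(F)=1$) from almost-homogeneity together with $q(F)=0$ requires a careful analysis of the group $L$ and of the equivariant compactification $F$ of its open orbit, controlling the boundary strata, each of which is again an almost homogeneous space of smaller dimension. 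This structural analysis of the fiber is where the Kähler hypothesis and the Borel–Remmert type description of the homogeneous pieces are indispensable.
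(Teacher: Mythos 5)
The paper offers no proof of this statement at all: it is quoted verbatim from Barth--Oeljeklaus \cite{BO74}, so your attempt can only be measured against what the theorem asserts, not against an internal argument. The first two thirds of your proposal are essentially sound: the translation action $\phi\colon G=\Aut^0(X)\to A$, the trivialisation of $\alpha$ by local holomorphic sections of $\phi$, and the Stein factorisation argument for connectedness all work. One step needs repair, though: from non-surjectivity of $d\phi_e$ you cannot conclude that $\overline{\phi(G)}$ lies in a proper subtorus, because a holomorphic one-parameter subgroup can be dense in a torus of higher dimension. The correct route is that $\alpha(X)=\overline{\phi(G)}$ is simultaneously an irreducible analytic subvariety of $A$ and a subgroup, hence a complex subtorus, hence all of $A$ since the Albanese image generates $A$; then, since at points of the open orbit the image of $d\alpha$ is exactly $\im\,d\phi_e$ (the tangent space there is spanned by the fundamental vector fields of $G$, and $d\alpha$ sends the field of $v$ to the constant field $d\phi_e(v)$), surjectivity of $\alpha$ forces $\im\,d\phi_e$ to be the whole Lie algebra of $A$ by Remmert's proper mapping theorem. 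Also, your parenthetical ``connected fibers, equivalently $q(F)=0$'' is not an equivalence: a bielliptic surface has Albanese map a fiber bundle with connected elliptic fibers of irregularity $1$. In the present situation $q(F)=0$ is true, but it is part of what must be proved, not a reformulation of connectedness.

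The genuine gap is exactly where you say it is, and it is not small: simple connectedness and projectivity of the fiber $F$ --- half of the conclusion --- are left unproved. Two concrete obstructions in your sketch. First, you invoke the ``Chevalley decomposition of $L$'', but $L=\ker\phi$ is a priori only a complex Lie group: $\Aut^0$ of a compact K\"ahler manifold need not be algebraic (a non-algebraic torus acting on itself already shows this), so no Chevalley decomposition exists. What this step really requires is the Fujiki--Lieberman structure theorem --- $\Aut^0(F)$ is an extension of a subtorus of the Albanese torus of $F$ by a linear algebraic group, so $q(F)=0$ forces $\Aut^0(F)$ to be linear algebraic --- or else Barth--Oeljeklaus' original analysis; this is precisely the substance of the theorem being cited. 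Second, even granting a linear algebraic group with open orbit, ``swept out by rational curves'' yields only uniruledness, which does not imply rational connectedness ($\bP_1$ times a K3 surface is uniruled with $q=0$ and is not rationally connected), so the inference pattern is invalid as stated. The deduction that does work: the open orbit, as a homogeneous space of a connected linear algebraic group, is quasi-projective and unirational, hence $F$ is Moishezon, hence (being K\"ahler) projective, hence unirational and therefore rationally connected and simply connected. Without the structure theory supplying the linear algebraicity, your proposal establishes the fiber bundle structure and the connectedness of the fibers, but not the remaining two assertions.
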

\begin{remark}
 The fibers $X_a$ of $\alpha$ are almost homogeneous. 
\end{remark}
\begin{proof}
 Let $x,y \in X_a$ be two general points. Then there exists $ f \in \mathrm{Aut}(X)$ with $f(x)=y$. Since the automorphism $f$ is fiber preserving,
 we obtain an automorphism of $X_a$ mapping $x$ to $y$.  
\end{proof}
\subsection{The case $q(X) =0$}
If the irregularity of $X$ is $q(X)=0$, the Albanese map is trivial, and it follows that $X$ itself is simply-connected and projective. 
\begin{lemma}
Let $X$ be an almost homogeneous compact K\"ahler manifold with contact structure. If $q(X)=0$ and  $b_2(X) \geq 2$, then $X\cong \mathbb P(T_Y)$ is a projectivised tangent bundle. 
\end{lemma}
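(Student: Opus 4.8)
The plan is to observe that the hypothesis $q(X)=0$ collapses the situation to the already-understood projective case, after which the structure theorem recalled in the introduction applies verbatim. So the entire proof is a reduction, and the real work has already been done elsewhere.

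First I would invoke the Barth-Oeljeklaus theorem stated just above. Since $q(X)=0$, the Albanese torus $A$ has dimension $\dim A = q(X) = 0$, hence $A$ is a single point and the Albanese map $\alpha\colon X \to A$ is constant. By Barth-Oeljeklaus, $\alpha$ is a fiber bundle whose fibers are connected, simply-connected and projective; as the base $A$ is a point, the unique fiber is $X$ itself. Therefore $X$ is simply-connected and---this is the decisive point---projective. This is exactly the assertion already announced in the preamble of this subsection, so no new argument is needed here beyond spelling out the dimension count.

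Next I would apply the projective structure theory directly. We now have a projective contact manifold $X$ with $b_2(X) \geq 2$, and by \cite{KPSW00} together with \cite{De02} (as recalled in the introduction) every such manifold is a projectivised tangent bundle $\mathbb P(T_Y)$ over a projective manifold $Y$. This is precisely the conclusion of the lemma.

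I expect essentially no obstacle: all the content lies in the reduction to projectivity via vanishing irregularity. The single point requiring care is to confirm that we really land in the projective regime, not merely the Moishezon one---for in the Moishezon case the Mori-theoretic input of \cite{KPSW00}, \cite{De02} would fail (indeed the introduction explicitly flags this difficulty for the later deformation problem). Here that difficulty does not arise, since the projectivity of the Albanese fiber is guaranteed outright by Barth-Oeljeklaus, and $X$ coincides with that fiber.
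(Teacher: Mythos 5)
Your proof is correct and follows exactly the paper's own route: the vanishing of $q(X)$ together with the Barth--Oeljeklaus theorem (which applies since $X$ is almost homogeneous) shows $X$ is simply-connected and projective, after which the projective structure theorem of \cite{KPSW00} combined with \cite{De02} gives $X \cong \mathbb{P}(T_Y)$. No gaps here; your emphasis on landing in the projective (rather than merely Moishezon) regime is precisely the point the paper also relies on.
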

\begin{proof}
$X$ being projective, the results of \cite{KPSW00} apply. Combining them with \cite{De02} (cf. Corollary 4) yields the desired result.
\end{proof}
\begin{remark} \label{bea}
{\rm  The case where $q(X)=0$ and $b_2(X) =1$ remains to be studied. Here $X$ is an almost homogeneous Fano manifold. It would be interesting to find out whether the results of \cite{Be98} apply. I.e., 
one has to check whether $\mathrm{Aut}(X)$ is reductive and 
whether the map associated with the contact line bundle $L$ is generically finite. 

In order to study the second property, consider the long exact sequence
\[
 0 \to H^0(X,F) \to H^0(X,T_X) \to H^0(X,L) \to \dots
\]
If $H^0(X,F) \neq 0$ then $X$ has more than one contact structure \cite{Le95}, Prop.2.2, hence Corollary 4.5 of \cite{Ke01} implies that $X \cong \mathbb P_{2n+1}$ or $X \cong \mathbb P(T_Y)$.

If $H^0(X,F) = 0$ then $L$ has ``many sections'' and the map associated with $L$ is expected to be generically finite. }
\end{remark}
\subsection{The case $q(X) \geq 1$}
If the irregularity of $X$ is positive, then the Albanese map $\alpha\colon X \to A$ is a fiber bundle. We denote its fiber by $X_a$. 
\begin{lemma}
Let $X$ be an almost homogeneous compact K\"ahler manifold with contact structure and $q(X) \geq 1$. If the fiber $X_a$ of the Albanese map fulfills $b_2(X_a) =1$, then $X \cong \mathbb P (T_A)= \mathbb P_n \times A$, where $A$ is the Albanese torus of~$X$.
\end{lemma}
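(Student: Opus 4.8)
The plan is to exploit the Albanese fibration together with the contact structure, reducing everything to the geometry of a single fibre. By the theorem of Barth--Oeljeklaus the map $\alpha\colon X\to A$ is a holomorphic fibre bundle whose fibre $X_a$ is connected, simply connected and projective, and by the remark above $X_a$ is again almost homogeneous. Since $\alpha$ is a submersion onto the torus $A$, the normal bundle of a fibre is $N_{X_a/X}=\alpha^*T_A|_{X_a}\cong\mathcal{O}_{X_a}^{\,q}$, where $q=\dim A=q(X)$. The contact sequence $0\to F\to T_X\to L\to 0$ gives the purely bundle-theoretic identity $-K_X=\det T_X=L^{\otimes n}$ on all of $X$ (no projectivity is needed here). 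Restricting to $X_a$ and using adjunction with the trivial normal bundle, I would obtain $-K_{X_a}=(L|_{X_a})^{\otimes n}$.

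First I would identify the fibre. As $X_a$ is almost homogeneous it is uniruled (its open orbit is swept out by orbits of the unipotent or torus part of $\mathrm{Aut}^0(X_a)$), so together with $b_2(X_a)=1$ the class $-K_{X_a}$ is a positive multiple of the ample generator of $\mathrm{Pic}(X_a)=\mathbb Z$; hence $X_a$ is Fano and $-K_{X_a}=(L|_{X_a})^{\otimes n}$ forces $L|_{X_a}=\mathcal{O}_{X_a}(1)$. I would then apply the argument of \cite{KPSW00}, Proposition 2.11 --- which, as already noted in the homogeneous case, does not use projectivity of the total space --- to the fibration $\alpha$ with Fano fibres of Picard number one, concluding $X_a\cong\mathbb P_{n-1}$. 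This gives $\dim X_a=n-1$ and therefore $q=\dim A=(2n-1)-(n-1)=n$, so that $\alpha\colon X\to A$ is a $\mathbb P_{n-1}$-bundle over an $n$-dimensional torus; write $X=\mathbb P(E)$ with $E$ of rank $n$ and $L=\mathcal{O}_X(1)\otimes\alpha^*M$ for some line bundle $M$ on $A$.

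To conclude I would show the bundle is $\mathbb P(T_A)$. Restricting the contact form $\theta\colon T_X\to L$ to a fibre gives a map $T_{X_a}\to L|_{X_a}=\mathcal{O}_{\mathbb P_{n-1}}(1)$, i.e. an element of $\mathrm{Hom}(T_{\mathbb P_{n-1}},\mathcal{O}(1))=H^0(\Omega^1_{\mathbb P_{n-1}}(1))=0$. Hence $\theta$ vanishes on $T_{X/A}$, the fibres are Legendrian, and $\theta$ factors through $\alpha^*T_A\to L$. Pushing down by $\alpha_*$ yields a morphism $u\colon T_A\to\alpha_*L=E\otimes M$ of rank-$n$ bundles, and non-degeneracy of the contact form forces $u$ to be an isomorphism. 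Therefore $E\cong T_A\otimes M^{-1}$ and
\[
X=\mathbb P(E)=\mathbb P(T_A\otimes M^{-1})=\mathbb P(T_A)=A\times\mathbb P_{n-1},
\]
the last equality because $T_A\cong\mathcal{O}_A^{\,n}$ is trivial.

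The adjunction computation and the final bundle identification are routine; the main obstacle is the middle step. One must justify that the Albanese fibres are genuinely Fano (so that $L|_{X_a}$ is ample, not merely a multiple of some generator) and that \cite{KPSW00}, Proposition 2.11 really applies to the Albanese bundle in the K\"ahler, non-projective setting --- precisely the two points that were available for free in the homogeneous case through Borel--Remmert, but here must be extracted from almost homogeneity and the contact condition. I would also double-check that the passage from a Legendrian $\mathbb P_{n-1}$-fibration to ``$E\cong T_A$ up to twist'', standard over a projective base, goes through verbatim over a possibly non-algebraic torus.
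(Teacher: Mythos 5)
Your proof is correct and takes essentially the same route as the paper: both arguments identify the Albanese fibres as projective spaces by invoking Proposition 2.11 of \cite{KPSW00} (whose validity without projectivity of the total space is exactly the point the paper also asserts), and then recover the bundle as $T_A$ --- your Legendrian-fibre/pushforward argument producing the isomorphism $T_A \to \alpha_*(L)$ is precisely the ``last part of the proof of Theorem 2.12'' of \cite{KPSW00} that the paper cites. The only differences are expository: you derive the Fano property of the fibre via adjunction with the trivial normal bundle and write out in full the two steps the paper handles by citation.
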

\begin{proof}
Since $b_2(X_a) =1$, then $X_a$ (being uniruled) is a Fano manifold. We may therefore apply Proposition 2.11 of \cite{KPSW00} (which works perfectly in our situation) to conclude 
that $\alpha\colon X \to A$ is a $\mathbb P_n$-bundle. The proof of Theorem 2.12 in \cite{KPSW00} can now be adapted to conclude that $X \cong \mathbb P (T_A)$. To be more specific, we already know in our
situation that $X = \bP(\sE)$ with $\sE = \alpha_*(L).$ The only thing to be verified is the isomorphism $\sE \simeq T_A.$ But this is seen as in the last part of the proof of Theorem 2.12 in
\cite{KPSW00}, since section 2.1 of \cite{KPSW00} works on any manifold. \\
So $X \simeq \bP(T_A)$ and $X \cong \mathbb P_n \times A$.  
\end{proof}

It remains to study the case where the fiber $X_a$ fulfills  $b_2(X_a) \geq 2$. In this case we consider a relative Mori contraction (over $A$; the projection is a projective morphism, \cite{Na87}, (4.12))
\[
\varphi\colon X \to Y.
\]
\begin{lemma}
We have $\dim X > \dim Y$. 
\end{lemma}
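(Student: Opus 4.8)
The plan is to argue by contradiction, using the contact structure to forbid a birational contraction. Throughout write $\dim X = 2n-1$, so that in the contact sequence $0 \to F \to T_X \to L \to 0$ the subbundle has $\rk F = 2n-2$, and the non-degeneracy of $\bigwedge^2 F \to L$ gives $\det F = L^{n-1}$, whence $-K_X = \det T_X = L^{n}$. Since $\varphi$ is the contraction of a $K_X$-negative extremal ray $R$ (relative over $A$), the bundle $-K_X$ is $\varphi$-ample; hence for every curve $C$ contained in a positive-dimensional fibre of $\varphi$ we have $L \cdot C > 0$, and as $L \cdot C$ is a positive integer,
\[
-K_X \cdot C = n\,(L\cdot C) \geq n .
\]
In particular the length of the ray satisfies $\ell(R) \geq n$.

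The first main input is that the positive-dimensional fibres of $\varphi$ are \emph{isotropic} for the contact distribution. By Mori theory the fibres of $\varphi$ are covered by rational curves of the ray $R$; taking such curves of minimal degree, the analysis of Kebekus and of \cite{KPSW00} shows that these minimal rational curves on a contact manifold are tangent to the subbundle $F$. Thus at a general point of a general nontrivial fibre $F_\varphi$ the tangent space of $F_\varphi$ is spanned by directions lying in $F$ and spanning an isotropic subspace for the symplectic form $d\theta$ on $F$. Since an isotropic subspace of the $(2n-2)$-dimensional symplectic space $F$ has dimension at most $n-1$, we obtain
\[
\dim F_\varphi \leq n-1 .
\]
This is precisely the computation of \cite{KPSW00}, Proposition~2.11, whose arguments, as already remarked in this section, do not use projectivity of $X$ and apply here because the fibres $X_a$ of the Albanese map are projective and $\varphi$ is a projective morphism over $A$.

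The second input is Wi\'sniewski's inequality for Fano--Mori contractions \cite{Wi91}: if $\varphi$ were birational, then every nontrivial fibre $F_\varphi$ would satisfy $\dim F_\varphi \geq \ell(R)$. Combined with $\ell(R) \geq n$ from the first step, this forces $\dim F_\varphi \geq n$, contradicting the isotropy bound $\dim F_\varphi \leq n-1$. Hence $\varphi$ cannot be birational, that is, $\dim X > \dim Y$.

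The step I expect to be the main obstacle is the tangency/isotropy statement, since the Kebekus--KPSW description of minimal rational curves on contact manifolds was originally formulated for projective $X$. Carrying it over to the present relative K\"ahler situation is legitimate because every fibre of $\varphi$ lies in a fibre $X_a$ of the Albanese map, which is projective: Mori theory, the covering of fibres by minimal rational curves, and the tangency of these curves to $F$ are all available on $X_a$, and the numerical conclusions then hold for the projective morphism $\varphi$ over $A$. Once this is granted, the contradiction between the length bound and the isotropy bound is immediate.
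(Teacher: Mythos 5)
Your proof is correct and takes essentially the same route as the paper: the paper's proof simply states that $\varphi_a = \varphi \vert X_a$ is not birational ``by the arguments of Lemma 2.10 of \cite{KPSW00}, using the length of the contraction and the restriction of the contact line bundle to $X_a$,'' which are precisely the ingredients you unpack --- the length bound $\ell(R) \geq n$ from $-K_X = nL$, the tangency/isotropy bound $\dim F_\varphi \leq n-1$ on fibres, and Wi\'sniewski's inequality --- together with the same resolution of the K\"ahler issue, namely that the fibres of $\varphi$ lie in the projective Albanese fibres $X_a$. The only cosmetic difference is that the paper phrases the argument on the restriction $\varphi_a\colon X_a \to Y_a$, whereas you run it on $\varphi$ itself and then justify this via $X_a$.
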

\begin{proof}
 The lemma follows from the fact that the restriction map $\varphi_a = \varphi \vert X_a$ is not birational. This can be shown by the same arguments as in Lemma 2.10 of \cite{KPSW00} using the length of the contraction and 
the restriction of the contact line bundle to the fiber $X_a$. Again the projectivity of $X$ is not needed in Lemma 2.10. 
\end{proof}
As above, we may now apply Proposition 2.11 of \cite{KPSW00} and conclude that the general fiber of $\varphi$ is $\mathbb P_n$. It remains to check that $\varphi$ is a $\mathbb P_n$-bundle and $X \cong \mathbb P(T_Y)$.
This is done again as in Theorem 2.12 of \cite{KPSW00} with Fujita's result generalized to the K\"ahler setting by Lemma \ref{Kaehler}.
Also the compactness assumption in \cite{Fu85} is not necessary, this will be important later.
\begin{lemma} \label{Kaehler} Let $X$ be a complex manifold, $f\colon X \to S $ a proper surjective map to a normal complex space $S.$ Let $L$ be a relatively ample line bundle on $X$ such that
$(F,L_F) \simeq (\bP_r,\sO(1))$ for a general fiber $F$ of $f.$ If $f$ is equidimensional, then $f$ is a $\bP_r$-bundle. 
\end{lemma}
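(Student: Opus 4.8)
The plan is to show that the relatively ample line bundle $L$ gives, fiberwise, the embedding of each fiber as $\bP_r$ in its own linear system, and that these embeddings glue to a global projective bundle structure. First I would establish that the formation of the direct image $\E := f_*L$ behaves well. Since $(F,L_F) \simeq (\bP_r, \sO(1))$ on a general fiber, we have $h^0(F,L_F) = r+1$ and $h^i(F,L_F) = 0$ for $i > 0$; by semicontinuity and the equidimensionality hypothesis, I expect $h^0(X_s, L_{X_s}) = r+1$ to hold on every fiber $X_s = f^{-1}(s)$, so that $\E$ is locally free of rank $r+1$ and commutes with base change (Grauert's theorem, which is valid for proper maps to a normal complex space and does not require projectivity of $X$). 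The natural evaluation map $f^*\E \to L$ is then surjective, since on each fiber it realizes the complete linear system $|L_{X_s}|$ which is base-point free (being $|\sO(1)|$ on $\bP_r$). This surjection induces a morphism $\Phi\colon X \to \bP(\E)$ over $S$ with $\Phi^*\sO_{\bP(\E)}(1) \simeq L$.

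Next I would verify that $\Phi$ is an isomorphism. Because $\Phi$ is a morphism over $S$ and restricts on each fiber $X_s$ to the map given by the complete linear system $|L_{X_s}|$, it suffices to check that this restriction $\Phi_s\colon X_s \to \bP(\E \otimes k(s)) = \bP^r$ is an isomorphism for every $s$. On the general fiber this is clear, as $|\sO(1)|$ embeds $\bP_r$ as the identity. The content is to control the special fibers: here the equidimensionality assumption is essential, since it prevents the fiber dimension from jumping and forces each $X_s$ to map finitely, hence (by degree and the base-point-freeness established above) isomorphically, onto $\bP^r$. I would argue that $\Phi$ is finite and birational onto $\bP(\E)$, using that it is proper with finite fibers and degree one, and then invoke Zariski's main theorem (valid in the analytic category for proper maps to a normal space) to conclude that $\Phi$ is an isomorphism. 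Consequently $f = \pi \circ \Phi$ with $\pi\colon \bP(\E) \to S$ the bundle projection, exhibiting $f$ as a $\bP_r$-bundle.

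The main obstacle I anticipate is the base-change and constancy statement for $h^0(X_s, L_{X_s})$ on the \emph{special} fibers, together with ruling out that some $X_s$ could fail to be $\bP^r$. Over a field one would lean on flatness and cohomology-and-base-change; in the present K\"ahler/complex-analytic setting, with $S$ only normal and $X$ not assumed projective, I would replace these by Grauert's coherence and base-change theorems and by the semicontinuity of fiber dimension. The delicate point is that a priori the special fibers need only be known to be equidimensional of dimension $r$ and to carry an ample $L_{X_s}$; to identify them with $\bP^r$ I expect to use that $L_{X_s}$ has $r+1$ sections defining a finite degree-one map to $\bP^r$, which by ampleness and normality is forced to be an isomorphism, so that $X_s \simeq \bP^r$ automatically. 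This is exactly the Kähler analogue of Fujita's criterion, and it is where the observation that the compactness hypothesis of \cite{Fu85} is unnecessary gets used.
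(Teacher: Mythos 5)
The paper itself offers no proof of Lemma~\ref{Kaehler}: it is stated as the transcription of Fujita's result (\cite{Fu85}; see also \cite{BS95}, (III.3.2.1)) to the analytic category, with the remark that the compactness hypothesis there is unnecessary. So your proposal must be measured against what a complete argument has to supply, and there it has genuine gaps --- concentrated exactly at the points where Fujita's proof does its real work, points which you flag (``I expect'', ``I would argue'') but do not resolve.

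Concretely: (1) The constancy $h^0(X_s,L_{X_s})=r+1$ cannot come from semicontinuity --- upper semicontinuity gives $h^0(X_s,L_{X_s})\ge r+1$ on special fibers, i.e.~the wrong inequality --- and Grauert's base-change theorem presupposes that $f$ is \emph{flat}, which is neither a hypothesis nor a consequence of equidimensionality here: miracle flatness requires a smooth base, whereas $S$ is only normal (a finite quotient map $\bC^2\to\bC^2/\pm1$ is equidimensional with smooth source and normal target, yet not flat). Indeed, smoothness of $S$ is part of what the lemma delivers, since a $\bP_r$-bundle with smooth total space forces the base to be smooth, so it cannot be fed into the argument. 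Without flatness you also lose constancy of the intersection number $L^r$ on fibers, so your ``degree one'' claim for $\Phi_s$ is unsupported. (2) The surjectivity of the evaluation $f^*\sE\to L$ and the identification $X_s\simeq\bP_r$ both presuppose facts about the special fibers that are precisely at issue: you appeal to ``ampleness and normality'' of $X_s$, but a special fiber is a priori only a compact complex subspace of dimension $r$ --- possibly non-normal, non-reduced, or reducible. Excluding reducible and non-reduced degenerations (via Cohen--Macaulay and degree arguments) and then identifying an irreducible reduced degree-one polarized fiber with $(\bP_r,\sO(1))$ (Fujita's $\Delta$-genus classification, \cite{Fu90}, (I.1.1)) is the mathematical content of the lemma; your outline defers exactly this to ``the K\"ahler analogue of Fujita's criterion'', which is the statement being proved, so the argument as written is circular at its core. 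The endgame is fine --- once all fibers are known to be $(\bP_r,\sO(1))$ and $f_*L$ is locally free with base change, the map to $\bP(\sE)$ is finite and birational onto a normal space, hence an isomorphism --- but the fiberwise statements \emph{are} the lemma.
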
 

In total, we obtain

\begin{theorem} Let $X$ be a compact almost homogeneous K\"ahler contact manifold, $b_2(X) \geq 2.$ Then $X = \bP(T_Y)$ with a compact K\"ahler
manifold $Y.$
\end{theorem}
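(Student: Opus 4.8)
The plan is to combine the case analysis carried out in the preceding lemmas, according to the irregularity $q(X)$ and the second Betti number of the Albanese fiber. First I would dispose of the case $q(X)=0$: here the Albanese map is trivial, so by the earlier lemma $X$ is simply-connected and projective, and the combination of \cite{KPSW00} with \cite{De02} yields directly $X \cong \bP(T_Y)$ for a projective, hence K\"ahler, manifold $Y$. This settles the theorem whenever the irregularity vanishes.

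Next I would treat $q(X)\geq 1$, where the Albanese map $\alpha\colon X \to A$ is a fiber bundle onto the Albanese torus $A$ with fiber $X_a$, by the theorem of Barth--Oeljeklaus. Here the dichotomy is governed by $b_2(X_a)$. If $b_2(X_a)=1$, the earlier lemma gives at once $X \cong \bP(T_A) = \bP_n \times A$, and since $A$ is a torus it is in particular a compact K\"ahler manifold, so the conclusion holds with $Y = A$. The remaining and genuinely delicate case is $b_2(X_a)\geq 2$, to which the bulk of the section is devoted.

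In that last case I would invoke the relative Mori contraction $\varphi\colon X \to Y$ over $A$, which exists as a projective morphism by \cite{Na87}. The earlier lemmas show that $\dim X > \dim Y$ (via the non-birationality of $\varphi|X_a$, following Lemma 2.10 of \cite{KPSW00}) and, by Proposition 2.11 of \cite{KPSW00}, that the general fiber of $\varphi$ is $\bP_n$. The crucial step is then to upgrade this to the statement that $\varphi$ is actually a $\bP_n$-bundle and that $X \cong \bP(T_Y)$; this is where Lemma \ref{Kaehler} enters, replacing Fujita's projective result by its K\"ahler analogue, so that the equidimensionality of $\varphi$ forces it to be a projective bundle. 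Following the scheme of Theorem 2.12 of \cite{KPSW00}, one identifies the bundle as $\bP(T_Y)$ using the contact line bundle and section~2.1 of \cite{KPSW00}, which is valid on arbitrary manifolds. The resulting base $Y$ is a compact K\"ahler manifold since it is the image of the K\"ahler manifold $X$ under a proper map with connected fibers.

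The main obstacle I anticipate is precisely the passage from ``general fiber $\bP_n$'' to ``honest $\bP_n$-bundle'' in the K\"ahler (non-projective) setting: the standard arguments lean on Mori theory and on Fujita's classification, both of which presuppose projectivity. The resolution is to have isolated the needed input as Lemma \ref{Kaehler}, whose hypotheses (a proper surjective $\varphi$ to a normal base, a relatively ample line bundle restricting to $(\bP_r,\sO(1))$ on the general fiber, and equidimensionality) are exactly what the relative contraction supplies; granting this lemma, the remaining identification of the bundle structure is routine and parallels \cite{KPSW00} verbatim, the key point being that all cited constructions there avoid the projectivity of the total space.
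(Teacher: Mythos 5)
Your proposal follows the paper's own proof essentially verbatim: the same decomposition into $q(X)=0$ (via \cite{KPSW00} and \cite{De02}), $q(X)\geq 1$ with $b_2(X_a)=1$ (giving $\bP(T_A)$), and $q(X)\geq 1$ with $b_2(X_a)\geq 2$ (relative Mori contraction over the Albanese torus, Lemma 2.10 and Proposition 2.11 of \cite{KPSW00}, then Theorem 2.12 with Fujita's result replaced by the K\"ahler Lemma~\ref{Kaehler}). The approach and the key inputs coincide with the paper's, so the proposal is correct as an assembly of the section's lemmas.
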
 

The arguments above actually also show the following.

\begin{theorem} Let $X$ be a compact K\"ahler contact manifold. Let $\phi\colon X \to Y$ be a surjective map with connected fibers
such that $-K_X$ is $\phi$-ample and such that $\rho(X/Y) = 1 $ (we do not require the normal variety $Y$ to be K\"ahler). 
Then $Y$ is smooth and $X = \bP(T_Y).$ 
\end{theorem}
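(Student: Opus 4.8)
The plan is to run, for the given contraction $\phi$, exactly the three-step argument that proves the previous theorems of this section, using the contact structure to supply the equidimensionality that would otherwise be read off from Mori theory. Throughout I write the contact structure as $0 \to F \to T_X \to L \to 0$ and put $\dim X = 2n-1$, so that $F$ has rank $2n-2$, the pairing $\bigwedge^2 F \to L$ is everywhere non-degenerate, and hence $\det F = L^{n-1}$ and $-K_X = \det T_X = nL$. Since $-K_X$ is $\phi$-ample, so is $L$.

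First I would show that $\phi$ is of fibre type, i.e. $\dim X > \dim Y$. For any integral curve $C$ contracted by $\phi$ we have $-K_X\cdot C = n\,(L\cdot C) \geq n$, because $L\cdot C$ is a positive integer; thus the length of the contraction is at least $n$. If $\phi$ were birational, this lower bound contradicts the dimension estimates for the exceptional locus precisely as in Lemma 2.10 of \cite{KPSW00}; that argument uses only the restriction of the contact line bundle to the fibres and needs neither projectivity of $X$ nor of $Y$. Hence $\phi$ is of fibre type, and by Proposition 2.11 of \cite{KPSW00} the general fibre is a projective space $\bP_{n-1}$ with $L|_{\bP_{n-1}} \simeq \sO(1)$ and $-K_X|_{\bP_{n-1}} \simeq \sO(n)$; in particular the length equals $n$.

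The heart of the matter is the equidimensionality of $\phi$, and this is where the contact structure is decisive. On the general fibre the composite $T_{\bP_{n-1}} \hookrightarrow T_X|_{\bP_{n-1}} \to L|_{\bP_{n-1}}$ is a section of $\Hom(T_{\bP_{n-1}},\sO(1)) = H^0(\bP_{n-1},\Omega^1(1)) = 0$, so $T_{\bP_{n-1}} \subseteq F|_{\bP_{n-1}}$; that is, the fibre is an integral submanifold of the contact distribution. Because the distribution is maximally non-integrable, any such integral submanifold is automatically isotropic for $\bigwedge^2 F \to L$ (if $v,w$ are tangent to the fibre, so is $[v,w]$, whence $[v,w] \bmod F = 0$), and an isotropic subspace of $F$ has dimension at most $n-1$. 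Combined with the lower bound $\dim \geq \ell-1 = n-1$ for every fibre, all fibres of $\phi$ have dimension exactly $n-1$, so $\phi$ is equidimensional. I then apply Lemma \ref{Kaehler} — whose non-compact, possibly non-K\"ahler base is exactly what is needed here — to conclude that $\phi$ is a $\bP_{n-1}$-bundle and that $Y$ is smooth. Writing $X = \bP(\sE)$ with $\sE = \phi_* L$, the contact form then identifies $\sE$ with $T_Y$ by the computation of section 2.1 of \cite{KPSW00} (as in the proof of Theorem 2.12 there), giving $X \simeq \bP(T_Y)$.

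The main obstacle is propagating the isotropy (horizontality) property from the general smooth fibre to \emph{every} fibre: the vanishing $H^0(\bP_{n-1},\Omega^1(1)) = 0$ only controls smooth fibres directly, so the upper bound $\dim \leq n-1$ on fibre dimension must be argued for all fibres before Lemma \ref{Kaehler} can be invoked. I expect to settle this exactly as in Theorem 2.12 of \cite{KPSW00}: the fibres are swept out by the minimal rational curves (the lines of the general $\bP_{n-1}$), these curves are tangent to and hence isotropic for the contact distribution, and therefore no fibre can exceed the isotropic bound $n-1$. Once this bound is in place everything else is formal, and the absence of a K\"ahler hypothesis on $Y$ is absorbed entirely into Lemma \ref{Kaehler}.
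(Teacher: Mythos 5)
Your proposal is correct and takes essentially the same route as the paper: the paper's proof consists precisely of noting that the earlier arguments apply, namely the fibre-type argument via the length of the contraction (Lemma 2.10 of \cite{KPSW00}), Proposition 2.11 of \cite{KPSW00} to identify the general fibre as $\bP_{n-1}$, and the equidimensionality and bundle-structure argument of Theorem 2.12 of \cite{KPSW00}, with Fujita's result replaced by Lemma~\ref{Kaehler} to handle the non-compact, possibly non-K\"ahler base. The additional detail you supply (tangency of the general fibre to the distribution via $H^0(\Omega^1_{\bP_{n-1}}(1))=0$, the bracket/isotropy bound, and the propagation to all fibres by lines) is exactly the content of the steps of \cite{KPSW00} that both you and the paper cite.
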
 

One might wonder whether this is still true when $X$ is Moishezon or bimeromorphic to a K\"ahler manifold. Although there is
no apparent reason why the theorem should not hold in this context, at least the methods of proof completely fail. 
More generally, also the assumption that $X$ is almost homogeneous should be unnecessary. If $X$ is still K\"ahler, a Mori theory in the
non-algebraic case seems unavoidable. Already the question whether $X$ is uniruled is hard.

\subsection{Conclusion and open questions}
(1) In all but one case we find that a compact almost homogeneous K\"ahler contact manifold
 $X$ has the structure of a projectivised tangent bundle. The remaining case 
where $q(X)=0$ and $b_2(X) =1$ is discussed in Remark~\ref{bea}.  \\
(2) Can one classify all $Y$ (necessarily almost homogeneous) such that $\mathbb P(T_Y)$ is almost homogeneous? 
The case where $\dim Y= 2$ will be treated in the next section. One might also expect that if 
$Y = G/P$, then $X$  should be almost homogeneous. In case $Y$ is a Grassmannian or a quadric, this has been checked by Goldstein \cite{Go83}. 
Of course, if $Y = \bP_n,$ then $X$ is even homogeneous. 

\section{Almost homogeneous contact threefolds} 

In this section we specialize to almost homogeneous contact manifolds in dimension $3$. 

\begin{theorem} \label{list}

Let $X$ be a smooth compact K\"ahler threefold which is of the form $X = \bP(T_Y)$ for some compact (K\"ahler)
surface $Y$. 
\begin{enumerate} 
\item If $X$ is almost homogeneous, then
$Y$ is a minimal surface or a blow-up of $\bP_2$ or  $Y = \bF_n = \bP(\sO_{\bP_1} \oplus \sO_{\bP_1}(-n))$ for some $n \geq 0$, $n\ne 1$
\item If $Y$ is minimal, then $X$ is almost homogeneous if and only if $Y$ is one of the following surfaces.
\begin{itemize} 
\item $Y = \bP_2$
\item $Y = \bF_n $ for some $n \geq 0$, $n\ne 1$
\item $Y$ is a torus
\item $Y = \bP(\sE)$ with $\sE$ a vector bundle of rank 2 over an elliptic curve which is either a direct sum 
of two topologically trivial line bundles or the non-split extension of two trivial line bundles.
\end{itemize} 
\end{enumerate} 

\end{theorem}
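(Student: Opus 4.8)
The plan is to reduce everything to a statement about a single fibre of $\pi\colon X=\bP(T_Y)\to Y$. Pushing the relative Euler sequence forward exactly as in the example above identifies $\pi_*T_{X/Y}$ with the trace-free endomorphisms $\mathrm{End}_0(T_Y)=\mathrm{End}(T_Y)/\sO_Y$. Since every automorphism of $Y$ lifts canonically to $X$, the projection $H^0(T_X)\to H^0(T_Y)$ is surjective; the connecting map therefore vanishes, the relative tangent sequence splits on sections, and $h^0(T_X)=h^0(T_Y)+h^0(\mathrm{End}_0(T_Y))$. Geometrically this says $\Aut^0(Y)\hookrightarrow\Aut^0(X)$, so $X$ is almost homogeneous if and only if $Y$ is almost homogeneous \emph{and} at a general point $y$ the stabiliser of $y$ in $\Aut^0(X)$ acts with an open orbit on the fibre $\bP(T_{Y,y})\cong\bP_1$. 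The horizontal directions are supplied by the lifts of $H^0(T_Y)$, so the whole problem is to exhibit, in each case, one infinitesimal symmetry that rotates the tangent direction at a general point. The lemma above already gives the trivial implication: if $X$ is almost homogeneous, so is $Y$.

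For part (1) I would invoke the classification of almost homogeneous compact K\"ahler surfaces, applied to $Y$. The only extra observation needed is that a non-minimal such surface must be rational: if $Y$ is a torus or a geometrically ruled surface over an elliptic curve, then $\Aut^0(Y)$ contains the translations of the base and hence acts without fixed points, so no point can be blown up $\Aut^0$-equivariantly and $Y$ is forced to be minimal. Consequently a non-minimal $Y$ blows down to $\bP_2$, i.e.\ $Y$ is a blow-up of $\bP_2$ (with $\bF_1$ appearing as the one-point blow-up). Together with the minimal cases and the minimal Hirzebruch surfaces $\bF_n$ ($n\neq 1$) this is exactly the asserted trichotomy.

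For part (2) assume $Y$ minimal. The forward implication follows from the lemma above together with the classification of minimal almost homogeneous K\"ahler surfaces: in Kodaira dimension $-\infty$ these are $\bP_2$, the minimal $\bF_n$ ($n\neq 1$) and the geometrically ruled surfaces over an elliptic curve $E$, and a direct computation of the automorphism group shows the latter are almost homogeneous precisely when $\sE$ is a sum of two degree-zero line bundles or the non-split (Atiyah) extension of $\sO_E$ by $\sO_E$; in Kodaira dimension $\geq 0$ only the torus carries enough vector fields. This yields exactly the stated list. For the converse I verify the fibre criterion case by case. For $Y=\bP_2$ and $Y$ a torus, $X$ is even homogeneous by Proposition~\ref{hom}. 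For $Y=\bF_0=\bP_1\times\bP_1$ the splitting $T_Y=p_1^*T_{\bP_1}\oplus p_2^*T_{\bP_1}$ gives a non-scalar element of $H^0(\mathrm{End}_0(T_Y))$ acting on each fibre as an open $\bC^*$-orbit, while for $\bF_n$ with $n\geq 1$ a unipotent automorphism fixing a general point linearises to a non-zero nilpotent endomorphism of $T_{Y,y}$, again open on the fibre. For the elliptic ruled cases the fibrewise $\bC^*$- (resp.\ unipotent) action furnishes a section $\xi\in H^0(T_{Y/E})$ nonvanishing on the general fibre, and the composition $T_Y\twoheadrightarrow\rho^*T_E=\sO_Y\xrightarrow{\ \xi\ }T_{Y/E}\hookrightarrow T_Y$ produces a non-zero nilpotent $N\in H^0(\mathrm{End}_0(T_Y))$ acting with open orbit on a general fibre. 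In every case the stabiliser acts with open orbit, so $X$ is almost homogeneous.

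I expect the main obstacle to lie in the elliptic ruled cases of the converse. There $\Aut^0(Y)$ acts freely at the general point, so the isotropy contributes nothing on the fibre and the symmetry must instead be extracted from the endomorphism bundle: one has to show $H^0(\mathrm{End}_0(T_Y))\neq 0$ and that the resulting endomorphism $N$ acts non-trivially on the general fibre, equivalently that the underlying section of $T_{Y/E}$ does not vanish identically along it. Pinning this down (and separating the decomposable from the Atiyah case) is the delicate point; the rational cases, by contrast, are essentially forced by Proposition~\ref{hom} and the explicit automorphisms of $\bP_2$ and $\bF_n$.
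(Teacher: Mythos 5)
Your proposal is correct in substance and follows the paper's skeleton: both you and the paper deduce part (1) and the forward half of part (2) from Potters' classification \cite{Po68} via the implication ``$X$ almost homogeneous $\Rightarrow$ $Y$ almost homogeneous'', and both prove the converse of part (2) by checking the minimal surfaces in the list one by one. Where you differ is the mechanism of the case-checking. The paper works with a single sufficient condition, namely \eqref{reltang}: $H^0(T_{X/Y})\simeq H^0(S^2T_Y\otimes K_Y)\ne0$, i.e.\ the existence of a global \emph{vertical} vector field, which combined with the surjectivity of $H^0(T_X)\to H^0(T_Y)$ (obtained there from the vanishing of the connecting map, in your text from functorial lifting of automorphisms; both are fine) spans $T_X$ generically; it then verifies \eqref{reltang} by splitting tangent sequences and, for $\bF_n$, by computing $p_*\bigl(T_{Y/\bP_1}^{\otimes2}\otimes K_Y\bigr)$. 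Your criterion is finer, since it also admits isotropy: fields on $Y$ vanishing at a general $y$, acting on $\bP(T_{Y,y})$ through their linearisation. Note that $\mathrm{End}_0(T_Y)\simeq S^2T_Y\otimes K_Y$ because $T_Y$ has rank $2$, so your constructions for $\bF_0$ and for the elliptic ruled surfaces (the non-scalar endomorphism from the splitting, respectively the nilpotent composition $T_Y\to\sO_Y\to T_{Y/E}\hookrightarrow T_Y$) are exactly verifications of \eqref{reltang} in different language; only for $\bF_n$, $n\ge2$, do you take a genuinely different route, replacing the paper's pushforward computation by the unipotent shear fixing a general point. Your route buys geometric transparency and avoids cohomology; the paper's buys uniformity, one condition checked by standard exact-sequence computations.

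One step of yours is, however, incorrectly argued. In part (1) you claim that tori and elliptic ruled surfaces admit no almost homogeneous blow-up because $\Aut^0(Y)$ ``acts without fixed points''. That principle is false: $\Aut^0(\bF_n)$ also has no point fixed by the whole group, and yet blowing up a point on the negative section of $\bF_n$ does produce an almost homogeneous surface, because the stabiliser of such a point still acts with an open orbit. The correct reason in your two cases is that the identity component of the stabiliser of any point consists of fibre-preserving automorphisms (respectively is trivial for a torus), so its orbits are never open. Relatedly, the jump ``non-minimal rational $\Rightarrow$ blow-up of $\bP_2$'' overlooks the almost homogeneous blow-ups of $\bF_n$ occurring in Potters' list. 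Since the paper itself disposes of part (1) simply by quoting Potters, this flaw does not derail your overall argument, but as written that justification should be deleted in favour of a direct appeal to the classification.
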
 

\begin{proof} Suppose $X$ is almost homogeneous. Then $Y$ is almost homogeneous, too (Lemma 3.1). By Potters' classification 
\cite{Po68}, $Y$ is one of the following.
\begin{enumerate} 
\item $Y = \bP_2$
\item $Y = \bF_n = \bP(\sO_{\bP_1} \oplus \sO_{\bP_1}(-n))$ for some $n \geq 0$, $n\ne 1$
\item $Y$ is a torus
\item $Y = \bP(\sE)$ with $\sE$ a vector bundle of rank 2 over an elliptic curve which is either a direct sum 
of two topologically trivial line bundles or the non-split extension of two trivial line bundles
\item $Y$ is a certain blow-up of $\bP_2$ or of $\bF_n.$
\end{enumerate} 

This already shows the first claim of the theorem, and
it suffices to assume $Y$ to be a minimal surface of the list and to check whether $X = \bP(T_Y)$ is almost homogeneous. 
In cases (1) and (3) this is clear; $X$ is even homogeneous. \\
To proceed further, consider the tangent bundle sequence
\[
0 \to T_{X/Y} \to T_X \to \pi^*(T_Y) \to 0.
\]
Notice
\[
h^0(T_{X/Y}) = h^0(-K_{X/Y}) = h^0(S^2T_Y \otimes K_Y).
\]
Applying $\pi_*$ and observing that the connecting morphism
\[
T_Y \to R^1\pi_*(T_{X/Y})
\]
(induced by the Kodaira-Spencer maps) vanishes since $\pi$ is locally trivial,
it follows that 
\[
H^0(X,T_X) \to H^0(X,\pi^*(T_Y)) = H^0(Y,T_Y)
\]
is surjective. If therefore
\begin{equation}\label{reltang}
H^0(X,T_{X/Y}) \simeq H^0(Y,S^2T_Y \otimes K_Y) \ne 0, \tag{$*$}
\end{equation}
the tangent bundle $T_X$ is obviously spanned and therefore $X$ is almost homogeneous. \\
In case (4), \eqref{reltang} is now easily verified: 
Let $p\colon \bP(\sE) \to C$ be the $\bP_1$-fibration over the elliptic curve $C$. The tangent bundle sequence reads
\[
0 \to -K_Y \to T_Y \to \sO_Y \to 0.
\]
Since $T_Y$ is generically spanned, the map $H^0(\sO_Y) \to H^1(-K_Y)$ must vanish, so that the sequence splits:
\[
T_Y \simeq -K_Y \oplus \sO_Y.
\]
Thus $S^2T_Y \otimes K_Y \simeq -K_Y \oplus \sO_Y \oplus K_Y $ and ($*$) follows. 
\vskip .2cm 
Now if $Y=\bF_n$ as in (2), let $p\colon Y\to\bP_1$ be the natural projection. The relative tangent sequence then reads
\begin{equation}\label{tangseq}
0 \to T_{Y/\bP_1}\to T_Y\to p^*\sO_{\bP_1}(2)\to0. \tag{$**$}
\end{equation}
Taking the second symmetric power and tensorizing with $K_Y$ yields
\[
0 \to T_Y \otimes T_{Y/\bP_1} \otimes K_Y \to S^2T_Y \otimes K_Y \to p^*\sO_{\bP_1}(4)\otimes K_Y \to 0,
\]
so, by~\eqref{tangseq}, we obtain an inclusion
\[
H^0(T_{Y/\bP_1}^{\otimes2}\otimes K_Y)\subset H^0(S^2T_Y\otimes K_Y).
\]
Now by the relative Euler sequence, $T_{Y/\bP_1} \simeq \sO_Y(2) \otimes p^*\sO_{\bP_1}(n)$, and thus
\[
H^0(T_{Y/\bP_1}^{\otimes2}\otimes K_Y) \simeq H^0(\sO_Y(2) \otimes p^*\sO_{\bP_1}(n-2)).
\]
Now since
\[
p_*(\sO_Y(2)\otimes p^*\sO_{\bP_1}(n-2)) \simeq \sO_{\bP_1}(n-2) \oplus \sO_{\bP_1}(-2) \oplus \sO_{\bP_1}(-n-2),
\]
we have shown \eqref{reltang} to be true for $n\ge2$. If $n=0$, i.e., $Y\simeq\bP_1\times\bP_1$, the sequence~\eqref{tangseq} splits and an easy calculation shows that~\eqref{reltang} is satisfied also in this case.
\end{proof} 

\begin{remark} {\rm  The case that $Y$ is a non-minimal rational surface in Theorem~\ref{list} could be further studied, but this is a rather tedious task. } 
\end{remark}


\section {Deformations I: the rational case}

We consider a family $\pi\colon \mathcal X \to \Delta $ of compact manifolds over the unit disc $\Delta \subset \KC$. As usual, we let $X_t = \pi^{-1}(t).$ 
We shall assume $X_t$ to be a projective manifold for {\it}all $t$, so we are only interested in projective families here. If now $X_t$ is a contact 
manifold for $t \ne 0,$ when is $X_0$ still a
contact manifold? \\
If $b_2(X_t) = 1,$ there is a counterexample due to \cite{PP10}, see also \cite{Hw10}. Here the $X_t$ are $7$-dimen\-sional
rational-homogeneous contact manifolds and $X_0$ is a non-homogeous non-contact manifold. If one believes that any Fano 
contact manifold with $b_2 = 1$ is rational-homogeneous, then due to the results of Hwang and Mok, this is the only example
where a limit of contact manifolds with $b_2 = 1$ is not contact. \\
If $b_2(X_t) \geq 2$, it is no longer true that the limit $X_0$ is always a contact manifold, as can be seen from the following example: We let $\sY\to\Delta$ be a family of compact manifolds 
such that $Y_t\simeq\bP_1\times\bP_1$ for $t\ne0$ and $Y_0\simeq \bF_2$. Then there exist line bundles $\sL_1$ and $\sL_2$ on $\sY$ such that $\sL_1|Y_t\simeq\sO_{\bP_1\times\bP_1}(2,0)$ 
and $\sL_2|Y_t\simeq\sO_{\bP_1\times\bP_1}(0,2)$ for every $t\ne 0$. If we let $\sX:=\bP(\sL_1\oplus\sL_2)$, then $X_t\simeq\bP(T_{Y_t})$ for $t\ne 0$, but $X_0\not\simeq\bP(T_{Y_0})$. \\
However $\bP(T_{\bP_1 \times \bP_1}) $ is not homogeneous; in fact by Proposition~\ref{hom}, $ \bP(T_{\bP_n})$  is the only homogeneous rational contact manifold with $b_2 \geq 2.$ In this prominent case 
we prove global projective rigidity, i.e., $X_0 =  \bP(T_{\bP_n})$, unless $X_0$ is the projectivization of some unstable bundle, so that 
both contact structures survive in the limit. In the ``unstable case'', the contact structure does not survive. The special 
case where $X_0$ is Fano is due to Wi\'sniewski \cite{Wi91}; here global rigiditiy always holds. 
\vskip .2cm There is a slightly different point of view, asking whether projective limits of rational-homoge\-neous manifolds are
again rational-homogeneous. As before, if $b_2(X_t) = 1, $ this is true by the results of Hwang and Mok with the $7$-dimensional
exception. In case $b_2(X_t) \geq 2$, this is false in general (e.g.~for $\bP_1 \times \bP_1$), but the picture under which circumstances 
global rigidity is still true is completely open.


\begin{theorem} \label{uniruled} Let $\pi\colon \mathcal X \to \Delta $ be a family of compact manifolds. Assume $X_t \simeq
\bP(T_{\bP_n})$ for $t \ne 0.$ If $X_0$ is projective, then either $X_0 \simeq  \bP(T_{\bP_n})$ or $X_0 \simeq \bP(V)$ with some unstable vector
bundle $V$ on $\bP_n.$ 
\end{theorem}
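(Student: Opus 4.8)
The plan is to transport the two $\bP_{n-1}$-bundle structures of $\bP(T_{\bP_n})$ across the family and to show that at least one of them survives on $X_0$, realising $X_0$ as a projectivised bundle over $\bP_n$; the stable/unstable dichotomy then falls out of the rigidity of $T_{\bP_n}$.

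First I would record the topology. By Ehresmann the total space is $C^\infty$-locally trivial, so $X_0$ is diffeomorphic to $\bP(T_{\bP_n})$; in particular $X_0$ is simply connected, $b_2(X_0) = 2$ and $\Pic(X_0) \cong H^2(X_0,\bZ) \cong \bZ^2$. Since $X_t$ is Fano for $t \ne 0$ we have $h^1(\sO_{X_t}) = h^2(\sO_{X_t}) = 0$, and upper semicontinuity gives the same vanishing on $X_0$; hence the relative Picard sheaf is locally constant and, after shrinking $\Delta$, the tautological bundle $\sO_{X_t}(1)$ and the pullback $\pi_t^*\sO_{\bP_n}(1)$ extend to line bundles $\sL,\sM$ on $\sX$. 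Writing $L_0 = \sL|_{X_0}$ and $M_0 = \sM|_{X_0}$, these form a basis of $\Pic(X_0)$. Because $K_{X_t} = (K_{\sX/\Delta})|_{X_t} = -n\,\sO_{X_t}(1)$ (the flag-variety relation $-K_{\bP(T_{\bP_n})} = n\xi$), deformation invariance of the relative canonical bundle yields $-K_{X_0} = nL_0$.

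Next I would produce the bundle structure. As a limit of rationally connected manifolds, $X_0$ is uniruled (indeed rationally connected), so $K_{X_0}$ is not nef and, $X_0$ being projective, the cone theorem supplies a $K_{X_0}$-negative extremal contraction $\psi\colon X_0 \to W$. The two boundary rays of $\overline{\mathrm{NE}}(X_t)$ give the classes $M_0$ and $L_0 - M_0$; I would identify the ray contracted by $\psi$ with the limit of one of the two projections and show $\psi$ has the expected numerical shape. Concretely, $-K_{X_0} = nL_0$ forces the length of a fibre-type ray to be $n$, so by Cho--Miyaoka--Shepherd-Barron the general fibre is $\bP_{n-1}$; an equidimensionality argument together with Lemma~\ref{Kaehler} then makes $\psi$ a $\bP_{n-1}$-bundle, and the deformation-invariant cohomology pins down the smooth base as $W \cong \bP_n$. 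Thus $X_0 \cong \bP(V)$ for a rank-$n$ bundle $V$ on $\bP_n$, and matching $-K_{X_0} = nL_0$ against the relative canonical formula normalises $V$ so that its Chern classes agree with those of $T_{\bP_n}$.

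Finally, the dichotomy. The bundle $V$ is a limit of $T_{\bP_n}$ sharing its Chern classes. If $V$ is semistable, then since $T_{\bP_n}$ is stable and rigid ($\Ext^1(T_{\bP_n},T_{\bP_n}) = H^1(\bP_n,\mathrm{End}\,T_{\bP_n}) = 0$), it is an isolated point of the relevant moduli space and the limit must again be $T_{\bP_n}$, giving $X_0 \cong \bP(T_{\bP_n})$; if $V$ is unstable we are in the second alternative. I expect the main obstacle to be the middle step — guaranteeing that one extremal contraction really survives on $X_0$ as a smooth $\bP_{n-1}$-bundle over $\bP_n$. The limit of a base-point-free (semiample) class need not be semiample, so the base locus of the limiting system, and the possibility that both rays degenerate into small or divisorial contractions, must be controlled; this is precisely the point at which the projectivity of $X_0$, and hence the availability of Mori theory, is indispensable, exactly as the authors emphasise.
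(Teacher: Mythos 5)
Your outline has the right skeleton---contraction on $X_0$, base $\bP_n$, hence $X_0\simeq\bP(V)$, then the semistable/unstable dichotomy (your rigidity argument for the semistable case is fine, and is in fact more detailed than what the paper says)---but the two steps that carry the actual content of the proof are missing, and you say so yourself. First, you produce the contraction by applying the cone theorem to $X_0$ \emph{in isolation} and then hope to ``identify the ray contracted by $\psi$ with the limit of one of the two projections.'' There is no mechanism in your argument for doing this: the cone of curves is not deformation-invariant, and nothing you wrote rules out the scenario you worry about at the end (the chosen ray on $X_0$ being small or divisorial). The paper avoids this entirely by working \emph{relatively}: since $K_{\sX}$ is not $\pi$-nef, Nakayama's relative cone theorem (\cite{Na87}, (4.12)) gives a Mori contraction $\Phi\colon\sX\to\sY$ over $\Delta$; by \cite{KM92}, (12.3.4) together with Wi\'sniewski's theorem on deformations of nef values (\cite{Wi91b}, (1.3), using smoothness of $\sX$, $\Delta$, $\pi$), the restriction $\phi_0=\Phi\vert X_0$ is a genuine extremal contraction; and since $\sY$ is normal with $Y_t\simeq\bP_n$ for $t\ne0$, the central fibre $Y_0$ automatically has dimension $n$, so $\phi_0$ is of fibre type. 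This relative construction is exactly the missing link between the geometry of $X_t$ and that of $X_0$.

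Second, you invoke ``an equidimensionality argument'' and then, in your closing paragraph, concede that you do not have one---but this is precisely the technical heart of the paper (its Sublemma on local freeness of $\Phi_*(\sH)\otimes\sO_{\sY}(-1)$). The paper's argument: if $F_0$ is a component of a fibre of $\phi_0$ of codimension $k<n$, then since $X_0$ is homeomorphic to $\bP(T_{\bP_n})$, Leray--Hirsch gives $\dim H^{2k}(X_0,\bQ)=k+1$ with basis $L^{k-\ell}.(\phi_0^*H)^\ell$, $0\le\ell\le k$; writing $[F_0]=\sum_\ell\alpha_\ell L^{k-\ell}.(\phi_0^*H)^\ell$ and intersecting successively with $L^{n-k-1+\ell_0}.(\phi_0^*H)^{n-\ell_0}$, the relations $[F_0].(\phi_0^*H)^{n-\ell_0}=0$ (as $F_0$ lies in a fibre and $\ell_0\le k<n$) force all $\alpha_\ell=0$, i.e.\ $[F_0]=0$, which is impossible on a projective variety. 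Note this is where projectivity of $X_0$ is actually used---not, as you suggest, merely to ``have Mori theory available.'' Two smaller inaccuracies: the identification of the base requires Fujita's classification of polarized varieties of $\Delta$-genus zero (via $h^0(\sL')\ge n+1$ by semicontinuity and $c_1(\sL')^n=1$), since $Y_0$ is a priori only a normal variety, so its cohomology is not controlled by the topology of the family; and the vanishing $h^1(\sO_{X_0})=h^2(\sO_{X_0})=0$ does not follow from upper semicontinuity (which gives inequalities in the wrong direction) but from Hodge decomposition plus the topological triviality of the family.
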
 

\begin{proof} 
Since $K_{\mathcal X}$ is not $\pi$-nef, there exists a relative Mori contraction (see \cite{Na87}, (4.12), we may shrink $\Delta$)
\[
\Phi\colon \mathcal X \to \mathcal Y
\]
over $\Delta$. 
Put $\Delta^* = \Delta \setminus \{0\}$
and $\mathcal X^* = \mathcal X \setminus X_0; \ \mathcal Y^* = \mathcal Y \setminus Y_0$.
Now $\phi_t = \Phi \vert X_t$ is a Mori contraction for any $t$ (cp.~\cite{KM92}, (12.3.4), but this is pretty clear in our simple situation), {\em unless possibly $\phi_t$ is biholomorphic}
for $t \ne 0$.

Now since $\sX$, $\Delta$ and $\pi$ are smooth, the latter case cannot occur by~\cite{Wi91b}, (1.3), so $\phi_t$ is the contraction of an extremal ray for any $t\in\Delta$. 
Let $\tau\colon \mathcal Y \to \Delta$ be the induced projection and set $Y_t = \tau^{-1}(t), $ so that
$Y_t \simeq \bP_n$ for $t \ne 0$. Since $\mathcal Y$ is normal, the normal variety $Y_0$ must also have
dimension $n.$

\vskip .2cm \noindent From the exponential sequence, Hodge decomposition and the topological triviality of the family~$\sX,$ it follows
that 
\[
{\rm Pic}(\sX) \simeq H^2(\sX,\bZ) \simeq \bZ^2
\]
and that
\[
{\rm Pic}(X_0) \simeq H^2(X_0,\bZ^2) \simeq \bZ^2.
\]
Furthermore, the restriction ${\rm Pic}(\sX) \to {\rm Pic}(X_0) $ is bijective. As an immediate consequence, 
we can write
\[
-K_{\mathcal X} = n \sH
\]
with a line bundle $\sH$ on $\mathcal X.$ Let $\sH_t = \sH \vert X_t$ 
so that $\sH_t \simeq \sO_{\bP(T_{\bP_n})}(1)$ for $t \ne 0.$ 

\begin{claim} $Y_0 \simeq \bP_n.$
\end{claim} 
In fact, by our previous considerations, there is a unique line bundle $\sL$ on $\mathcal X$ such that
\[
\sL \vert X_t = \phi_t^*(\sO_{\bP_n}(1))
\]
for $t \ne 0.$ Moreover $\sL \vert X_0 = \phi_0^*(\sL')$ with some ample line bundle $\sL'$ on $Y_0.$ 
Therefore by semi-continuity,
\[
h^0(\sL') = h^0(\sL \vert X_0) \geq n+1
\]
and 
\[
c_1(\sL')^n = 1.
\]
Hence by results of Fujita \cite{Fu90}, (I.1.1), see also \cite{BS95}, (III.3.1), we have $(Y_0,\sL') \simeq (\bP_n,\sO(1)).$

In particular we obtain

\begin{subcorollary} $\mathcal Y $ is smooth and $\mathcal Y \simeq \bP_n \times \Delta$. 
\end{subcorollary} 

\vskip .2cm \noindent
Next we notice that the general fiber of $\phi_0$ must be $\bP_{n-1}$, since it is a smooth degeneration of fibers
of $\phi_t$ (by the classical theorem of Hir\-ze\-bruch--Ko\-dai\-ra). \\
One main difficulty is that $\phi_0$ might not be equidimensional. If we know equidimensionality, we may apply [Fu85, 2.12] to conclude that
$X_0 = \bP(\sE_0)$ with a locally free sheaf $\sE_0$ on $Y_0.$  \\
We introduce the torsion free sheaf
\[
\sF = \Phi_*(\sH) \otimes \sO_{\mathcal Y}(-1).
\]
Since  
\[
\mathop{\rm codim} \Phi^{-1}({\rm Sing}(\sF)) \geq 2,
\]
the sheaf $\sF$ is actually reflexive and of course locally free outside $Y_0.$ 
In the following Sublemma we will prove that $\sF$ is actually locally free. 

\begin{sublemma} \label{sub1} $\sF$ is locally free and therefore $\sX = \bP(\sF). $
\end{sublemma} 

\begin{proof}
As explained above, it is sufficient to show that
\[
\phi_0\colon X_0 \to \bP_n
\]
is equidimensional. So let $F_0$ be an irreducible component of a fiber of~$\phi_0$. Then $F_0$ gives rise to a class
\[
[F_0] \in H^{2k}(X_0,\bQ),
\]
where we denote by~$k$ the codimension of~$F_0$ in~$X$. Obviously $k\le n$, and we must exclude the case that $k<n$.

So we assume in the following that $k<n$. Then, since $X_0$ is homeomorphic to $\bP(T_{\bP_n})$, the Leray--Hirsch theorem gives
\[
\dim H^{2k}(X_0,\bQ) = k + 1.
\]
Now if we denote by $H$ the class of a hyperplane in $\bP_n$, and by $L$ the class of an ample divisor on~$X_0$, then the classes
\begin{equation}\label{cohbasis}
L^k, L^{k-1}.(\phi_0^* H), \dots, L.(\phi_0^* H)^{k-1}, (\phi_0^* H)^k 
\end{equation}
form a basis of $H^{2k}(X_0,\bQ)$, which can be seen as follows: By the dimension formula given above, it is sufficient to show linear independency, so assume that we are given $\lambda_0$, $\dots$,  $\lambda_k\in\bQ$ such that
\begin{equation}\label{linunab}
\sum_{\ell=0}^k\lambda_\ell L^{k-\ell}.(\phi_0^*H)^\ell =0.
\end{equation}
Now let $\ell_0 \in \{0,\dots,k\}$. By induction, we assume that $\lambda_{\ell}=0$ for all $\ell<\ell_0$. Then intersecting \eqref{linunab} with
$L^{n-k-1+\ell_0}.(\phi_0^*H)^{n-\ell_0}$ yields
\[\lambda_{\ell_0} L^{n-1}.(\phi_0^*H)^{n} = 0,\]
thus $\lambda_{\ell_0} = 0$ since $L^{n-1}.(\phi_0^*H)^n >0$.

So \eqref{cohbasis} is indeed a basis of~$H^2(X_0,\bQ)$ and we can write
\begin{equation}\label{linkomb}
[F_0] = \sum_{\ell=0}^k\alpha_\ell L^{k-\ell}.(\phi_0^*H)^\ell 
\end{equation}
for some $\alpha_0$, $\dots$, $\alpha_k\in\bQ$. We now let $\ell_0\in\{0,\dots,k\}$ and assume that $\alpha_\ell=0$ for $\ell<\ell_0$. We observe that $[F_0].(\phi_0^*H)^{n-\ell_0}=0$ 
since $F_0$ is contained in a fiber of~$\phi_0$ and $\ell_0\le k < n$. Hence, intersecting \eqref{linkomb} with $L^{n-k-1+\ell_0}.(\phi_0^*H)^{n-\ell_0}$ yields
\[
0 = \alpha_{\ell_0}L^{n-1}.(\phi_0^*H)^n,
\]
so we deduce $\alpha_{\ell_0}=0$ as before. Therefore by induction, we have $[F_0]=0$, which is impossible, $X_0$ being projective.
\end{proof}

Now we set $V = \sF \vert X_0.$ If the bundle $V$ is semi-stable, then $V \simeq T_{\bP_n}$ and the theorem is settled.
\end{proof}


Suppose in Theorem~\ref{uniruled} that $X_0 \simeq \bP(V)$ with an unstable bundle $V$ (we will show in section~\ref{sec:degen} that this can indeed occur).   
Then $X_0$ does not carry a contact structure. In fact, otherwise $X_0 \simeq \bP(T_S)$ with some projective variety $S$, \cite{KPSW00}. 
Hence $X_0$ has two extremal contractions, and therefore $X_0$ is Fano. Hence $T_S$ is ample and thus $S \simeq \bP_n$ (or apply Wi\'sniewski's theorem).  
Therefore we may state the following 

\begin{corollary} 
Let $\pi\colon \mathcal X \to \Delta $ be a family of compact manifolds. Assume $X_t \simeq
\bP(T_{\bP_n})$ for $t \ne 0.$ If $X_0$ is a projective contact manifold, then $X_0 \simeq  \bP(T_{\bP_n})$. 
\end{corollary}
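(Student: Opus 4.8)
The plan is to read the corollary off Theorem~\ref{uniruled} by eliminating, under the contact hypothesis, the exceptional ``unstable'' alternative that the theorem leaves open. Since $X_0$ is projective, Theorem~\ref{uniruled} already gives that either $X_0 \simeq \bP(T_{\bP_n})$ or $X_0 \simeq \bP(V)$ for some unstable bundle $V$ on $\bP_n$. The first alternative is precisely the desired conclusion, so the entire task reduces to showing that the second alternative is incompatible with $X_0$ carrying a contact structure. In other words, the corollary is exactly the statement ``$X_0 \simeq \bP(V)$ with $V$ unstable $\Rightarrow X_0$ is not contact'', contrapositively applied.

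So I would assume for contradiction that $X_0 \simeq \bP(V)$ with $V$ unstable and that $X_0$ is a contact manifold. As established in the proof of Theorem~\ref{uniruled}, $\Pic(X_0) \simeq \bZ^2$, so $\rho(X_0) = 2$; in particular $X_0$ is projective with $b_2 \geq 2$, and the structure theorem of \cite{KPSW00} applies to the contact structure, producing a projective variety $S$ with $X_0 \simeq \bP(T_S)$. Now $X_0$ carries two projective bundle structures: the projection $\bP(V) \to \bP_n$ and the projection $\bP(T_S) \to S$. The first point to secure is that these induce two \emph{distinct} extremal contractions. Distinctness is forced by the instability of $V$: were the two projections to agree, one would get $S \simeq \bP_n$ and $V$ would differ from $T_{\bP_n}$ only by a twist with a line bundle, hence be stable, contradicting the hypothesis. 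Each projection contracts the class of a fibre and is therefore a fibre-type, and in particular $K_{X_0}$-negative, contraction. Since $\rho(X_0) = 2$, the Mori cone $\overline{NE}(X_0)$ is two-dimensional with its two extremal rays spanned exactly by these two fibre classes; both rays being $K_{X_0}$-negative, $-K_{X_0}$ is positive on $\overline{NE}(X_0)\setminus\{0\}$, so $-K_{X_0}$ is ample and $X_0$ is Fano. Writing $-K_{X_0} = n\,\sO_{\bP(T_S)}(1)$ (the relative anticanonical bundle of a projectivised tangent bundle, as already computed for the family), ampleness of $-K_{X_0}$ is equivalent to ampleness of the tautological bundle, i.e.\ to ampleness of $T_S$. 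By Mori's characterisation of projective space (or by Wi\'sniewski \cite{Wi91}), $T_S$ ample forces $S \simeq \bP_n$, whence $V \simeq T_{\bP_n}$ up to a twist, which is stable --- the desired contradiction.

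The main obstacle is the middle step: promoting the two bundle structures to two genuinely distinct extremal contractions and then exploiting $\rho(X_0) = 2$ to conclude that $-K_{X_0}$ is ample. The delicate part is the distinctness, since a priori one must exclude the possibility that the contact structure $\bP(T_S) \to S$ coincides with the given structure $\bP(V) \to \bP_n$; the resolution is exactly that such a coincidence would render $V$ stable, which is the hypothesis being contradicted. Once Fano-ness is in hand, the passage from $T_S$ ample to $S \simeq \bP_n$ is standard, and the stable/semistable case needs no further work as it is already the good conclusion furnished by Theorem~\ref{uniruled}.
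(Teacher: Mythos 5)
Your proposal is correct and follows essentially the same route as the paper: the paper also dismisses the unstable alternative by noting that a contact structure would give $X_0 \simeq \bP(T_S)$ via \cite{KPSW00}, hence two extremal contractions, hence $X_0$ Fano, hence $T_S$ ample and $S \simeq \bP_n$ (or Wi\'sniewski's theorem), contradicting instability. Your write-up merely makes explicit two points the paper leaves implicit --- that the two bundle structures must be \emph{distinct} contractions (since coincidence would force $V \simeq T_{\bP_n}\otimes L$, which is stable) and the Kleiman-type cone argument that two $K$-negative extremal rays with $\rho(X_0)=2$ force $-K_{X_0}$ ample --- both of which are filled in correctly.
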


In the situation of Theorem~\ref{uniruled}, we had two contact structures on $X_t.$ 
This phenomenon is quite unique because of the following result \cite{KPSW00}, Prop. 2.13. 

\begin{theorem} \label{unique} Let $X$ be a projective contact manifold of dimension $2n-1$ admitting two extremal rays in the 
cone of curves
$\overline{NE}(X)$.
Then
$X \simeq \bP(T_{\bP_n})$. 
\end{theorem}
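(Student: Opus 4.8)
The plan is to exploit the two extremal rays to produce two Mori contractions and to show that each of them realises $X$ as $\bP(T_{\bP_n})$. First I would record the index formula: the contact sequence $0\to F\to T_X\to L\to 0$ together with the $L$-valued symplectic form on $F$ (which gives $F\simeq F^*\otimes L$, hence $\det F=(n-1)L$) yields $-K_X=nL$, so $X$ is Fano once we know $-K_X$ is positive on both rays. Writing $R_1,R_2\subset\overline{NE}(X)$ for the two rays and $\phi_i\colon X\to Y_i$ for their contractions, I would invoke the contact-contraction theory developed above (the analogues of \cite{KPSW00}, Lemma 2.10, Proposition 2.11 and Theorem 2.12, together with Lemma~\ref{Kaehler}): each $\phi_i$ is of fibre type, equidimensional with general fibre $\bP_{n-1}$, hence a $\bP_{n-1}$-bundle, and in fact $X\simeq\bP(T_{Y_i})$ with $Y_i$ smooth of dimension $n$ and $L$ the relative hyperplane bundle $\sO_{\bP(T_{Y_i})}(1)$, restricting to $\sO(1)$ on every fibre. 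Since the two rays span $\overline{NE}(X)$ we have $\rho(X)=2$, and because $\rho(X)=\rho(Y_i)+1$ each base $Y_i$ has Picard number one; being the base of a fibre-type Mori contraction from the Fano manifold $X$, it is itself a Fano manifold of Picard number one.

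Next I would form the product morphism $\psi=(\phi_1,\phi_2)\colon X\to Y_1\times Y_2$. A curve contracted by $\psi$ would have numerical class in $R_1\cap R_2=0$, so $\psi$ is finite; as $\dim X=2n-1=\dim(Y_1\times Y_2)-1$ and $X$ is smooth, $\psi$ embeds $X$ as a divisor in $Y_1\times Y_2$. For fixed $y\in Y_1$ the fibre $\{y\}\times Y_2\cong Y_2$ meets $X$ in the $\phi_1$-fibre $F_y\cong\bP_{n-1}$, which is therefore an effective divisor on $Y_2$; as $y$ varies these divisors cover $Y_2$ and move in a family of dimension $n$. Since $L|_{F_y}=\sO_{\bP_{n-1}}(1)$ and $\phi_2|_{F_y}$ is a closed embedding, the polarised pair $(F_y,L|_{F_y})\simeq(\bP_{n-1},\sO(1))$ sits in $Y_2$ as a hyperplane-like divisor, and symmetrically for the rôles of $Y_1,Y_2$ interchanged.

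Finally I would identify the bases. The crucial point is to show that the divisor $F_y\cong\bP_{n-1}\subset Y_2$ has normal bundle $\sO_{\bP_{n-1}}(1)$; granting this, a smooth projective $n$-fold of Picard number one containing a $\bP_{n-1}$ with normal bundle $\sO(1)$ must be $\bP_n$ --- equivalently, the images in $Y_2$ of the lines in the $\phi_1$-fibres form a covering family of rational curves of anticanonical degree $n+1$, so the index of $Y_2$ equals $n+1$ and $Y_2\simeq\bP_n$ by Kobayashi--Ochiai. By symmetry $Y_1\simeq\bP_n$ as well, and then the contact structure forces $\sE_i\simeq T_{\bP_n}$, whence $X\simeq\bP(T_{\bP_n})$. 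The main obstacle is precisely this last identification: the numerical datum $-K_X=nL$ is, by the relative Euler sequence, automatically consistent with \emph{any} base $Y_i$ and does not compute its index. One must therefore extract genuine geometric input --- the normal bundle of the fibres of one contraction inside the base of the other, or equivalently the splitting type $\sO(2)\oplus\sO(1)^{\oplus(n-1)}$ of $T_{Y_i}$ along the covering family of lines --- in order to pin down $Y_i\simeq\bP_n$ and conclude.
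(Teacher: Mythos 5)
The first half of your proposal is sound and is the same reduction any proof must make: both rays are contractible, the contact structure gives $-K_X=nL$, and the machinery of \cite{KPSW00} (fibre type, general fibre $(\bP_{n-1},\sO(1))$, equidimensionality, hence $\bP_{n-1}$-bundle) applied to each ray yields $X\simeq\bP(T_{Y_i})$, $i=1,2$, with $X$ Fano. But your argument stops exactly where the real work begins, and you say so yourself: the identification $Y_i\simeq\bP_n$ rests on the claim that a $\phi_1$-fibre sits inside $Y_2$ as a divisor with normal bundle $\sO_{\bP_{n-1}}(1)$, which you only ``grant''. As you correctly observe, $-K_X=nL$ is compatible with \emph{any} base, so no bookkeeping on $X$ can produce that claim. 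Worse, the one step you do offer toward it is incorrect: finiteness of $\psi=(\phi_1,\phi_2)$ together with smoothness of $X$ does \emph{not} make $\psi$ a closed embedding --- a finite morphism onto its image can have degree $>1$, or be a non-isomorphic birational map onto a singular divisor (think of the normalization of a nodal curve) --- so you are not entitled to regard $F_y\cong\bP_{n-1}$ as an embedded divisor of $Y_2$, let alone compute its normal bundle. The gap is not cosmetic: $\bP_{n-1}\times Q_n$ and $\bP\bigl(\sO_{\bP_n}(2)\oplus\sO_{\bP_n}(1)^{\oplus(n-1)}\bigr)$ have the identical numerical profile (dimension $2n-1$, $b_2=2$, two extremal rays, $-K$ equal to $n$ times a primitive class), so whatever closes the gap must use the contact structure beyond this numerology; your text locates the needed input but does not supply it.

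For comparison: the paper does not prove this statement at all --- it quotes it as Prop.~2.13 of \cite{KPSW00}. The proof there closes precisely your gap by citation: once $X$ is known to be Fano of dimension $2n-1$ with $-K_X=nL$ and $b_2(X)\geq 2$, Wi\'sniewski's classification of Fano manifolds of large index \cite{Wi91} (cited elsewhere in this paper) leaves only the three manifolds above, and the two non-contact ones are excluded by the results you already invoked, since one of their extremal contractions fails to have general fibre $(\bP_{n-1},\sO(1))$: for $\bP_{n-1}\times Q_n$ a contraction has fibre $Q_n$, and for $\bP\bigl(\sO_{\bP_n}(2)\oplus\sO_{\bP_n}(1)^{\oplus(n-1)}\bigr)$ one contraction is divisorial. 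So either finish your route by actually proving that $\phi_2|_{F_y}$ is an embedding with image of normal bundle $\sO(1)$ --- that is genuinely the hard part, not a routine verification --- or replace the last step by the appeal to \cite{Wi91}.
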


Here is an extension of Theorem~\ref{unique} to the non-algebraic case.

\begin{theorem} Let $X$ be a compact contact K\"ahler manifold admitting two contractions $\phi_i\colon X \to Y_i$ to normal compact
K\"ahler spaces $Y_i$. This is to say
that $-K_X $ is $\phi_i$-ample and that $\rho(X/Y_i) = 1$. 
Then $X$ is projective and therefore $X = \bP(T_{\bP_n})$. 
\end{theorem}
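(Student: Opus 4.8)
The plan is to apply the preceding theorem to each of the two contractions, reduce the projectivity of $X$ to the single vanishing $H^2(X,\sO_X)=0$, and then invoke Theorem~\ref{unique}. First I would feed $\phi_1$ and $\phi_2$ separately into the preceding theorem: since $-K_X$ is $\phi_i$-ample and $\rho(X/Y_i)=1$, each $Y_i$ is smooth and $X\simeq\bP(T_{Y_i})$, so that $\phi_i$ is a $\bP_{n-1}$-bundle over an $n$-dimensional base and $\ker d\phi_{i}$ is a subbundle of $T_X$ of rank $n-1$. The two contractions being \emph{distinct}, they contract two different extremal rays $R_1\ne R_2$. Hence the morphism $\psi=(\phi_1,\phi_2)\colon X\to Y_1\times Y_2$ is finite: a positive-dimensional fibre would contain a curve $C$ with $[C]\in R_1\cap R_2=\{0\}$, which is absurd. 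In particular, at a general point $x\in X$ the differential $d\psi_x$ is injective, i.e.
\[
\ker d\phi_{1,x}\cap\ker d\phi_{2,x}=0 .
\]

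Next I would reduce projectivity to a cohomological vanishing. Since $X$ is already K\"ahler, by the Kodaira criterion it suffices to exhibit a rational K\"ahler class; and for this it is enough to show $H^2(X,\sO_X)=0$, because then $H^2(X,\bR)=H^{1,1}(X,\bR)$, so that the open K\"ahler cone meets the rational lattice $H^2(X,\bQ)$. By Hodge symmetry $H^2(X,\sO_X)\cong\overline{H^0(X,\Omega^2_X)}$, so the target becomes $H^0(X,\Omega^2_X)=0$.

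The heart of the matter is the following. As the fibres of $\phi_i$ are projective spaces $\bP_{n-1}$, which carry no holomorphic $1$- or $2$-forms, every global $2$-form on $X$ descends from either base: $H^0(X,\Omega^2_X)=\phi_i^*H^0(Y_i,\Omega^2_{Y_i})$ for $i=1,2$ (this is the standard $\bP$-bundle computation using $\phi_{i*}\Omega^1_{X/Y_i}=\phi_{i*}\Omega^2_{X/Y_i}=0$). Hence any $\alpha\in H^0(X,\Omega^2_X)$ is simultaneously a pull-back $\phi_1^*\sigma_1=\phi_2^*\sigma_2$, so $\alpha$ vanishes on $\ker d\phi_{1}$ and on $\ker d\phi_{2}$. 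At a general point $x$ the displayed transversality gives
\[
\dim\bigl(\ker d\phi_{1,x}+\ker d\phi_{2,x}\bigr)=2(n-1)=\dim X-1,
\]
and this subspace lies in the radical of the alternating form $\alpha_x$. Thus $\alpha_x$ has rank at most $1$; being alternating its rank is even, so $\alpha_x=0$. Since $x$ is general and $\alpha$ is holomorphic, $\alpha\equiv0$, which proves $H^0(X,\Omega^2_X)=0$.

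The hard part is not the final rank count, which is automatic, but setting up its two inputs rigorously: that the two contractions are genuinely distinct (so that $\psi$ is finite and the vertical distributions meet only in $0$ at the general point), and that holomorphic $2$-forms on the $\bP_{n-1}$-bundles really are pulled back from the base. Both are supplied by the bundle description $X\simeq\bP(T_{Y_i})$ furnished by the preceding theorem. Feeding the resulting vanishing $H^2(X,\sO_X)\cong H^2(Y_i,\sO_{Y_i})=0$ back through the Kodaira criterion shows $X$ is projective; as $X$ then carries two extremal rays, Theorem~\ref{unique} identifies it with $\bP(T_{\bP_n})$.
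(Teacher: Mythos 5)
Your proof is correct, and it reaches projectivity by a genuinely different route than the paper. Both arguments begin the same way: apply the preceding theorem to each contraction to get $Y_i$ smooth and $X\simeq\bP(T_{Y_i})$, and both exploit the distinctness of the two rays to get finiteness (the paper phrases it as finiteness of $\phi_1$ restricted to a fiber $F\simeq\bP_{n-1}$ of $\phi_2$, you as finiteness of $(\phi_1,\phi_2)\colon X\to Y_1\times Y_2$). From there the paper works on the \emph{base}: it forms the rational quotient $f\colon Y_1\dasharrow Z$, observes that $Y_1$ is covered by the rationally connected divisors $\phi_1(F)$, so $\dim Z\le 1$, deduces that $Y_1$ carries no holomorphic $2$-form and is therefore projective, and finally transfers projectivity to $X$ via the finite map onto its image in the projective variety $Y_1\times Y_2$ (a Moishezon-plus-K\"ahler step). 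You instead work directly on $X$: every holomorphic $2$-form is a pull-back from either base, hence its radical at a general point contains $\ker d\phi_1+\ker d\phi_2$, which by your transversality has codimension $1$, forcing the form to have odd rank $\le 1$ and hence to vanish; then Hodge symmetry plus the Kodaira criterion applied to $X$ itself gives projectivity. Your version is more self-contained, avoiding both the rational quotient (MRC) machinery in the K\"ahler category and the Moishezon-type implication, at the cost of the (routine) $\bP$-bundle computation $\phi_{i*}\Omega^2_X=\Omega^2_{Y_i}$; the paper's version yields projectivity of the bases $Y_i$ along the way, which your argument recovers only a posteriori. One cosmetic remark: your closing sentence cites the vanishing ``$H^2(X,\sO_X)\cong H^2(Y_i,\sO_{Y_i})=0$'', but what your argument actually establishes is $H^0(X,\Omega^2_X)=0$, from which $H^2(X,\sO_X)=0$ follows by Hodge symmetry; the displayed isomorphism is true for a $\bP$-bundle but plays no role in your proof.
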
 

\begin{proof} We already know by Theorem 3.13 that $X = \bP(T_{Y_i}).$ Let $F \simeq \bP_{n-1}$ be a fiber of $\phi_2$. 
Then the restriction $\phi_1 \vert F$ is finite. We claim that $Y_1$ must be projective. In fact, consider the rational quotient, 
say $f\colon Y_1 \dasharrow Z$, which is an almost holomorphic map to a compact K\"ahler manifold $Z.$ By construction, the map $f$ contracts the images
$\phi_1(F)$, hence $\dim Z \leq 1$. But then $Z$ is projective and therefore $Y_1$ is projective, too (e.g. by arguing that $y_1$ cannot carry a
holomorphic $2$-form). \\
By symmetry, $Y_2$ is projective, too. Since the morphisms $\phi_i$ induce a finite map $X \to Y_1 \times Y_2 $ (onto the image of $X$), the
variety $X$ is also projective.
\end{proof}

Any projective contact manifold $X$ with $b_2(X) \geq 2$ is of the form $X = \bP(T_Y)$. Therefore it is natural ask for generalizations of Theorem \ref{uniruled},
substituting the projective space by other projective varieties. 

\begin{proposition} \label{partial} Let $\pi\colon \sX \to \Delta$ be a projective family of compact manifolds $X_t$ of dimension $2n-1.$
Assume that $X_t  \simeq \bP(T_{Y_t}) $ for $t  \ne 0$ with (necessarily projective) manifolds $Y_t \ne \bP_n.$ Assume that
$H^q(X_t,\sO_{X_t}) = 0$ for $q = 1,2$ for some (hence all) $t$.
Then the following statements hold. 
\begin{enumerate}
\item There exists a relative 
contraction $\Phi\colon \sX \to \sY$ over $\Delta$ such that $\Phi \vert X_t$ is the given 
$\bP_{n-1}$-bundle structure for $t \ne 0$.
\item If $\phi_0 := \Phi \vert X_0$ is equidimensional, then  $X_0 \simeq \bP(\sE_0) $ with a rank-$n$ bundle $\sE$ over the projective manifold $Y_0$; 
and $Y_0$ is the limit
manifold of a family $\tau\colon \sY \to \Delta$ such that $Y_t \simeq \tau^{-1}(t) $ for $t  \ne 0$. In other words, 
$\sX \simeq \bP(\sE)$ such that $\sE = T_{\sY/\Delta} $ over $\Delta \setminus\{0\}$.
\end{enumerate} 
\end{proposition}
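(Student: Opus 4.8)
The plan is to follow the proof of Theorem~\ref{uniruled} as closely as possible, using the vanishing hypothesis $H^q(X_t,\sO_{X_t})=0$ ($q=1,2$) as a replacement for the special cohomology of $\bP(T_{\bP_n})$. First I would record its consequences for Picard groups. Since $\Delta$ is Stein and the sheaves $R^q\pi_*\sO_{\sX}$ have fibres $H^q(X_t,\sO_{X_t})=0$ of constant dimension $0$ for $q=1,2$, base change gives $H^q(\sX,\sO_{\sX})=0$ for $q=1,2$. Combined with the exponential sequence and the homotopy equivalence $\sX\simeq X_0$ (so $H^2(\sX,\bZ)\simeq H^2(X_0,\bZ)$), this yields $\Pic(\sX)\simeq H^2(\sX,\bZ)$ and, via Ehresmann's local triviality, that the restriction $\Pic(\sX)\to\Pic(X_t)$ is an isomorphism for every $t$. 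In particular the natural maps identify $N_1(X_t)\simeq N_1(\sX/\Delta)$ for all $t$, so a single extremal ray of $\overline{NE}(\sX/\Delta)$ restricts to a single ray in each $N_1(X_t)$.

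For part~(1), since $K_{\sX}$ is not $\pi$-nef (the fibres being uniruled) there is a relative Mori contraction $\Phi\colon\sX\to\sY$ over $\Delta$ by~\cite{Na87},~(4.12), contracting one extremal ray $R$. For $t\ne0$ the restriction $\phi_t=\Phi|X_t$ is not biholomorphic by~\cite{Wi91b},~(1.3) (as $\sX$, $\Delta$, $\pi$ are smooth); by adjunction $K_{X_t}=K_{\sX}|X_t$, so $\phi_t$ is a $K_{X_t}$-negative contraction of the ray $R$ viewed in $N_1(X_t)$, and $R$ is then extremal in $\overline{NE}(X_t)$. Because $Y_t\ne\bP_n$, the contact manifold $X_t=\bP(T_{Y_t})$ cannot carry two extremal rays by Theorem~\ref{unique}; hence $R$ is the fibre ray and $\phi_t$ is exactly the given bundle projection $X_t\to Y_t$. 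This proves~(1) and shows that the induced $\tau\colon\sY\to\Delta$ restricts to the family $(Y_t)$ over $\Delta^*$.

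For part~(2), assume $\phi_0=\Phi|X_0$ is equidimensional. Its general fibre is a smooth degeneration of the fibres $\bP_{n-1}$ of $\phi_t$, hence again $\bP_{n-1}$ by the theorem of Hirzebruch--Kodaira. Using the Picard isomorphism I would extend the hyperplane bundle to a line bundle $\sH$ on $\sX$ with $\sH_t\simeq\sO_{\bP(T_{Y_t})}(1)$ for $t\ne0$; since $\sH\cdot R>0$ on the generator of $R$ and $\rho(\sX/\sY)=1$, the bundle $\sH$ is $\Phi$-ample, so $\sH_0$ is $\phi_0$-ample and restricts to $\sO(1)$ on the general fibre. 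Equidimensionality then lets me invoke Lemma~\ref{Kaehler} (the K\"ahler form of~\cite{Fu85},~(2.12)): $\phi_0$ is a $\bP_{n-1}$-bundle, $Y_0$ is smooth, and $X_0\simeq\bP(\sE_0)$ with $\sE_0=(\phi_0)_*\sH_0$ locally free of rank $n$. Globally this makes $\Phi$ an equidimensional $\bP_{n-1}$-bundle, so $\sE:=\Phi_*\sH$ is locally free of rank $n$ and $\sX\simeq\bP(\sE)$. Smoothness of $\sY$ and submersivity of $\tau$ follow from $\pi=\tau\circ\Phi$ with $\pi$ and $\Phi$ both submersions, so $\tau\colon\sY\to\Delta$ is a family with central fibre $Y_0$. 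Finally, over $\Delta^*$ one has $\sE_t=(\pi_t)_*\sO_{\bP(T_{Y_t})}(1)=T_{Y_t}$, and these canonical identifications glue to $\sE|_{\sY^*}\simeq T_{\sY/\Delta}|_{\sY^*}$ (after the standard normalisation by a line bundle pulled back from $\sY^*$), giving the last assertion.

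The genuinely hard point is exactly the one assumed away, namely equidimensionality of $\phi_0$. In Theorem~\ref{uniruled} this was forced by the very special cohomology of $\bP(T_{\bP_n})$ through the Leray--Hirsch computation of Sub-Lemma~\ref{sub1} (the basis~\eqref{cohbasis} and the subsequent intersection argument), which has no analogue for a general base $Y_t$; this is precisely why it enters Proposition~\ref{partial} only as a hypothesis. A minor subtlety is verifying that $\sH$ is $\Phi$-ample at $t=0$ rather than merely positive on fibres, but this is immediate from $\rho(\sX/\sY)=1$ together with $\sH\cdot R>0$.
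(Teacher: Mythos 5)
Your proposal is correct and follows essentially the same route as the paper: a relative Mori contraction via \cite{Na87}, (4.12), the use of \cite{Wi91b}, (1.3) to exclude biholomorphic restrictions, Theorem~\ref{unique} to identify $\phi_t$ with the given $\bP_{n-1}$-bundle projection (this is where $Y_t\ne\bP_n$ enters), and a Fujita-type result under the equidimensionality hypothesis for part~(2). The only cosmetic difference is that you invoke Lemma~\ref{Kaehler} (the K\"ahler form of \cite{Fu85}, (2.12)) where the paper cites \cite{BS95}, (III.3.2.1); these are the same statement in the projective setting, and your extra detail on the Picard-group extension and the normalisation of $\sH$ merely spells out what the paper leaves implicit by referring back to the proof of Theorem~\ref{uniruled}.
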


\begin{proof}
Since $Y_t \ne \bP_n$ by assumption, every $X_t$, $t  \ne 0$, has a unique Mori contraction, 
the projection $\psi_t\colon X_t \to Y_t$, by Theorem~\ref{unique}.  
As in the proof of Theorem~\ref{uniruled}, we obtain a
relative Mori contraction 
\[
\Phi\colon \sX \to \sY
\]
over $\Delta$, and necessarily 
$\Phi \vert X_t = \phi_t$ for all $t  \ne 0$ (we use again~\cite{Wi91b}, (1.3)). This already shows Claim (1).
\vskip .2cm \noindent 
If $\phi_0$ is equidimensional, we apply---as in the proof of Theorem \ref{uniruled}---\cite{BS95}, (III.3.2.1), to conclude that 
there exists a locally free sheaf $\sE_0$ of rank $n$ on $Y_0$ such that 
$X_0 \simeq \bP(\sE_0)$, proving (2).
\end{proof} 

\begin{theorem}  Let $\pi\colon \sX \to \Delta$ be a projective family of compact manifolds $X_t$ of dimension $2n-1.$
Assume that $X_t  \simeq \bP(T_{Y_t}) $ for $t  \ne 0$ with (necessarily projective) manifolds $Y_t (\ne \bP_n).$ Assume that
$H^q(X_t,\sO_{X_t}) = 0$ for $q = 1,2$ for some (hence all) $t$. Assume moreover that 
\begin{enumerate} 
\item $\dim X_0 \leq 5,$ or
\item $b_{2j}(Y_t) = 1 $ for some $t \ne 0$ and all $1 \leq j < {n \over {2}}$. 
\end{enumerate} 
Then there exists a  relative 
contraction $\Phi\colon \sX \to \sY$ over $\Delta$ such that $\Phi \vert X_t$ is the given 
$\bP_{n-1}$-bundle structure for $t \ne 0$. Moreover there is a locally free sheaf $\sE$ on~$\sY$ such that $\sX \simeq \bP(\sE)$ and $\sE \vert Y_t \simeq T_{Y-t}$ for all $t \ne 0$. 
\end{theorem}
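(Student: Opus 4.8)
The statement is a strengthening of Proposition~\ref{partial}: part~(1) of that proposition already produces the relative contraction $\Phi\colon\sX\to\sY$, and part~(2) yields all the remaining assertions (smoothness of $Y_0$, $\sX\simeq\bP(\sE)$ with $\sE|Y_t\simeq T_{Y_t}$) the moment we know that $\phi_0:=\Phi|X_0$ is equidimensional. So the entire content of the theorem is the equidimensionality of $\phi_0$, and the plan is to establish it by the cohomological method of Sub-Lemma~\ref{sub1}, now run over a general and a priori singular base $Y_0$ instead of $\bP_n$.

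First I would fix the topological framework. Since $\pi$ is smooth and proper over the contractible disc, it is a $C^\infty$ fibre bundle, so restriction gives graded-ring isomorphisms $H^*(\sX,\bQ)\xrightarrow{\sim}H^*(X_t,\bQ)$ for every $t$; in particular $H^*(X_0,\bQ)\cong H^*(\bP(T_{Y_t}),\bQ)$ inherits the Leray--Hirsch decomposition $\bigoplus_{a=0}^{n-1}\xi^a\,\phi_t^*H^*(Y_t,\bQ)$ relative to the $\bP_{n-1}$-bundle $X_t\to Y_t$, where $\xi$ is the relative hyperplane class. As in Theorem~\ref{uniruled}, $H^q(X_t,\sO)=0$ for $q=1,2$ gives $\Pic(\sX)\xrightarrow{\sim}\Pic(X_0)$, and from $-K_\sX=n\sH$ I obtain the class $\sH$ and an ample class $L$ on $X_0$; moreover classes pulled back from $\sY$ restrict on $X_0$ to genuine pullbacks $\phi_0^*H^*(Y_0,\bQ)$, which therefore sit inside the transported horizontal part. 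Now suppose $\phi_0$ is \emph{not} equidimensional and let $F_0$ be an irreducible component of a fibre with $k:=\codim_{X_0}F_0\le n-1$. Two facts are immediate: $[F_0]\ne0$, indeed $[F_0]\cdot L^{2n-1-k}=\deg_LF_0>0$ since $X_0$ is projective; and $[F_0]\cup\phi_0^*\gamma=0$ for every $\gamma\in H^{>0}(Y_0,\bQ)$, because $\phi_0|F_0$ is constant, so $[F_0]\cup\phi_0^*\gamma=i_*\,i^*\phi_0^*\gamma=0$ for $i\colon F_0\hookrightarrow X_0$.

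The engine of the proof is the following. Suppose $[F_0]$ admits an expansion $[F_0]=\sum_{j=0}^{k}L^{k-j}\phi_0^*\beta_j$ with $\beta_j\in H^{2j}(Y_0,\bQ)$. Cupping with $\phi_0^*\gamma$ gives $\sum_j L^{k-j}\phi_0^*(\beta_j\gamma)=0$; since the $L$-graded pieces are triangular with respect to the transported $\xi$-filtration (as $L=\xi+\text{horizontal}$) and the summands $\xi^{a}\phi_t^*H^*(Y_t)$ with $a\le n-1$ are independent, I may peel off, degree by degree, to conclude $\mathrm{spec}(\beta_j\gamma)=0$ in $H^*(Y_t)$ for all $j$ and all $\gamma\in H^{>0}(Y_0)$. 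Taking $\gamma$ a polarisation power with $\mathrm{spec}(\gamma)\ne0$ forces $\beta_0=0$; and for $j\ge1$ the $j$-th term of $[F_0]\cdot L^{2n-1-k}$ equals $\int_{Y_t}\mathrm{spec}(\beta_j)\cup\phi_{t*}L_t^{2n-1-j}=\mathrm{spec}\!\big(\beta_j\cdot\phi_{0*}L^{2n-1-j}\big)=0$ by the same vanishing. Hence $[F_0]\cdot L^{2n-1-k}=0$, contradicting positivity. Thus equidimensionality follows \emph{once the expansion is justified}.

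The expansion is exactly the assertion that $[F_0]$ lies in the subalgebra generated by $L$ and $\phi_0^*H^*(Y_0)$, equivalently that the specialisation $H^{2i}(Y_0)\to H^{2i}(Y_t)$ is surjective for $0\le i\le n-1$; this is the heart of the matter and the one place where Poincaré duality, which drove the $\bP_n$ case of Sub-Lemma~\ref{sub1} and would finish the general case at once on a \emph{smooth} base, is unavailable. This is precisely the gap hypotheses~(1) and~(2) are designed to close. Under~(2), $b_{2j}(Y_t)=1$ for $1\le j<n/2$ and, by Poincaré duality on $Y_t$, for every $j\ne n/2$ as well; in these degrees $H^{2j}(Y_t)$ is spanned by a power of the relatively ample polarisation, which extends over $\sY$ and hence comes from $Y_0$. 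For $n$ odd there is no integral middle, so surjectivity holds in every degree $\le n-1$ and the expansion is complete; for $n$ even only the single middle term $k-j=n/2$ can fail to be liftable, and I would remove it by pairing: its contribution to $[F_0]\cdot L^{2n-1-k}$ is a multiple of $\int_{Y_t}\bar p\cup\mu^{n/2}$ with $\mu$ the (liftable) polarisation class, and this vanishes because $[F_0]\cup\phi_0^*(\mu^{n/2})=0$. Under~(1), $\dim X_0\le5$ forces $n\le3$, and I would instead enumerate directly the few possible jumping fibres in dimensions $3$ and $5$ (in particular divisorial components contracted to a point, which is the case where~(2) is vacuous) and exclude them by an intersection computation, using that the numbers built from $L$ and the horizontal classes are locally constant in the family. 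The main obstacle is thus isolated sharply: controlling the non-liftable, middle-degree cohomology of $Y_t$ that is invisible to the singular limit base $Y_0$. With the expansion secured and equidimensionality established, Proposition~\ref{partial}(2) delivers the locally free sheaf $\sE$ on $\sY$ with $\sX\simeq\bP(\sE)$ and $\sE|Y_t\simeq T_{Y_t}$, completing the proof.
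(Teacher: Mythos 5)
Your reduction of the theorem to the equidimensionality of $\phi_0$ via Proposition~\ref{partial}, and your plan to prove that in case (2) by rerunning Sub-Lemma~\ref{sub1}, is exactly the paper's strategy; but the proposal has a genuine gap in each of the two cases. Case (1) is not proved at all: ``enumerate the few possible jumping fibres and exclude them by an intersection computation'' is a declaration of intent, and the tool you name---local constancy of numbers built from $L$ and horizontal classes---cannot do the job, because in case (1) there is \emph{no} hypothesis on the Betti numbers of $Y_t$. Thus $b_2(Y_t)$ may be arbitrary, $H^2(X_0,\bQ)$ is not spanned by $L$ and the pullback of an ample class, and the numerical conditions satisfied by a divisor $D$ contracted by $\phi_0$ to a point ($D\cdot C=0$ for the extremal curve class, $D\cdot\phi_0^*A^j\cdot L^{2n-2-j}=0$ for $j\ge1$) are perfectly consistent with the intersection theory of $X_t=\bP(T_{Y_t})$ once $\rho(Y_t)\ge2$. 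The contradiction must use that $D$ is \emph{effective} and that $\phi_0$ is a Mori contraction: a $\phi_0$-numerically trivial divisor descends to $Y_0$, and an effective pullback cannot be irreducible and contracted to a point. This is what the paper's terse ``$\phi_0$ does not contract a divisor (the relative Picard number being $1$)'' encodes; in addition, for $\dim Y_0=3$ the paper excludes $3$-dimensional fibres by Andreatta--Wi\'sniewski \cite{AW97}, (5.1). Nothing in your proposal substitutes for these two inputs.

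In case (2) you correctly spotted a subtlety the paper glosses over: hypothesis (2) as stated only controls $b_{2j}(Y_t)$ for $j<n/2$, so for $n$ even the middle group $H^n(Y_t,\bQ)$ is uncontrolled, whereas the paper's own proof silently reads the hypothesis as $1\le j\le n/2$, under which $b_{2k}(X_t)=k+1$ for all $k<n$ and Sub-Lemma~\ref{sub1} applies verbatim (with $H$ replaced by an ample class pulled back from $Y_0$; no specialisation maps $H^*(Y_0)\to H^*(Y_t)$ are needed---the paper only uses the dimension of $H^{2k}(X_0,\bQ)$ together with classes constructed directly on $X_0$, so your liftability discussion is an unnecessary detour). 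However, your repair of the middle-degree problem fails. Write $H^n(Y_t,\bQ)=\bQ\mu^{n/2}\oplus W$ with $W$ the orthogonal complement under the intersection form. The relation $[F_0]\cup\phi_0^*(\mu^{n/2})=0$ imposes no condition on the $W$-component of $[F_0]$, since for $w\in W$ one has $\phi_t^*(w)\cup\phi_t^*(\mu^{n/2})=\phi_t^*(w\cup\mu^{n/2})=0$ automatically, precisely by orthogonality. On the other hand, the contribution of that component to $[F_0]\cdot L^{2n-1-k}$ is, up to nonzero factors, $\int_{Y_t}s_{n/2}(T_{Y_t})\cup w$, and the middle Segre class $s_{n/2}(T_{Y_t})$ need not be a multiple of $\mu^{n/2}$; as the form on $W$ is nondegenerate, this number need not vanish. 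So your pairing argument does not close the very gap you identified: one must either read the hypothesis as $j\le n/2$, as the paper's proof implicitly does, or supply a genuinely new idea for the middle degree.
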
 

\begin{proof} By the previous proposition it suffices to show that  $\phi_0 = \Phi \vert X_0$ is equidimensional. \\
(1) First suppose that $\dim X_0 \leq 5$.  Then $1 \leq \dim Y_0  \leq 3$. The case $\dim Y_0 = 1$ is trivial. If $\dim Y_0 = 2$, then
all fibers must have codimension 2, because $\phi_0$ does not contract a divisor  (the relative Picard number being $1$).  
If $\dim Y_0 = 3$, then by \cite{AW97}, (5.1), we cannot have a 3-dimensional fiber. Since again there is no 4-dimensional fiber, $\phi_0$ must be equidimensional also in this case. \\
(2) If  $b_{2j}(Y_t) = 1 $ for some $t$ and all $1 \leq j \leq {n \over {2}}$,  then $b_{2k}(X_t) = k+1$ for $k < n$ and  we may simply argue as in Sublemma~\ref{sub1} to conclude that $\phi_0$ is equidimensional 
(the smoothness of $Y_0$ is not essential in the reasoning of Sublemma~\ref{sub1}). 
\end{proof}


\section{Degenerations of $T_{\bP_n}$}
\label{sec:degen}

In view of Theorem~\ref{uniruled}, we can ask the question which bundles can occur as degenerations of~$T_{\bP_n}$, i.e., for which rank-$n$ bundles $V$ on~$\bP_n$ there exists a rank-$n$ bundle $\sV$ on $\bP_n\times\Delta$ such that
\[
\sV_t:=\sV|{\bP_n\times\{t\}}\simeq \begin{cases}
T_{\bP_n},&\text{for $t\ne0$},\\
V, &\text{for $t=0$}.
\end{cases}
\]

In the case that~$n\ge3$ is odd, it was already observed by Hwang in~\cite{Hw06} that one can easily construct a nontrivial degeneration of~$T_{\bP_n}$ as follows: We consider the null correlation bundle on $\bP_n$, which is a rank-$(n-1)$ bundle $N$ on~$\bP_n$ given by a short exact sequence
\[
0 \longrightarrow N \longrightarrow T_{\bP_n}(-1) \longrightarrow \sO_{\bP_n}(1) \longrightarrow 0.
\]
(cf.~\cite{OSS80}, (I.4.2)).
The existence of this sequence now implies that $T_{\bP_n}$ can be degenerated to~$N(1)\oplus\sO_{\bP_n}(2)$.

When $n$ is even, matters become more complicated, but we can still obtain nontrivial degenerations:
\begin{proposition}
Let $n\ge 2$. Then there exists a rank-$n$ bundle $\sV$ on $\bP_n\times\Delta$ such that $\sV_t\simeq T_{\bP_n}$ for $t\ne 0$ and $h^0(\sV_0(-2))=1$, so in particular $\sV_0\not\simeq T_{\bP_n}$.
\end{proposition}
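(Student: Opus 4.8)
The goal is to construct, for every $n \geq 2$, a rank-$n$ bundle $\sV$ on $\bP_n \times \Delta$ that restricts to $T_{\bP_n}$ for $t \neq 0$ but whose central fibre $\sV_0$ is distinguished from $T_{\bP_n}$ by the numerical condition $h^0(\sV_0(-2)) = 1$. Since $h^0(T_{\bP_n}(-2)) = 0$ (as $T_{\bP_n}(-2)$ has no sections — its twist down past the Euler sequence threshold kills all cohomology in degree $0$), producing a central fibre with a section of $\sV_0(-2)$ immediately forces $\sV_0 \not\simeq T_{\bP_n}$. The odd case already handled via the null correlation bundle suggests the right mechanism: degenerate $T_{\bP_n}$ to a bundle containing an $\sO_{\bP_n}(2)$ summand or sub-line-bundle, whose inclusion twisted by $(-2)$ yields exactly one section.

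The plan is to realize the degeneration as an elementary modification (Hecke-type modification) of the trivial family $T_{\bP_n} \times \Delta$ along the central fibre. First I would recall the Euler sequence $0 \to \sO_{\bP_n} \to \sO_{\bP_n}(1)^{\oplus(n+1)} \to T_{\bP_n} \to 0$, which exhibits a surjection $\sO_{\bP_n}(1)^{\oplus(n+1)} \to T_{\bP_n}$ and hence, after twisting, a way to produce maps out of $T_{\bP_n}$ and into its twists. Concretely, I expect to exhibit a quotient line bundle of $T_{\bP_n}$ isomorphic to $\sO_{\bP_n}(2)$: such a surjection $T_{\bP_n} \twoheadrightarrow \sO_{\bP_n}(2)$ corresponds to a nonzero element of $H^0(\Omega^1_{\bP_n}(2)) = H^0(T_{\bP_n}(-2))^\vee$-type data, but more usefully one simply notes $\Hom(T_{\bP_n}, \sO_{\bP_n}(2)) = H^0(\Omega^1_{\bP_n}(2)) \neq 0$ and picks a generically surjective map. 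I would then define $\sV$ as the kernel of the composite surjection $\sV \to T_{\bP_n}\boxtimes\sO_\Delta \to (\text{quotient supported on } X_0)$, i.e.\ the subsheaf of the trivial family $T_{\bP_n} \boxtimes \sO_\Delta$ agreeing with it off $X_0$ and modified along the central fibre so that $\sV_0$ acquires the desired $\sO_{\bP_n}(2)$-sub-line-bundle.

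More explicitly, the cleanest construction is via an extension. The short exact sequence whose existence I must verify is, on $\bP_n \times \Delta$, of the form
\[
0 \to \sV \to T_{\bP_n} \boxtimes \sO_\Delta \to \sQ \to 0,
\]
where $\sQ$ is a torsion sheaf supported on $X_0 = \bP_n \times \{0\}$ isomorphic there to a suitable twist, chosen so that $\sV_t \simeq T_{\bP_n}$ for $t \neq 0$ automatically (the modification being an isomorphism away from $X_0$) while the restriction sequence for $t = 0$ degenerates $\sV_0$ into an extension of $\sO_{\bP_n}(2)$ by a kernel bundle. The key computation is then to determine $\sV_0$ by restricting the modification sequence to $X_0$: applying $\otimes \sO_{X_0}$ produces a $\Tor$ term that precisely injects an $\sO_{\bP_n}(2)$ factor into $\sV_0$, and twisting by $\sO(-2)$ and taking global sections yields $h^0(\sV_0(-2)) = h^0(\sO_{\bP_n}) = 1$, with no further contributions because the complementary kernel bundle has no sections after the $(-2)$-twist.

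The main obstacle I anticipate is twofold. First, I must ensure that the constructed $\sV$ is locally free (not merely torsion-free or reflexive): elementary modifications along a smooth divisor of a locally free sheaf by a locally free quotient on that divisor do stay locally free, so I would arrange $\sQ$ to be a vector bundle on the smooth divisor $X_0$, using the surjection $T_{\bP_n} \to \sO_{\bP_n}(2)$ restricted appropriately. Second, and more delicately, I must pin down the isomorphism type of $\sV_0$ enough to compute $h^0(\sV_0(-2)) = 1$ exactly — showing it is at least $1$ is easy from the emergent $\sO_{\bP_n}(2)$ factor, but proving it is \emph{exactly} $1$ requires controlling $h^0$ of the complementary summand (the kernel of $T_{\bP_n} \to \sO_{\bP_n}(2)$) twisted by $\sO(-2)$, which I would handle by a direct cohomology computation on $\bP_n$ via the Euler sequence and Bott vanishing. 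The even-versus-odd distinction of the introduction enters only in whether the complement splits off cleanly, but the numerical invariant $h^0(\sV_0(-2)) = 1$ is robust and suffices for the stated conclusion $\sV_0 \not\simeq T_{\bP_n}$.
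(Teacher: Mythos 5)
Your construction works only when $n$ is odd, and in that case it is exactly Hwang's null-correlation degeneration already quoted in the paper's introduction; the substance of the proposition is the even case, and there your argument breaks at its very first step. The elementary-modification plan requires the quotient sheaf $\sQ$ on the divisor $X_0$ to be locally free (you say so yourself), and you propose to take $\sQ=\sO_{\bP_n}(2)$ via a surjection $T_{\bP_n}\twoheadrightarrow\sO_{\bP_n}(2)$. But such a surjection exists if and only if $n$ is odd. A bundle surjection $T_{\bP_n}\to\sO_{\bP_n}(2)$ is the same as a nowhere-vanishing section of $\Omega^1_{\bP_n}(2)$; now $H^0(\Omega^1_{\bP_n}(2))$ is the kernel of $H^0(\sO_{\bP_n}(1))^{\oplus(n+1)}\to H^0(\sO_{\bP_n}(2))$, i.e.\ the space of skew-symmetric $(n+1)\times(n+1)$ matrices $A$, and the zero locus of the section attached to $A$ is $\bP(\ker A)$. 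When $n$ is even, $n+1$ is odd, so every skew-symmetric $A$ is singular and every section of $\Omega^1_{\bP_n}(2)$ vanishes somewhere; equivalently $c_n(\Omega^1_{\bP_n}(2))=1\neq 0$, as one computes from $c(\Omega^1_{\bP_n}(2))=(1+h)^{n+1}/(1+2h)$. Hence for even $n$ you only have generically surjective maps $T_{\bP_n}\to\sO_{\bP_n}(2)$, with image $\sI_Z(2)$ for a nonempty finite $Z$, and the kernel $\sV$ of $T_{\bP_n}\boxtimes\sO_\Delta\to i_*(\sI_Z(2))$ then fails to be locally free along $Z$: the quotient has depth $1$, hence projective dimension $n\geq 2$ over the $(n+1)$-dimensional regular local rings at points of $Z$, so $\sV$ has projective dimension $n-1\geq 1$ there, and $\sV_0$ is not even a vector bundle. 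Your closing remark that the parity of $n$ "enters only in whether the complement splits off cleanly" is therefore wrong: parity decides whether your modification sheaf exists at all. (Your side claims are fine: $h^0(T_{\bP_n}(-2))=0$, and the $\mathcal{T}or$-computation for restricting an elementary modification to $X_0$ is correct when $\sQ$ is locally free.)

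This obstruction is precisely why the paper does something more elaborate. Instead of modifying $T_{\bP_n}\boxtimes\sO_\Delta$ along a quotient line bundle, it degenerates the Euler-sequence presentation itself: $\sV$ is defined as the cokernel of a family of bundle inclusions $\Omega^1_{\bP_n\times\Delta/\Delta}(2)\oplus\sO_{\bP_n\times\Delta}\hookrightarrow\sO_{\bP_n\times\Delta}(1)^{\oplus(n+1)}\oplus\Omega^1_{\bP_n\times\Delta/\Delta}(2)$, given by a matrix whose diagonal block $\gamma_t=t\cdot\id$ dies at $t=0$, while the off-diagonal entry $\delta_t$ is a general section of $\Omega^1_{\bP_n}(2)$ whose (unavoidable, for $n$ even) zeros are placed away from the locus where the complementary block $\alpha_t\oplus\beta_t$ fails to be surjective; this keeps the matrix pointwise injective for all $t$, so the cokernel is locally free. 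The identification $\sV_t\simeq T_{\bP_n}$ for $t\neq0$ and the value $h^0(\sV_0(-2))=1$ are then extracted from a cohomology computation on the resulting short exact sequences together with a Beilinson-spectral-sequence lemma characterizing $T_{\bP_n}$ by its cohomology table. To salvage your approach you would have to build in a comparable correction term; as written, it proves the proposition only for odd $n$.
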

\begin{proof}
We construct an inclusion of vector bundles
\[
A\colon \Omega^1_{\bP_n\times\Delta/\Delta}(2)\oplus \sO_{\bP_n\times\Delta}
\hookrightarrow \sO_{\bP_n\times\Delta}(1)^{\oplus(n+1)}\oplus \Omega^1_{\bP_n\times\Delta/\Delta}(2)
\]
via a family $A=(A_t)_{t\in\Delta}$ of matrices
\[
A_t = 
\begin{pmatrix}
\alpha_t & \beta_t \\
\gamma_t & \delta_t
\end{pmatrix}
\]
of sheaf homomorphisms
\begin{align*}
\alpha_t&\colon \Omega^1_{\bP_n}(2) \to \sO_{\bP_n}(1)^{\oplus(n+1)},
&\beta_t&\colon \sO_{\bP_n} \to \sO_{\bP_n}(1)^{\oplus(n+1)},\\
\gamma_t&\colon \Omega^1_{\bP_n}(2) \to \Omega^1_{\bP_n}(2),
&\delta_t&\colon \sO_{\bP_n} \to \Omega^1_{\bP_n}(2),
\end{align*}
which we define as follows: We take $\alpha_t$ and $\beta_t$ to be the natural inclusions coming from the Euler sequence and its dual, where we choose the coordinates on~$\bP_n$ such that
\[
\beta_t(\sO_{\bP_n}) \not\subset \alpha_t(\Omega^1_{\bP_n}(2)).
\]
This implies that the map
\[
\alpha_t\oplus \beta_t\colon \Omega^1_{\bP_n}(2) \oplus \sO_{\bP_n} \to \sO_{\bP_n}(1)^{\oplus(n+1)}
\]
is generically surjective. Since $\Omega^1_{\bP_n}(2)\simeq\Lambda^{n-1}(T_{\bP_n}(-1))$ is globally generated, a general section in $H^0(\Omega^1_{\bP_n}(2))$ has 
only finitely many zeroes. Since $\Omega^1_{\bP_n}(2)$ is homogeneous, we can thus choose the map $\delta_t$ in such a way that its zeroes are disjoint from the locus where $\alpha_t\oplus\beta_t$ 
is not surjective. Finally we let $\gamma_t=t\cdot\id$.

Now in order to show that $A$ is an inclusion of vector bundles, we need to show that for any point $(p,t)\in\bP_n\times\Delta$, the matrix
\[
A_t(p) = 
\begin{pmatrix}
\alpha_t(p) & \beta_t(p) \\
\gamma_t(p) & \delta_t(p)
\end{pmatrix}
\in \bC^{(2n+1)\times(n+1)}
\]
has rank~$n+1$. For semicontinuity reasons, shrinking $\Delta$ if necessary, we can assume $t=0$, then the rank condition follows easily from the choice of $\alpha_0$, $\beta_0$, $\gamma_0$, $\delta_0$.

We now let
\[
\sV := \coker A.
\]
It remains to investigate the properties of the bundles $\sV_t:=\sV|\bP_n\times\{t\}$. For each $t\in\Delta$, we have an exact sequence of vector bundles
\begin{equation}\label{monad}
0\longrightarrow \Omega^1_{\bP_n}(2)\oplus \sO_{\bP_n}
\longrightarrow \sO_{\bP_n}(1)^{\oplus(n+1)}\oplus \Omega^1_{\bP_n}(2)\longrightarrow\sV_t\longrightarrow 0.
\end{equation}
We want to calculate $H^q(\sV_t(-1-k))$ for $k=0$, $\dots$, $n$. From the Bott formula we obtain for $(k,q)\in\{0,\dots,n\}^2$:
\[
h^q(\Omega^1_{\bP_n}(1-k))=\begin{cases}
1, & \text{for $(k,q)=(1,1)$},\\
0, &\text{otherwise}.
\end{cases}
\]
Now if we tensorize \eqref{monad} with~$\sO_{\bP_n}(-1-k)$, take the long exact cohomology sequence and observe that $H^q(\delta_0)=0$ for every~$q$, we get for $(k,q)\in\{0,\dots,n\}^2$:
\[
h^q(\sV_0(-1-k)) = \begin{cases}
n+1, &\text{for $(k,q)=(0,0)$},\\
1, &\text{for $(k,q)\in\{(1,0),(1,1),(n,n-1)\}$},\\
0, &\text{otherwise}.
\end{cases}
\]
Similarly, if we observe that $H^q(\delta_t)=\id$ for $t\ne0$, we obtain for $t\ne0$, $(k,q)\in\{0,\dots,n\}^2$:
\[
h^q(\sV_0(-1-k)) = \begin{cases}
n+1, &\text{for $(k,q)=(0,0)$},\\
1, &\text{for $(k,q)\in\{(n,n-1)\}$},\\
0, &\text{otherwise}.
\end{cases}
\]
The proposition now follows from Lemma~\ref{beillem}.
\end{proof}
\begin{lemma}\label{beillem}
Let $V$ be a vector bundle on~$\bP_n$ such that for any $(k,q)\in\{0,\dots,n\}^2$, we have
\[
h^q(V(-1-k)) = \begin{cases}
n+1, &\text{for $(k,q)=(0,0)$},\\
1, &\text{for $(k,q)=(n,n-1)$},\\
0, &\text{otherwise}.
\end{cases}
\]
Then $V\simeq T_{\bP_n}$.
\end{lemma}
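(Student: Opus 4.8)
The proof will be an application of the Beilinson spectral sequence, which is presumably why the lemma earns its name. The plan is to feed the prescribed cohomology into Beilinson's resolution of the diagonal and to read off an Euler sequence for $V$.

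First I would apply the Beilinson spectral sequence (cf.\ \cite{OSS80}, II.3) to the twisted bundle $V(-1)$. In its standard form it reads
\[
E_1^{p,q} = H^q(\bP_n, V(-1)(p)) \otimes \Omega^{-p}_{\bP_n}(-p) \Longrightarrow \begin{cases} V(-1), & p+q=0, \\ 0, & \text{otherwise,} \end{cases}
\]
with $-n \le p \le 0$. Writing $p = -k$ for $k \in \{0,\dots,n\}$, the $E_1$-term becomes $H^q(V(-1-k)) \otimes \Omega^k_{\bP_n}(k)$, so the hypothesis on $h^q(V(-1-k))$ controls every entry of the $E_1$-page. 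By assumption only two entries survive: at $(k,q)=(0,0)$ we get $E_1^{0,0} = H^0(V(-1)) \otimes \sO_{\bP_n} = \sO_{\bP_n}^{\oplus(n+1)}$, and at $(k,q)=(n,n-1)$ we get $E_1^{-n,n-1} = H^{n-1}(V(-1-n)) \otimes \Omega^n_{\bP_n}(n)$. Since $\Omega^n_{\bP_n} = \sO_{\bP_n}(-n-1)$, the latter equals $\sO_{\bP_n}(-1)$.

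Next I would analyse the differentials. The two surviving terms sit at total degrees $0$ and $-1$ and in the columns $p=0$ and $p=-n$; the only differential that can connect them is $d_n \colon E_n^{-n,n-1} \to E_n^{0,0}$, and all earlier differentials vanish for degree reasons, so $E_1 = E_n$. Since the abutment is concentrated in total degree $0$, convergence forces $E_\infty^{-n,n-1}=0$ and $E_\infty^{0,0}=V(-1)$; as there are no other incoming or outgoing differentials at these spots, this means exactly that $d_n$ is injective with cokernel $V(-1)$, giving
\[
0 \to \sO_{\bP_n}(-1) \xrightarrow{\ d_n\ } \sO_{\bP_n}^{\oplus(n+1)} \to V(-1) \to 0.
\]

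Finally I would identify this with the (twisted) Euler sequence. Because $V(-1)$ is locally free, the inclusion $d_n$ is a subbundle inclusion, i.e.\ a nowhere-vanishing section of $\Hom(\sO_{\bP_n}(-1),\sO_{\bP_n}^{\oplus(n+1)}) = \sO_{\bP_n}(1)^{\oplus(n+1)}$; equivalently it is an $(n+1)$-tuple of linear forms with no common zero, hence a basis of $H^0(\bP_n,\sO_{\bP_n}(1))$. After acting by a suitable element of $\mathrm{GL}_{n+1}(\bC)$ on the target, $d_n$ becomes the standard Euler inclusion $1 \mapsto (x_0,\dots,x_n)$, whose cokernel is $T_{\bP_n}(-1)$. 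Therefore $V(-1) \simeq T_{\bP_n}(-1)$ and $V \simeq T_{\bP_n}$. I expect the only delicate points to be the bookkeeping of the Beilinson indexing (in particular the identification $\Omega^n_{\bP_n}(n) \simeq \sO_{\bP_n}(-1)$) and the convergence argument pinning $d_n$ down as an isomorphism onto a subbundle; the concluding identification with the Euler sequence is then routine.
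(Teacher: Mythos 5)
Your proof is correct and follows essentially the same route as the paper: both apply the Beilinson spectral sequence to $V(-1)$, observe that the hypotheses kill all $E_1$-entries except $E_1^{0,0}=\sO_{\bP_n}^{\oplus(n+1)}$ and $E_1^{-n,n-1}=\sO_{\bP_n}(-1)$, extract from convergence the short exact sequence $0\to\sO_{\bP_n}(-1)\to\sO_{\bP_n}^{\oplus(n+1)}\to V(-1)\to 0$, and use local freeness of $V$ to conclude the inclusion has no zeroes and is therefore an Euler sequence. Your final paragraph merely spells out in more detail the identification with the standard Euler sequence that the paper leaves implicit.
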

\begin{proof}
We consider the Beilinson spectral sequence for the bundle $V(-1)$, which has $E_1$-term
\[
E_1^{pq} = H^q(V(-1+p))\otimes\Omega^{-p}_{\bP_n}(-p)
\]
(cf.~\cite{OSS80}, (II.3.1.3)).

By assumption, $E_1^{pq}=0$ for $(p,q)\not\in\{(0,0),(-n,n-1)\}$ and
\[
E_1^{0,0} = \sO_{\bP_n}^{\oplus(n+1)}, \quad E_1^{-n,n-1} = \sO_{\bP_n}(-1).
\]
In particular, the only nonzero differential occurs at the $E_n$-term, namely
\[d_n^{-n,n-1}\colon E_n^{-n,n-1}\to E_n^{0,0}.\]
Since $E_\infty^{pq}=0$ for $p+q\ne0$ and $E_\infty^{-p,p}$ are the quotients of a filtration of~$V(-1)$, the differential $d_n^{-n,n-1}$ induces a short exact sequence
\begin{equation} \label{beilinsoneuler}
0 \longrightarrow \sO_{\bP_n}(-1) \stackrel{d_n^{-n,n-1}}{\longrightarrow} \sO_{\bP_n}^{\oplus(n+1)} \longrightarrow V(-1) \longrightarrow 0.
\end{equation}
Now since $V$ is locally free, the map $d_n^{-n,n-1}$ cannot have zeroes, so \eqref{beilinsoneuler} must be an Euler sequence, whence $V(-1)\simeq T_{\bP_n}(-1)$.
\end{proof}

\section{Deformations II: positive irregularity}
\label{sec:irr}

A homogeneous compact contact K\"ahler manifold $X$ of dimension $2n+1$ with $b_2(X) \geq 2$ is either $\bP(T_{\bP_{n+1}})$ or 
a product $A \times \bP_n$ with a torus $A$ of dimension $n+1$. Here we study in general the K\"ahler deformations of $A \times \bP_n,$ where $A$ is
an $m$-dimensional torus. 

\begin{theorem}  \label{irreg} Let $\pi\colon \mathcal X \to \Delta $ be a family of compact manifolds over the unit disc
$\Delta \subset \mathbb C$. Assume $X_t \simeq A_t \times
\bP_n$ for $t \ne 0$, where $A_t$ is a torus of dimension $m.$ 
If $X_0$ is in class $\sC$, then the relative Albanese morphism realises $\sX$ as a submersion $\alpha\colon \sX \to \sA$, where 
$\pi\colon \sA \to \Delta$ is torus bundle such that $\pi^{-1}(t) \simeq A_t$ for $t \ne 0.$ Moreover there is a locally free sheaf
$\sE$ over $\sA$ such that $\sX = \bP(\sE)$, $\sX_t \simeq \bP(\sE_t)$ for all $t$ and $\sE \vert A_t \simeq \sO_{A_t}^{n+1}$ for $t \ne 0.$
\end{theorem}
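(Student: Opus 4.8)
The plan is to construct the relative Albanese morphism and then reduce the essential point -- equidimensionality of the central Albanese map -- to a cohomological computation on $X_0$, exactly as in Sublemma~\ref{sub1}. First I would record that $q(X_0)=m$. By Ehresmann's theorem $\pi\colon\sX\to\Delta$ is a $C^\infty$ fibre bundle, so $X_0$ is diffeomorphic to $A_t\times\bP_n$ and $b_1(X_0)=b_1(A_t)=2m$. Since $X_0$ lies in class $\sC$, the $\partial\bar\partial$-lemma holds on $X_0$, whence $H^1(X_0,\bC)=H^{1,0}\oplus H^{0,1}$ with $h^{1,0}=h^{0,1}=q(X_0)$ and every holomorphic $1$-form is closed; thus $q(X_0)=\tfrac12 b_1(X_0)=m$. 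In particular $h^{0,1}(X_t)=m$ is constant, so $R^1\pi_*\sO_{\sX}$ is locally free of rank $m$ while $R^1\pi_*\bZ$ is a local system, trivial since $\Delta$ is contractible. This lets me form the relative Albanese torus bundle $\tau\colon\sA\to\Delta$ together with a morphism $\alpha\colon\sX\to\sA$ over $\Delta$ whose restriction to $X_t$ is the Albanese map $X_t\to\mathrm{Alb}(X_t)$; for $t\ne0$ this is the projection $A_t\times\bP_n\to A_t$ and $\tau^{-1}(t)\simeq A_t$. As $\alpha$ is proper and dominant it is surjective, hence so is $\alpha_0:=\alpha\vert X_0\colon X_0\to A_0$, and every fibre of $\alpha_0$ has dimension $\ge\dim X_0-\dim A_0=n$.

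The heart of the argument, and the step I expect to be the main obstacle, is to show that $\alpha_0$ is equidimensional, i.e.\ has no fibre component of dimension $>n$. Suppose $F_0$ is an irreducible component of a fibre of $\alpha_0$ with $\dim F_0=n+j$, $j\ge1$, so $[F_0]\in H^{2(m-j)}(X_0,\bQ)$. Under the diffeomorphism $X_0\cong A_t\times\bP_n$ we have $H^*(X_0,\bQ)=H^*(A)\otimes\bQ[\eta]/(\eta^{n+1})$, where $\eta$ is the hyperplane class of the $\bP_n$-factor; moreover, since $\alpha_0$ induces an isomorphism on $H^1$ and $H^*(A_0)$ is generated in degree one, one has $\alpha_0^*H^*(A_0,\bQ)=H^*(A)\subset H^*(X_0,\bQ)$. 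Now every basis element of the Poincar\'e-dual space $H^{2(n+j)}(X_0,\bQ)$ has the form $\alpha_0^*\omega\cdot\eta^p$ with $0\le p\le n$ and $\omega\in H^{2i}(A_0)$, $i=n+j-p$; since $j\ge1$ and $p\le n$ we have $i\ge j\ge1$, so $\omega$ has positive degree and $\alpha_0^*\omega\vert F_0=0$ because $F_0$ is contracted by $\alpha_0$. Hence $[F_0]$ pairs to zero against a basis of $H^{2(n+j)}(X_0,\bQ)$, so $[F_0]=0$ by Poincar\'e duality. This is impossible: $X_0$ being in class $\sC$ carries a K\"ahler current, so the class of the positive-dimensional subvariety $F_0$ is nonzero. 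Therefore no such $F_0$ exists and, with the lower bound above, all fibres of $\alpha_0$ have dimension exactly $n$.

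It remains to globalise. By the theorem of Hirzebruch--Kodaira the general fibre of $\alpha_0$, being a smooth limit of the fibres $\bP_n$ of $\alpha_t$, is again $\bP_n$. The relative hyperplane bundle of the $\bP_n$-bundle $\sX\setminus X_0\to\sA\setminus A_0$ extends uniquely to a line bundle $\sL$ on $\sX$, the extension being unique because $\sO_{\sX}(X_0)=\pi^*\sO_\Delta(0)$ is trivial; twisting $\sL$ by the pullback of a suitable line bundle on $\sA$ we may normalise so that $\alpha_{t*}(\sL\vert X_t)\simeq\sO_{A_t}^{n+1}$ for $t\ne0$, which is possible since each $\sE\vert A_t$ is a priori only a twist of $\sO_{A_t}^{n+1}$. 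Then $\sL$ is $\alpha$-ample and restricts to $\sO(1)$ on the general fibre of $\alpha_0$, so by the equidimensionality just established and Lemma~\ref{Kaehler} the morphism $\alpha\colon\sX\to\sA$ is a $\bP_n$-bundle; in particular it is a submersion and $\sX\simeq\bP(\sE)$ with $\sE=\alpha_*\sL$ locally free of rank $n+1$ on $\sA$. Finally, for $t\ne0$ the presentation $X_t\simeq A_t\times\bP_n=\bP(\sO_{A_t}^{n+1})$ together with our normalisation of $\sL$ gives $\sE\vert A_t=\alpha_{t*}(\sL\vert X_t)\simeq\sO_{A_t}^{n+1}$, which completes the proof.
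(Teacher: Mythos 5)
Your construction of the relative Albanese map and your cohomological proof that $\alpha_0$ is equidimensional are a genuinely different route from the paper's: the paper never argues via equidimensionality, but proves the stronger fact that $\alpha_0$ is a \emph{submersion}, by pushing forward relative vector fields from the tori $A_t$ and observing that the resulting automorphisms of $X_0$ cover all translations of $A_0$, so that the set of critical values of $\alpha_0$, being translation-invariant and proper, is empty; then Siu's rigidity \cite{Si89}, applied along local sections of $\sA\to\Delta$, shows every fiber of $\alpha$ is $\bP_n$. Your adaptation of Sublemma~\ref{sub1} (with projectivity replaced by the nonvanishing of classes of subvarieties on class-$\sC$ manifolds) is a valid alternative for that particular step, though note that this nonvanishing is itself a nontrivial theorem (via K\"ahler currents and regularization), and that your appeal to Hirzebruch--Kodaira should be replaced by \cite{Si89}: you do not know that the general fiber of $\alpha_0$ is K\"ahler, only that it lies in class $\sC$, and class-$\sC$ (or Moishezon) manifolds homeomorphic to $\bP_n$ are not covered by Hirzebruch--Kodaira.

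The genuine gap is in your final paragraph. You assert that the ``relative hyperplane bundle'' of the $\bP_n$-bundle $\sX\setminus X_0\to\sA\setminus A_0$ exists and extends across $X_0$. Neither claim is justified, and the first is precisely the hard point of the theorem: a holomorphically locally trivial $\bP_n$-bundle admits a line bundle restricting to $\sO(1)$ on every fiber if and only if it is a projectivized vector bundle, and the obstruction is a Brauer class in $H^2(\sA\setminus A_0,\sO^*)$ which does not vanish for formal reasons --- knowing $X_t\simeq\bP(\sO_{A_t}^{n+1})$ fiber by fiber is not enough. This is exactly what the paper's citation of \cite{El82}, (4.3), supplies; but the paper can invoke it because at that stage $\alpha\colon\sX\to\sA$ is already known to be a $\bP_n$-bundle over \emph{all} of $\sA$, whereas you need $\sL$ as an \emph{input} to Lemma~\ref{Kaehler} in order to establish the bundle structure in the first place, so your argument is circular at the decisive moment. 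Equidimensionality alone does not suffice: Lemma~\ref{Kaehler} requires a relatively ample line bundle of fiber degree one, and even if one produces a line bundle on $X_0$ with the topological class $\eta$ (possible, using constancy of the Hodge numbers and the exponential sequence), its relative ampleness on the special fibers of $\alpha_0$ is unknown. Likewise, your normalisation achieving $\sE\vert A_t\simeq\sO_{A_t}^{n+1}$ amounts to extracting an $(n+1)$-st root of a line bundle on $\sA$, which is not justified. The missing idea is the paper's use of the symmetry: the torus action forces $\alpha_0$ to be a submersion, after which all fibers are $\bP_n$, the bundle is locally trivial by Fischer--Grauert, and the Brauer-type existence of $\sE$ is delegated to \cite{El82}.
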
 

\begin{proof} Let $m = \dim A_t = q(X_t)$ for $t \ne 0.$ Hodge decomposition on $X_0$ gives $q(X_0) = m.$ 
Let 
\[
\alpha\colon \sX \to \sA
\]
be the relative Albanese map. 
Then $\sA \to \Delta $ is a torus bundle and 
\[
\alpha_t = \alpha \vert X_t\colon  X_t \to A_t
\]
is the Albanese map for all $t.$
Since $\alpha_t$ is surjective for all $t \ne 0,$ the map $\alpha$ is surjective, too, and so is $\alpha_0$.  
We may choose relative vector fields
\[
v_1, \ldots, v_m \in H^0(\sX,T_{\sX/\Delta}),
\]
such that for all $t,$ the push-forwards $(\alpha_t)_*(v_i \vert X_t)$ form a basis of $H^0(A_t,T_{A_t}).$ 
Since singular fibers are mapped to singular fibers by automorphisms of $X_0,$ it follows that the singular locus $S$
of $\alpha_0$, i.e., the set of points $a \in A_0$ such that the fiber over $a$ is singular, is invariant in $A_0$ under
the automorphism group. Hence $S = \emptyset$, so that $\alpha_0$ is a submersion like all the other maps $\alpha_t.$ 
Therefore $\alpha$ is a bundle, with fibers $ \bP_n$ over $\Delta^*.$ The global rigidity of the projective space 
\cite{Si89}
applied to local sections of
$\sA$ over $\Delta,$ passing through $A_0$, implies that all fibers of $\alpha$ are $ \bP_n$. The existence of $\mathcal \sE$ follows from \cite{El82}, (4.3). 
\end{proof} 

\begin{remark} {\rm  Popovici \cite{Po09} has shown that any global deformation of projective manifolds is in class $\sC$, so that the  
assumption in Theorem 6.1 that $X_0$ is in class $\sC$ can be removed in case $X_t$ is projective. The K\"ahler version of Popovici's theorem is still open.} 
\end{remark}  

\begin{example} {\rm  We cannot conclude in Theorem 6.1 that $X_0 \simeq A_0 \times \bP_n, $ even if $m = n = 1$. Take e.g.~a rank-2 vector bundle
$\sF$ over $\bP_1  \times \Delta$ such that $\sF \vert \bP_1 \times \{t\} = \sO^2 $ for $t \ne 0$ and $\sF \vert \bP_1  \times \{0\} = \sO(1) \oplus \sO(-1)$. 
Let $\eta\colon A \to \bP_1$ be a two-sheeted covering from an elliptic curve $A$ and set $\sE = (\eta \times {\rm id})^*(\sF).$ Then $\sX = \bP(\sE)$ is a family
of compact surfaces $X_t$ such that $X_t = A \times \bP_1$ for $t \ne 0$ but $X_0$ is not a product. Notice also that $X_0$ is not almost homogeneous.\\
It is a trivial matter to modify this example to obtain a map to a $2$-dimensional torus which is a product of elliptic curves. Therefore the limit of a K\"ahler contact manifold
with positive irregularity might not be a contact manifold again. }
\end{example} 

\begin{corollary} Assume the situation of Theorem 6.1. Then the following assertions are equivalent.
\begin{enumerate} 
\item $X_0 \simeq A_0 \times \bP_n$.
\item $\sE_0$ is semi-stable for some K\"ahler class $\omega$.
\item $X_0$ is homogeneous.
\item $X_0$ is almost homogeneous. 
\end{enumerate}
\end{corollary}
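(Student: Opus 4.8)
The plan is to establish the chain of implications $(1)\Rightarrow(3)\Rightarrow(4)$ and $(1)\Rightarrow(2)$ directly, and then close the circle by proving the two genuinely informative implications $(2)\Rightarrow(1)$ and $(4)\Rightarrow(1)$. The easy implications require almost no work: if $X_0\simeq A_0\times\bP_n$, then $X_0$ is homogeneous (a product of a torus and a projective space, the latter being rational homogeneous), and homogeneous trivially implies almost homogeneous. For $(1)\Rightarrow(2)$, note that $X_0\simeq A_0\times\bP_n=\bP(\sO_{A_0}^{n+1})$ forces $\sE_0\simeq\sO_{A_0}^{n+1}\otimes M$ for some line bundle $M$ (projective bundles determine the bundle up to twist), and a twist of a trivial bundle is semistable for every polarization.

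The first substantial implication is $(4)\Rightarrow(1)$. Here I would invoke the structural results of Section 3: since $X_0$ is a compact almost homogeneous K\"ahler contact manifold — being a deformation of the contact manifolds $A_t\times\bP_n=\bP(T_{A_t})$, the contact structure should persist, or more safely, $X_0=\bP(\sE_0)$ already has $b_2\ge2$ — one applies the Albanese analysis. Concretely, the relative Albanese construction already gives $\alpha_0\colon X_0\to A_0$ as a $\bP_n$-bundle $\bP(\sE_0)$, so the content is to show almost homogeneity forces $\sE_0$ to be a twist of the trivial bundle. By Lemma 3.1 (or rather its analogue), almost homogeneity of $X_0=\bP(\sE_0)$ descends to almost homogeneity of the base $A_0$, which is automatic for a torus; the real leverage is that the vector fields on $X_0$ must be abundant enough to span $T_{X_0}$ generically, and since all vertical automorphisms of $\bP(\sE_0)$ come from $\mathrm{PGL}$ acting fiberwise, one deduces $\sE_0$ has a large automorphism group, forcing it to split as a twist of $\sO_{A_0}^{n+1}$.

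The main obstacle, and the heart of the corollary, is $(2)\Rightarrow(1)$. The strategy is to show that a semistable $\sE_0$ on the torus $A_0$ must in fact be the twist of a trivial bundle. Here I would use that $\sE_t\simeq\sO_{A_t}^{n+1}$ (up to the relative twist recorded in the theorem) for $t\ne0$, so the Chern classes of $\sE_0$ are those of a trivial bundle by topological invariance in the family: $c_1(\sE_0)$ and $c_2(\sE_0)$ vanish (modulo the universal twist). A semistable bundle on a torus with vanishing Chern classes is, by the theory of semistable bundles with trivial discriminant on abelian varieties (flat/numerically flat bundles, via the work of Simpson and Bando–Siu, or the Mehta–Ramanathan / UCY correspondence), projectively flat and hence unitary flat; on a torus this means it is a direct sum of line bundles in $\mathrm{Pic}^0$ after the twist. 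The remaining point is to rule out a nontrivial such flat bundle: a nontrivial extension or a sum of distinct degree-zero line bundles would break the product structure, but semistability together with the deformation invariance of the numerical class pins $\sE_0$ down to $\sO_{A_0}^{n+1}\otimes M$, giving $X_0\simeq\bP(\sO^{n+1})\simeq A_0\times\bP_n$.

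I expect the hardest technical step to be the precise bookkeeping of the line-bundle twist $M$ throughout the family and the invocation of the flat-bundle classification on $A_0$ in the K\"ahler (non-necessarily-projective) setting; the appeal to \cite{Si89} and \cite{El82} already used in Theorem~\ref{irreg} provides the rigidity and descent tools, but one must verify that semistability with respect to the K\"ahler class $\omega$ is exactly what upgrades "$\sE_0$ has trivial Chern classes" to "$\sE_0$ is a twist of $\sO_{A_0}^{n+1}$." Once that is in hand, the four conditions close up and the corollary follows.
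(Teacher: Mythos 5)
Your easy implications are fine, but both of the substantial ones have a genuine gap, and it is the same gap twice: you try to pin down $\sE_0$ by intrinsic properties of a bundle on a torus (semistability plus numerical triviality, respectively ``a large automorphism group''), and both of the statements you would need are simply \emph{false} without using the family. For $(2)\Rightarrow(1)$: a semistable bundle with vanishing Chern classes on a torus need not be polystable, so the Simpson/Bando--Siu theory you invoke yields only a filtration by flat bundles, not a direct sum of line bundles in $\Pic^0$. Concretely, the non-split extension $0\to\sO\to F\to\sO\to 0$ on an elliptic curve is semistable with trivial Chern classes, yet $\bP(F)$ is not a product; likewise $L_0\oplus\dots\oplus L_n$ with distinct $L_i\in\Pic^0(A_0)$ is even unitary flat, semistable, numerically trivial, and still $\bP(L_0\oplus\dots\oplus L_n)\not\simeq A_0\times\bP_n$. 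Neither example is excluded by ``deformation invariance of the numerical class'', since both have exactly the numerical invariants of $\sO^{n+1}$; you acknowledge this loose end but then assert it away. The missing input is semicontinuity in the family: $h^0(\sE_0)\ge h^0(\sE_t)=n+1$. That is precisely what the paper uses, and in fact it proves $(2)\Rightarrow(3)$ directly with no flat-bundle machinery at all: $n+1$ independent sections give $\mu\colon\sO_{A_0}^{n+1}\to\sE_0$, semistability forces $\mu$ to be generically surjective, hence $\det\mu$ is a nonzero section of the numerically trivial line bundle $\det\sE_0$, hence nowhere zero, hence $\mu$ is an isomorphism and $\sE_0$ is trivial.

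For $(4)\Rightarrow(1)$, the sentence ``one deduces $\sE_0$ has a large automorphism group, forcing it to split as a twist of $\sO_{A_0}^{n+1}$'' is not an argument but a restatement of the claim, and as an intrinsic claim it fails: the same two bundles above give almost homogeneous $\bP(\sE_0)$ which are not products. Indeed, translations of $A_0$ lift (both bundles are translation-invariant), and the vertical fields --- the additive group $\mathbb{G}_a\subset\mathrm{PGL}_2$ fixing a point of $\bP_1$, resp.\ the diagonal torus $(\bC^*)^n\subset\mathrm{PGL}_{n+1}$ --- have an open orbit in each fiber; $\bP(F)$ even appears in Potters' list \cite{Po68} quoted in the proof of Theorem~\ref{list}. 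Your fallback appeal to Section 3 is also inapplicable: those structure results require $X_0$ to be a \emph{contact} manifold, and the paper's Example 6.3 shows exactly that the contact structure need not survive in the limit, so assuming it would be circular. The paper's actual proof of $(4)\Rightarrow(1)$ uses the family in an essential way: almost homogeneity forces the connecting map $H^0(X_0,\alpha_0^*T_{A_0})\to H^1(X_0,T_{X_0/A_0})$ to vanish, so the relative tangent sequence splits and $\sF=\alpha_0^*(T_{A_0})\subset T_{X_0}$ is a foliation whose leaves are \emph{compact because they are limits of the tori} $A_t\times\{z\}$; then H\"oring's theorem (\cite{Hoe07}, 2.4.3) gives an equidimensional quotient $f\colon X_0\to Z_0$ whose fibers are tori, and $Z_0=\bP_n$, whence $X_0\simeq A_0\times\bP_n$. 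Without some such use of the family --- compact leaves, or $h^0(\sE_0)\ge n+1$ --- the two implications you single out as the heart of the corollary cannot be proved, because they are not true.
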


\begin{proof} (1) implies (2). Under the assumption of (1), there is a line bundle $L$ on $A_0$ such that $\sE_0 \simeq L^{\oplus n+1}.$ Hence $\sE$ is semi-stable 
for actually any choice of $\omega.$  \\
(2) implies (3). From the semi-stability of $\sE_0$ and $h^0(\sE_0) \geq n+1,$ it follows easily that $\sE_0$ is trivial and that $X_0 $ is homogeneous as product
$A_0 \times \bP_n$. In fact, choose $n+1$ sections of $\sE_0$ and consider the induced map $\mu\colon \sO_{A_0}^{n+1} \to \sE_0$. 
By the stability of $\sE_0$, the map $\mu$ is generically surjective. Hence $\det \mu \ne 0$, hence an isomorphism, so that
$\mu$ itself is an isomorphism. \\ 
The implication ``(3) implies (4)'' is obvious. \\
(4) implies (1). Consider the tangent bundle sequence
\[
0 \to T_{X_0/A_0} \to T_{X_0} \to \alpha_0^*(T_{A_0}) \to 0.
\]
Since $X_0$ is almost homogeneous, all vector fields on $A_0$ must lift to $X_0$.
Consequently the connecting map
\[
H^0(X_0,\pi^*(T_{A_0})) \to H^1(X_0,T_{X_0/A_0})
\]
vanishes, and therefore the tangent bundle sequence splits.
Let $\sF = \alpha_0^*(T_{A_0}). $ Then $\sF \subset T_{X_0}$ is clearly a foliation and it has compact
leaves (the limits of tori in $A_t  \times \bP_n$). By \cite{Hoe07}, 2.4.3, there exists an equi-dimensional holomorphic map $f\colon X_0 \to Z_0 $
to a compact variety $Z_0$  such that the set-theoretical fibers $F$ of $f$ are leaves of $\sF$. Since the fibers $F$ have an \'etale map to $A_0$, they must be  tori again. 
It is now immediate that $Z_0 = \bP_n$ and that $X_0 = A_0 \times \bP_n$. 
\end{proof}

\begin{corollary} \label{corsplit} Assume in Theorem 6.1 that $m = 2$ and $n = 1$. Then either $X_0 \simeq A_0 \times \bP_1$, or $X_0 = \bP(\sE_0)$ and one of the
following holds: 
\begin{enumerate} 
\item There is a torus bundle $p\colon A_0 \to B_0$ to an elliptic curve $B_0$ and the rank-2 bundle $\sE_0$ on
$A_0$ sits in an extension
\[
0 \to p^*(\sL_0) \to \sE_0 \to p^*(\sL_0^*) \to 0
\]
with $\deg \sL_0 > 0$.
\item The rank 2-bundle $\sE_0$ sits in an extension 
$$ 0 \to \sS \to \sE_0 \to \sI_Z \otimes \sS^* \to 0 $$
with an ample line bundle $\sS$ and a finite non-empty set $Z$ of degree $\deg Z = c_1(\sS)^2.$ 
\end{enumerate} 
\end{corollary}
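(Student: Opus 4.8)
The plan is to combine the preceding corollary with the classification of line bundles on a two-dimensional torus. By that corollary (the case $n=1$ of the equivalence (1)$\Leftrightarrow$(2)), the alternative $X_0\not\simeq A_0\times\bP_1$ is exactly the statement that $\sE_0$ is not semi-stable for \emph{any} K\"ahler class; so assume $X_0=\bP(\sE_0)$ with $\sE_0$ unstable and fix a K\"ahler class $\omega$ on $A_0$. First I would record the numerical input coming from Theorem~6.1, where $\sE\vert A_t\simeq\sO_{A_t}^2$ for $t\ne0$. Since the family $\sA\to\Delta$ is a smooth fibre bundle of tori, the Chern classes $c_1,c_2$ are flat, so $c_1(\sE_0)=0$ and $c_2(\sE_0)=0$. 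The class $[\det\sE\vert A_t]$ is a holomorphic section of the relative Picard torus bundle over $\Delta$ which vanishes on $\Delta^*$, hence also at $0$, so that $\det\sE_0\simeq\sO_{A_0}$. Finally, by upper semicontinuity $h^0(\sE_0)\ge h^0(\sE_t)=2$.

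Next I would take the maximal destabilizing subsheaf $\sS\subset\sE_0$ with respect to $\omega$. Being saturated of rank one on a smooth surface it is a line bundle, and instability together with $c_1(\sE_0)\cdot\omega=0$ forces $c_1(\sS)\cdot\omega>0$. Because $\det\sE_0\simeq\sO_{A_0}$, the quotient is $\sE_0/\sS\simeq\sI_Z\otimes\sS^*$ for a zero-dimensional subscheme $Z$, giving
\[
0\to\sS\to\sE_0\to\sI_Z\otimes\sS^*\to0 .
\]
A Chern class computation in this sequence yields $\deg Z=c_1(\sS)^2$; in particular $c_1(\sS)^2=\deg Z\ge0$. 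Moreover $\sS^*$ has negative $\omega$-degree, so $h^0(\sI_Z\otimes\sS^*)=0$, whence $H^0(\sE_0)=H^0(\sS)$ and $h^0(\sS)\ge2$. Thus $\sS$ is an effective line bundle of positive $\omega$-degree with $c_1(\sS)^2\ge0$.

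The argument then splits according to the value of $c_1(\sS)^2$. If $c_1(\sS)^2>0$, then $c_1(\sS)$ has positive self-intersection and positive degree against $\omega$; describing $\Pic(A_0)$ by Hermitian forms (equivalently, via the Hodge index theorem), its form must be positive definite, so $\sS$ is ample, and $\deg Z=c_1(\sS)^2>0$ shows that $Z$ is finite and non-empty. This is case~(2). If instead $c_1(\sS)^2=0$, then $Z=\emptyset$ and the Hermitian form of $\sS$ is degenerate and positive semi-definite (a negative eigenvalue is excluded by $h^0(\sS)>0$); its kernel is a one-dimensional subtorus $K\subset A_0$, and I would use the structure theory of line bundles on tori to conclude that $\sS$ descends along $p\colon A_0\to B_0:=A_0/K$ to an ample line bundle $\sL_0$ on the elliptic curve $B_0$, i.e.\ $\sS\simeq p^*\sL_0$ with $\deg\sL_0>0$ and $\sS^*\simeq p^*\sL_0^*$. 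The displayed sequence then becomes the extension of case~(1).

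I expect the main obstacle to be the degenerate case $c_1(\sS)^2=0$: one must show not merely that the form of $\sS$ is semi-positive, but that effectivity ($h^0(\sS)>0$) forces $\sS$ itself---and not just a $\Pic^0$-twist of it---to be a genuine pullback from $B_0$, which is precisely where the Appell--Humbert data and the triviality of $\sS$ along the fibres of $p$ enter. A secondary technical point is justifying the normalizations of the first paragraph, in particular $\det\sE_0\simeq\sO_{A_0}$, since $\bP(\sE_0)$ determines $\sE_0$ only up to twist.
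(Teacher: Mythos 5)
Your proof is correct and follows essentially the same route as the paper: reduce to the unstable case via the preceding corollary, take the maximal destabilizing line bundle $\sS$, get $\deg Z = c_1(\sS)^2$ from $c_2(\sE_0)=0$, deduce $h^0(\sS)\ge 2$ from $h^0(\sI_Z\otimes\sS^*)=0$, and then split into the case where $\sS$ is ample (case (2)) and the degenerate case where $\sS$ is pulled back from an elliptic-curve quotient (case (1)). The only differences are cosmetic: the paper phrases the dichotomy via nefness of effective divisors on a torus (ample versus nef with $c_1(\sS)^2=0$) rather than via positive semi-definite Hermitian forms, and it leaves implicit the normalizations $\det\sE_0\simeq\sO_{A_0}$, $c_2(\sE_0)=0$, $h^0(\sE_0)\ge 2$ that you justify explicitly.
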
 

\begin{proof} By Corollary 6.4 we may assume that $\sE_0$ is not semi-stable for some (or any) K\"ahler class $\omega.$ 
Let $\sS$ be a maximal destabilising subsheaf, which is actually a line bundle, leading to an exact sequence 
\[
0 \to \sS \to \sE_0 \to \sI_Z \otimes \sS^* \to 0.
\]
Here $Z$ is a finite set or empty. Taking $c_2$ and observing that $c_2(\sE_0) = 0$ gives
\[
c_1(\sS)^2 = \deg Z.
\]
The destabilisation property reads
\[
c_1(\sS) \cdot \omega > 0.
\]
Since $h^0(\sE_0) \geq 2$, we deduce that $h^0(\sS) \geq 2, $ in particular, $\sS$ is nef, $\sS$ being maximal destabilizing. \\
If $\sS$ is ample, there is nothing more to prove, hence we may assume that $\sS$ is not ample. $\sS$ being nef, $c_1(\sS)^2 = 0$
and $\sS$ defines a submersion
$p\colon A_0 \to B_0$ to an elliptic curve $B_0$ such that there exists an ample line bundle $\sL_0$ with $\sS = p^*(\sL_0)$. 
Therefore we obtain an extension
\[
0 \to p^*(\sL_0) \to \sE_0 \to p^*(\sL_0^*) \to 0,
\]
as required.
\end{proof} 

\begin{remark} {\rm The second case in Corollary \ref{corsplit} really occurs. Take a finite map $f\colon \sA \to \bP_2 \times \Delta$ 
over $\Delta$ and a rank-2 bundle $\sF $ on $\bP_2 \times \Delta$ such that $\sF \vert \bP_2 \times \{t\} \simeq \sO^2$ 
for $t \ne 0$ and such that $\sF_0$ is not trivial. For examples see e.g.~\cite{Sc83}. Now $\sE = f^*(\sF)$ gives an example we are looking for. }
\end{remark}

\begin{corollary} Assume in Theorem 6.1 that $m = 2$ and $n = 1$. 
Let $\Phi\colon T_{\sX/\Delta} \to  {{-K_{\sX}} \over {2}} $ be a morphism such that $\Phi \vert X_t = \phi_t$ is a contact morphism 
(i.e., defines a contact structure) for $t \ne 0.$ Suppose that 
\[
\phi_0\colon  T_{X_0} \to {{-K_{X_0}} \over {2}}
\]
does not vanish identically. 
Then the kernel $\sF_0$ of $\phi_0$ 
is integrable (in contrast to the maximally non-integrable bundle $\sF_t$).
\end{corollary}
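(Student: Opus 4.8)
The claim concerns the degenerate contact morphism $\phi_0\colon T_{X_0}\to -K_{X_0}/2$ arising as the limit of honest contact morphisms $\phi_t$ on $X_t\simeq A_t\times\bP_1$, under the hypothesis $\phi_0\not\equiv 0$. Here $\dim X_0 = 2m+n = 5$ (with $m=2$, $n=1$, so $\dim X_t = 5$; the contact dimension is $2\cdot 3 - 1 = 5$), and $\sF_0 = \ker\phi_0$ is a rank-$4$ subsheaf of $T_{X_0}$. The goal is to show $\sF_0$ is integrable, i.e.\ closed under the Lie bracket, in sharp contrast to the maximal non-integrability that defines the contact structures $\sF_t$ for $t\ne 0$. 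The natural strategy is to study the obstruction to integrability as a section of a suitable bundle and to show that this section, being a limit of the obstruction sections on the nearby fibers, is forced to vanish by a specialisation/semicontinuity argument combined with the special geometry of $X_0$ established in Corollary~\ref{corsplit}.

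\textbf{Key steps.}
First I would set up the integrability obstruction intrinsically. For a corank-$1$ distribution $\sF_0\subset T_{X_0}$ defined as the kernel of a map $\phi_0\colon T_{X_0}\to \sL$ (with $\sL = -K_{X_0}/2$), the Frobenius obstruction is the $\sO$-linear map
\[
\sigma_0\colon \textstyle\bigwedge^2 \sF_0 \to \sL,\qquad v\wedge w\mapsto \phi_0([v,w]),
\]
and $\sF_0$ is integrable precisely when $\sigma_0\equiv 0$. The crucial point is that $\sigma_0$ is $\sO$-bilinear (the bracket's failure to be tensorial cancels on $\ker\phi_0$), so it is a genuine section of $\sHom(\bigwedge^2\sF_0,\sL)$, i.e.\ of a vector bundle on $X_0$. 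Next I would assemble these into a relative object over $\Delta$: the family $\Phi\colon T_{\sX/\Delta}\to -K_{\sX}/2$ has a relative kernel, and away from $0$ the obstruction $\sigma_t$ is the nondegenerate contact form, a nowhere-vanishing section of the corresponding bundle $\wedge^2\sF_t^\vee\otimes\sL_t$. The heart of the argument is then to identify $\sigma_0$ with the restriction to $X_0$ of a section of a bundle on $\sX$ whose fibrewise pieces are the $\sigma_t$; where $\sF_0$ agrees with the limit of the $\sF_t$ as a subsheaf, semicontinuity and the flatness of the construction control $\sigma_0$.

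\textbf{The expected obstacle.}
The hard part is that $\phi_0$ \emph{degenerates}: its rank drops, so $\sF_0=\ker\phi_0$ may fail to be a subbundle along the locus where $\phi_0$ vanishes, and the limit of the $\sF_t$ need not coincide with $\sF_0$ as a sheaf there. Thus the naive ``$\sigma_0 = \lim \sigma_t$ and each $\sigma_t$ is nondegenerate'' does not immediately give integrability; rather, it is exactly the collapse that must force integrability. The right way through is to use the explicit structure from Corollary~\ref{corsplit}: $X_0 = \bP(\sE_0)$ with $\sE_0$ an unstable rank-$2$ bundle on the torus $A_0$ sitting in an extension by a destabilising sub-line-bundle $\sS$. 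I would exploit the fact that $-K_{X_0}/2$ pulls back from data on $A_0$ together with the tautological class, and that the foliation-type splitting $\alpha_0^*T_{A_0}\hookrightarrow T_{X_0}$ coming from the Albanese submersion $\alpha_0\colon X_0\to A_0$ gives an integrable subbundle against which $\sF_0$ can be compared. Concretely, $\sF_0$ will be shown to contain (or coincide generically with) the pullback foliation from $A_0$, whose leaves are the limits of the tori; integrability of that pullback distribution is automatic, and the degeneration of $\phi_0$ forces $\sF_0$ to align with it rather than remain the maximally non-integrable horizontal distribution. Converting this geometric picture into the vanishing of $\sigma_0$ as a section, using the classification of $\sE_0$ to pin down where $\phi_0$ can vanish, is the step I expect to require the most care.
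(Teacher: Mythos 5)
There is a genuine gap, and it starts with the numerics: with $m=2$ and $n=1$ the fibres are $X_t \simeq A_t \times \bP_1$ with $\dim A_t = 2$, so $\dim X_0 = m+n = 3$, not $5$, and $\sF_0 = \ker\phi_0$ is a rank-$2$ torsion-free subsheaf of $T_{X_0}$, not a rank-$4$ one. This is not a cosmetic slip: the whole point is that in rank $2$ the sheaf $(\bigwedge^2\sF_0)^{**} = \det\sF_0$ is a line bundle, so the Frobenius obstruction you correctly identify as $\sO$-linear becomes a map of line bundles $\mu\colon \det\sF_0 \to \sH_0$ (where $-K_{X_0} = 2\sH_0$). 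In your rank-$4$ picture the obstruction is a genuine tensor and none of the line-bundle arguments below would be available.

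Second, your proposal never closes the argument: you explicitly defer the decisive step (``converting this geometric picture into the vanishing of $\sigma_0$'' is left as the part requiring the most care), and the route you sketch has an internal problem. You want to compare $\sF_0$ with a pullback distribution $\alpha_0^*T_{A_0} \hookrightarrow T_{X_0}$, but in the degenerate case --- the only case at issue --- the tangent sequence of the Albanese map does not split: by Corollary 6.4, a splitting would force $X_0 \simeq A_0\times\bP_1$, so there is no such subsheaf of $T_{X_0}$ to compare with. The paper's proof needs neither Corollary~\ref{corsplit} nor any section-level limit of the obstructions $\sigma_t$; it uses only determinants. For $t\ne0$ the contact condition gives $\det\sF_t \simeq \sH_t$, whence in the limit $\det\sF_0 \simeq \sH_0 \otimes \sO_{X_0}(E)$ with $E$ an effective (possibly zero) divisor; consequently $\mu$ is a section of $\sO_{X_0}(-E)$. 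Now either $\mu$ is nowhere zero --- then $\phi_0$ defines a contact structure, so $X_0$ is a contact manifold and hence $X_0 \simeq A_0\times\bP_1$, the non-degenerate case which the corollary sets aside --- or $\mu$ has a zero, and then $\mu \equiv 0$, because a section of $\sO_{X_0}(-E)$ with $E$ effective is either nowhere vanishing (when $E=0$, being a constant) or identically zero. This dichotomy, available precisely because the obstruction lives in a line bundle, is the idea your proposal is missing; no structure theory of the unstable bundle $\sE_0$ enters at all.
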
 

\begin{proof} 

 We consider a family $(\phi_t)$ of morphisms
\[
\phi_t\colon T_{X_t} \to \sH_t
\]
such that $\phi_t$ is a contact form for $t  \ne 0$ and $-K_{X_t} = 2 \sH_t.$ 
Consider the (torsion free) kernel $\sF_0$ of $\phi_0.$  We need to show that the induced map 
\[
\mu\colon (\bigwedge^2 \sF_0)^{**} = \det \sF_0 \to \sH_0.
\]
vanishes. 
Since the determinant of the kernel $\sF_t$ of $\phi_t$ is isomorphic to $\sH_t,$ we conclude that 
\begin{equation}\label{detiso}
\det  \sF_0 \simeq  \sH_0 \otimes \sO_{X_0}(E) \tag{$*$}
\end{equation}
with an effective (possibly vanishing) divisor $E$ on $X_0.$ 
Now the induced map 
\[
\mu\colon \det \sF_0 \to \sH_0
\]
must have zeroes, otherwise $X_0$ would be a contact manifold, hence $X_0  \simeq A_0 \times \bP_1.$ 
Thus $\mu = 0$ by~\eqref{detiso}, and $\sF_0$ is integrable. 
\end{proof} 

\vfill \eject

%
%
\providecommand{\bysame}{\leavevmode\hbox to3em{\hrulefill}\thinspace}

\end{document}